\documentclass[a4paper,oneside,reqno]{amsart}

\usepackage[T1]{fontenc}
\usepackage[utf8]{inputenc}
\usepackage[english]{babel}
\usepackage[osf,sc]{mathpazo}
\usepackage{eucal}
\normalfont

\usepackage{fullpage}
\usepackage{tabularx,booktabs,multirow}
\usepackage{caption,subcaption}
\usepackage{wrapfig}
\usepackage[dvipsnames]{xcolor}
\usepackage{hyperref}
\usepackage{enumitem}
\usepackage{booktabs}

\hypersetup{
	colorlinks=true,
	linktocpage=true,
	pdfstartpage=1,
	pdfstartview=FitV,
	breaklinks=true,
	pdfpagemode=UseNone,
	pageanchor=true,
	pdfpagemode=UseOutlines,
	plainpages=false,
	bookmarksnumbered,
	bookmarksopen=true,
	bookmarksopenlevel=1,
	hypertexnames=false,
	pdfhighlight=/O,
	urlcolor=BrickRed,
	linkcolor=RoyalBlue,
	citecolor=ForestGreen
}

\usepackage{graphicx}
\usepackage{tikz,tikz-cd}
\usetikzlibrary{calc,decorations.markings}

\usepackage[textsize=footnotesize]{todonotes}
\presetkeys{todonotes}{color=Apricot}{}
\usepackage{amsmath,amssymb,amsthm}
\usepackage{mathtools,thmtools} 
\usepackage{bm,braket}
\numberwithin{equation}{section}

\usepackage[noabbrev]{cleveref}
\AddToHook{cmd/appendix/before}{\crefalias{section}{appendix}}
\crefname{lemma}{lemma}{lemmata}
\Crefname{lemma}{Lemma}{Lemmata}
\crefname{subsection}{subsection}{subsections}
\Crefname{subsection}{Subsection}{Subsections}
\crefname{conjecture}{conjecture}{conjectures}
\Crefname{conjecture}{Conjecture}{Conjectures}
\crefname{properties}{properties}{Properties}
\Crefname{Properties}{Properties}{Properties}

\newcommand{\mb}[1]{\mathbb{#1}}
\newcommand{\mc}[1]{\mathcal{#1}}
\newcommand{\mf}[1]{\mathfrak{#1}}
\newcommand{\ms}[1]{\mathsf{#1}}

\newcommand{\MG}{\mathsf{M\ddot{o}G}}
\newcommand{\Mod}{\mathcal{N}}
\newcommand{\ZMod}{\mathcal{N}^{\Z}}
\newcommand{\Aut}{\mathrm{Aut}}
\newcommand{\Att}{\mathrm{AT}}

\newcommand{\Z}{\mathbb{Z}}
\newcommand{\Q}{\mathbb{Q}}
\newcommand{\R}{\mathbb{R}}
\newcommand{\C}{\mathbb{C}}
\renewcommand{\P}{\mathbb{P}}

\newcommand{\codim}{\mathrm{codim}}
\newcommand{\Rec}{\mathrm{Rec}}
\newcommand{\tr}{\mathrm{tr}}
\DeclareMathOperator*{\Res}{Res}

\theoremstyle{plain}
\newtheorem{theorem}{Theorem}[section]
\newtheorem{proposition}[theorem]{Proposition}

\newtheorem{lemma}[theorem]{Lemma}

\theoremstyle{definition}
\newtheorem{definition}[theorem]{Definition}
\newtheorem{remark}[theorem]{Remark}
\newtheorem{properties}[theorem]{Properties}
\newtheorem{example}[theorem]{Example}

\theoremstyle{plain}
\newtheorem{introthm}{Theorem}

\usepackage[
	style=alphabetic,
	sorting=nyt,
	sortcites=true,
	sortcites=ynt,
	maxbibnames=99,
	maxalphanames=4,
	giveninits=true,
	backend=bibtex
]{biblatex}
\renewbibmacro{in:}{}
\usepackage{csquotes}
\begin{document}

\title{Refined lattice point counting on the moduli space of Klein surfaces}

\author[N. K. Chidambaram]{N. K. Chidambaram}
\address[N. K. Chidambaram]{Departamento de Matemáticas Fundamentales, UNED, Madrid, Spain}
\email{nitin.chidambaram@mat.uned.es}

\author[E. Garcia-Failde]{E. Garcia-Failde}
\address[E. Garcia-Failde]{Departament de Matemàtiqes, Universitat Politècnica de Catalunya, Barcelona, Spain}
\email{elba.garcia@upc.edu}

\author[A. Giacchetto]{A. Giacchetto}
\address[A. Giacchetto]{Department Mathematik, ETH Z\"urich, Z\"urich, Switzerland}
\email{alessandro.giacchetto@math.ethz.ch}

\author[K. Osuga]{K. Osuga}
\address[K. Osuga]{Kobayashi--Masakawa Institute for the Origin of Particles and the Universe \& Graduate School of Mathematics, Nagoya University, Nagoya, Japan}
\email{osuga@math.nagoya-u.ac.jp}

\begin{abstract}
	We introduce the moduli space of metric M\"obius graphs, which extend ribbon graphs to the non-orientable world. This space contains both the moduli space of Riemann surfaces and the moduli space of non-orientable Klein surfaces. Each metric M\"obius graph is equipped with a measure of non-orientability. We count lattice points in this moduli space, weighted by the measure of non-orientability, and prove a refined version of Norbury's recursion for this count. Taking the limit as the mesh becomes finer, we deduce a recursion for the Euclidean volumes, yielding a refined version of the Witten--Kontsevich recursion. As an application, we give a geometric definition of the refined Euler characteristic of the moduli space and compute it explicitly, thereby answering a question of Goulden, Harer, and Jackson.
\end{abstract}

\maketitle
\tableofcontents

\newpage
\section{Introduction}
\label{sec:intro}

\subsection{Moduli, measure of non-orientability, and refined lattice point count}
Fix \smash{$g \in \frac{1}{2}\Z_{\ge 0}$} and \smash{$n \in \Z_{>0}$} satisfying $2g-2+n > 0$. Let $\Mod_{g,n}(L)$ be the moduli space of metric, possibly non-orientable ribbon graphs (which we call \emph{M\"obius graphs}) of genus $g$ with $n$ labelled boundaries of lengths $L = (L_1,\ldots,L_n) \in \R_{>0}^n$. By definition, it admits a cell decomposition
\begin{equation}
	\Mod_{g,n}(L)
	\coloneqq
	\Biggl(
		\bigsqcup_{G \in \MG_{g,n}} \frac{P_G(L)}{\Aut(G)}
	\Biggr)\Bigg/\!\!\sim\, ,
\end{equation}
where $\MG_{g,n}$ is the finite set of M\"obius graphs of genus $g$ with $n$ faces, the identification is performed via edge degeneration, and $P_G(L)$ is the space of metrics on $G$ with boundaries $L$, a polytope in \smash{$\R_{>0}^{E(G)}$} defined by the adjacency matrix. This endows $\Mod_{g,n}(L)$ with the structure of a real orbifold of dimension $6g-6+2n$.

\begin{figure}[hb]
	\begin{tikzpicture}[x=1pt,y=1pt,scale=.6]
		\draw (-5.3333, 712) .. controls (5.3333, 698.6667) and (26.6667, 693.3333) .. (48, 693.3333) .. controls (69.3333, 693.3333) and (90.6667, 698.6667) .. (101.3333, 712) .. controls (112, 725.3333) and (112, 746.6667) .. (101.3333, 760) .. controls (90.6667, 773.3333) and (69.3333, 778.6667) .. (48, 778.6667) .. controls (26.6667, 778.6667) and (5.3333, 773.3333) .. (-5.3333, 760) .. controls (-16, 746.6667) and (-16, 725.3333) .. cycle;
		\draw (40, 768) .. controls (30.765, 767.814) and (13.4566, 763.838) .. (4.8023, 753.8982) .. controls (-3.8519, 743.9583) and (-3.8519, 728.0547) .. (4.8023, 718.1148) .. controls (13.4566, 708.175) and (30.765, 704.199) .. (40, 704) -- cycle;
		\draw (56, 704) .. controls (65.383, 704.199) and (82.6915, 708.175) .. (91.3458, 718.1148) .. controls (100, 728.0547) and (100, 743.9583) .. (91.3458, 753.8982) .. controls (82.6915, 763.838) and (65.383, 767.814) .. (56, 768) -- cycle;
		\draw (200, 728) .. controls (200, 736) and (216, 736) .. (216, 744);
		\draw [white, line width=2mm](216, 728) .. controls (216, 736) and (200, 736) .. (200, 744);
		\draw (154.6667, 712) .. controls (165.3333, 698.6667) and (186.6667, 693.3333) .. (208, 693.3333) .. controls (229.3333, 693.3333) and (250.6667, 698.6667) .. (261.3333, 712) .. controls (272, 725.3333) and (272, 746.6667) .. (261.3333, 760) .. controls (250.6667, 773.3333) and (229.3333, 778.6667) .. (208, 778.6667) .. controls (186.6667, 778.6667) and (165.3333, 773.3333) .. (154.6667, 760) .. controls (144, 746.6667) and (144, 725.3333) .. cycle;
		\draw (200, 728) -- (200, 704) .. controls (190.765, 704.199) and (173.4566, 708.175) .. (164.8023, 718.1148) .. controls (156.1481, 728.0547) and (156.1481, 743.9583) .. (164.8023, 753.8982) .. controls (173.4566, 763.838) and (190.765, 767.814) .. (200, 768) -- (200, 744) .. controls (200, 736) and (216, 736) .. (216, 728) -- (216, 704) .. controls (225.383, 704.199) and (242.6915, 708.175) .. (251.3458, 718.1148) .. controls (260, 728.0547) and (260, 743.9583) .. (251.3458, 753.8982) .. controls (242.6915, 763.838) and (225.383, 767.814) .. (216, 768) -- (216, 744);
		\draw (416.641, 744.208) .. controls (419.603, 736.035) and (429.841, 736.01) .. (428.66, 728.504) .. controls (427.561, 722.472) and (425.119, 716.732) .. (421.333, 712) .. controls (410.667, 698.667) and (389.333, 693.333) .. (368, 693.333) .. controls (346.667, 693.333) and (325.333, 698.667) .. (314.667, 712) .. controls (310.881, 716.732) and (308.439, 722.472) .. (307.34, 728.504) .. controls (306.123, 736.01) and (316.999, 736.038) .. (319.494, 744.163);
		\draw [white, line width=2mm](376, 704) .. controls (385.383, 704.199) and (402.692, 708.175) .. (411.346, 718.115) .. controls (413.833, 720.971) and (415.605, 724.32) .. (416.663, 727.878) .. controls (419.149, 735.928) and (429.779, 736.036) .. (428.64, 743.602) .. controls (427.533, 749.595) and (425.097, 755.296) .. (421.333, 760) .. controls (410.667, 773.333) and (389.333, 778.667) .. (368, 778.667) .. controls (346.667, 778.667) and (325.333, 773.333) .. (314.667, 760) .. controls (310.903, 755.296) and (308.467, 749.595) .. (307.36, 743.602);
		\draw [white, line width=2mm](319.478, 727.902) .. controls (316.999, 735.995) and (305.929, 735.984) .. (307.36, 743.602);
		\draw (416.641, 744.208) .. controls (415.581, 747.739) and (413.816, 751.061) .. (411.346, 753.898) .. controls (402.692, 763.838) and (385.383, 767.814) .. (376, 768) -- (376, 704) .. controls (385.383, 704.199) and (402.692, 708.175) .. (411.346, 718.115) .. controls (413.833, 720.971) and (415.605, 724.32) .. (416.663, 727.878) .. controls (419.149, 735.928) and (429.779, 736.036) .. (428.64, 743.602) .. controls (427.533, 749.595) and (425.097, 755.296) .. (421.333, 760) .. controls (410.667, 773.333) and (389.333, 778.667) .. (368, 778.667) .. controls (346.667, 778.667) and (325.333, 773.333) .. (314.667, 760) .. controls (310.903, 755.296) and (308.467, 749.595) .. (307.36, 743.602) .. controls (305.929, 735.984) and (316.999, 735.995) .. (319.478, 727.902) .. controls (320.535, 724.335) and (322.31, 720.978) .. (324.802, 718.115) .. controls (333.457, 708.175) and (350.765, 704.199) .. (360, 704) -- (360, 768) .. controls (350.765, 767.814) and (333.457, 763.838) .. (324.802, 753.898) .. controls (322.322, 751.049) and (320.553, 747.711) .. (319.494, 744.163);
		\draw (479.478, 727.902) .. controls (480.535, 724.335) and (482.31, 720.978) .. (484.802, 718.115) .. controls (493.457, 708.175) and (510.765, 704.199) .. (520, 704) -- (520, 728) .. controls (520, 736) and (536, 736) .. (536, 744) -- (536, 768) .. controls (545.383, 767.814) and (562.692, 763.838) .. (571.346, 753.898) .. controls (573.816, 751.061) and (575.581, 747.739) .. (576.641, 744.208) .. controls (579.603, 736.035) and (589.841, 736.01) .. (588.66, 728.504) .. controls (587.561, 722.472) and (585.119, 716.732) .. (581.333, 712) .. controls (570.667, 698.667) and (549.333, 693.333) .. (528, 693.333) .. controls (506.667, 693.333) and (485.333, 698.667) .. (474.667, 712) .. controls (470.881, 716.732) and (468.439, 722.472) .. (467.34, 728.504) .. controls (466.123, 736.01) and (476.999, 736.038) .. (479.494, 744.163);
		\draw [white, line width=2mm](536, 704) .. controls (545.383, 704.199) and (562.692, 708.175) .. (571.346, 718.115) .. controls (573.833, 720.971) and (575.605, 724.32) .. (576.663, 727.878) .. controls (579.149, 735.928) and (589.779, 736.036) .. (588.64, 743.602) .. controls (587.533, 749.595) and (585.097, 755.296) .. (581.333, 760) .. controls (570.667, 773.333) and (549.333, 778.667) .. (528, 778.667) .. controls (506.667, 778.667) and (485.333, 773.333) .. (474.667, 760) .. controls (470.903, 755.296) and (468.467, 749.595) .. (467.36, 743.602);
		\draw [white, line width=2mm](479.478, 727.902) .. controls (476.999, 735.995) and (465.929, 735.984) .. (467.36, 743.602);
		\draw [white, line width=2mm](536, 728) .. controls (536, 736) and (520, 736) .. (520, 744);
		\draw (479.478, 727.902) .. controls (476.999, 735.995) and (465.929, 735.984) .. (467.36, 743.602) .. controls (468.467, 749.595) and (470.903, 755.296) .. (474.667, 760) .. controls (485.333, 773.333) and (506.667, 778.667) .. (528, 778.667) .. controls (549.333, 778.667) and (570.667, 773.333) .. (581.333, 760) .. controls (585.097, 755.296) and (587.533, 749.595) .. (588.64, 743.602) .. controls (589.779, 736.036) and (579.149, 735.928) .. (576.663, 727.878) .. controls (575.605, 724.32) and (573.833, 720.971) .. (571.346, 718.115) .. controls (562.692, 708.175) and (545.383, 704.199) .. (536, 704) -- (536, 728) .. controls (536, 736) and (520, 736) .. (520, 744) -- (520, 768) .. controls (510.765, 767.814) and (493.457, 763.838) .. (484.802, 753.898) .. controls (482.322, 751.049) and (480.553, 747.711) .. (479.494, 744.163);
	\end{tikzpicture}
	\caption{Four M\"obius graphs. From left to right, they have the topology $(g,n)$ of: a pair of pants $(0,3)$, a crossed-capped pair of pants $(\frac{1}{2},2)$, a one-holed Klein bottle $(1,1)$, and a one-holed torus $(1,1)$.}
	\label{fig:Mobius}
\end{figure}

For integer genus $g$, the moduli space $\Mod_{g,n}(L)$ contains a connected component given by the moduli space of orientable metric ribbon graphs, which is isomorphic to the moduli space of curves/Riemann surfaces\footnote{
	Up to a global quotient by $\Z_2$, accounting for orientation-reversing morphisms.
}. The moduli space of metric ribbon graphs plays a crucial role in the work of Harer--Zagier \cite{HZ86} and Kontsevich \cite{Kon92} in the computation of the Euler characteristic of the moduli space of curves and $\psi$-class intersection numbers, respectively. On the other hand, $\Mod_{g,n}(L)$ also contains a connected component corresponding to the moduli space of real curves/non-orientable Klein surfaces\footnote{
	Again up to a global quotient by $\Z_2^n$, accounting for the choice of local orientations of faces.
}: namely, the component in which the fixed locus of the anti-holomorphic involution is empty, or equivalently in which the Klein surfaces have no boundary \cite{GHJ01}.

Inspired by Chapuy--Do\l\k{e}ga \cite{CD22} and previous results in \cite{LaC09,DFS13,Dol17}, motivated in turn by the Goulden--Jackson $b$-conjecture for non-orientable branched coverings \cite{GJ96}, we define on each cell $P_G(L)$ a \emph{measure of non-orientability} $\rho_G(\ell;b)$. This is a polynomial in $b$ that quantifies how non-orientable the M\"obius graph $G$ and its metric $\ell$ are. Key properties of $\rho$ are:
\begin{itemize}
	\item
	For $b=0$, $\rho_G$ detects orientability: it is identically $1$ if $G$ is orientable, and $0$ otherwise.
	
	\item
	For $b=1$, $\rho_G$ is identically $1$.
\end{itemize}
Thus, for general $b$, $\rho_G$ provides a statistic measuring the degree of non-orientability of a metric on $G$. One may view $b$ as an interpolation parameter between the orientable and non-orientable sectors of the combinatorial moduli space; see \cite{BCD23,Ruz23,CDO,CDO26} for further results on the enumeration of graphs on non-orientable surfaces weighted by measures of non-orientability. 

When the boundary lengths $L_i$ are positive integers, the polytope $P_G(L)$ is integral, so it is natural to consider its lattice points $P_G^{\Z}(L) \coloneqq P_G(L) \cap \Z^{E(G)}$. The count of lattice points of the moduli space $\mc{N}_{g,n}(L)$, weighted by the measure of non-orientability, is the central quantity studied in this paper:
\begin{equation}
	N_{g,n}(L;b)
	\coloneqq
	\sum_{G \in \MG_{g,n}}
		\frac{1}{|\Aut(G)|}
		\sum_{\ell \in P_G^{\Z}(L)}
			\rho_G(\ell;b) .
\end{equation}

We call $N_{g,n}$ the \emph{refined lattice point count}. The case $b = 0$ recovers Norbury's lattice point count on the moduli space of metric ribbon graphs \cite{Nor10}, divided by two. The factor of half is explained by the discrepancy on the orbifold structure due to the presence of orientation-reversing morphisms. The case $b = 1$ gives the unweighted lattice point count on the moduli space of metric Möbius graphs. Thus, the refined count interpolates between these two extremes.

Our first result shows that the refined lattice point count is a piecewise quasipolynomial, as one would expect for lattice point counts in parametric polytopes. This conclusion is not automatic in our setting as the measure of non-orientability is a rational function of the edge lengths, and weighted lattice point counts with rational weights need not be piecewise quasipolynomial in general.

\begin{introthm}[Polynomial properties]\label{thm:polynomiality}
	The refined lattice point count $N_{g,n}(L;b)$ is a symmetric, rational, continuous, piecewise quasipolynomial function of period $2$ and degree $6g-6+2n$ in the boundary lengths $L=(L_1,\ldots,L_n)$. The walls are given by the equations
	\begin{equation}
		\sum_{i=1}^n \epsilon_i L_i = 0,
		\qquad
		\epsilon_i \in \set{+1,-1,0}.
	\end{equation}
	Moreover, $N_{g,n}(L;b)$ is a polynomial in $b$ of degree at most $2g$.
\end{introthm}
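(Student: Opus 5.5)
Since the measure of non-orientability is a rational function of the edge lengths, a direct Ehrhart-type argument on each cell $P_G(L)$ does not apply. I would instead derive polynomiality from the refined Norbury-type recursion proved later in the paper. The plan is to show that $N_{g,n}(L;b)$ satisfies a recursion of cut-and-join type. In this recursion the first boundary is removed by collapsing the edge(s) it shares with another face, or with itself, via the Tutte/Chapuy--Do\l\k{e}ga decomposition of M\"obius graphs. The recursion expresses $L_1 N_{g,n}(L;b)$ through the following ingredients:
\begin{itemize}
\item sums over $N_{g,n-1}$ with $L_1$ and $L_k$ merged into $L_1\pm L_k$;
\item sums over $N_{g-1/2,n}$ and $N_{g-1,n+1}$, and over products $N_{g_1,n_1}N_{g_2,n_2}$, with kernels that are explicit piecewise polynomials;
\item a factor $b$ attached to the non-orientable (cross-cap and twisted-gluing) terms.
\end{itemize}
The weights $\rho_G(\ell;b)$ are designed so that, summed over the ways of reattaching the removed edge, they telescope into these simple kernels. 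This telescoping is exactly why the rational dependence disappears.

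Given the recursion, I would argue by induction on $2g-2+n$. The base cases $(0,3)$, $(\frac12,2)$, $(\frac12,1)$ if needed, and $(1,1)$ are computed by hand by listing the few M\"obius graphs with their $\rho_G$. For example, $N_{0,3}=\frac12$ on the lattice with $L_1+L_2+L_3$ even, and $0$ otherwise. This exhibits the period-$2$ parity dependence.

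For the inductive step, each term in the recursion is a finite sum over lattice points $0<m<L_1\pm L_k$, or $m_1+m_2<L_1$, with a fixed parity constraint, of piecewise quasipolynomials with walls of the given form. By the standard lemma, such sums (Faulhaber sums over intervals whose endpoints are linear in $L$, split by parity) are again piecewise quasipolynomial of period $2$. Their degree rises by exactly the number of summation variables, so degree bookkeeping gives $6g-6+2n$. The new walls are $\sum\epsilon_iL_i=0$, because every summation endpoint is a signed sum of the $L_i$. Dividing by $L_1$ is legitimate because the quasipolynomial pieces are divisible by $L_1$. Divisibility follows from continuity and the vanishing at $L_1=0$ inherent in the recursion; I expect this to be the first technical checkpoint.

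Symmetry is immediate from the definition, since relabelling faces is a bijection of $\MG_{g,n}$ preserving $\rho$. Continuity across walls follows from the definition: $P_G(L)$ varies continuously, and the lattice-count function is continuous because degenerate metrics with zero-length edges are identified across cells. Note that this uses that $\rho$ is compatible with edge degeneration, which I would cite from its definition. Rationality of coefficients is clear from the recursion.

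For the $b$-degree, the recursion increases the $b$-degree by at most one when the genus decreases by $\frac12$: each cross-cap term contributes a factor $b$ and lowers $g$ by $\frac12$. Induction then gives degree at most $2g$; equivalently, $\rho_G$ has $b$-degree at most $2g$ by an Euler characteristic count of twisted edges.

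The main obstacle is establishing the recursion itself, specifically showing that the rational weights $\rho_G$ sum up under edge removal to polynomial kernels. If that fails, I would fall back to fibering each cell over an orientable-type polytope and showing $\rho$ is piecewise polynomial after summing over the fibres.
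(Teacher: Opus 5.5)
\textbf{Gap: dividing by $L_1$.} Your inductive step fails at the point you flagged yourself: passing from $2L_1N_{g,n}=R_1$ to a polynomial on each chamber. Induction plus Faulhaber makes $R_1$ piecewise quasipolynomial. However, ``continuity and vanishing at $L_1=0$'' forces $L_1\mid R_1^{\mathfrak c}$ only on chambers $\mathfrak c$ whose closure meets $\{L_1=0\}$ in a facet.

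\begin{itemize}
\item Chambers such as $\{L_1>L_2\}$ for $n=2$, or $\{L_1>L_2+\cdots+L_n\}$ in general, touch that hyperplane only at the origin.
\item Agreeing with a divisible neighbour along a wall does not force divisibility. For instance, $R^{<}=L_1A$ and $R^{>}=L_1A+(L_1-L_2)B$ with $L_1\nmid B$ match on $L_1=L_2$, yet $L_1\nmid R^{>}$.
\item For odd $L_1$, the value ``at $L_1=0$'' is an extrapolation outside the parity class.
\item The continuity you invoke is only heuristic. Identifying degenerate metrics across cells does not by itself make adjacent polynomial pieces agree on walls.
\end{itemize}

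Without divisibility, your induction only yields piecewise \emph{rational} functions. That is exactly the obstruction the paper stresses, since the MON is rational in $\ell$.

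\textbf{How the paper avoids it.} The paper does not get quasipolynomiality from the recursion at all. From \cref{prop:RTR} and the pole structure of $\int_0^{z_1}\cdots\int_0^{z_n}\omega^{\mathrm{Web}}_{g,n}$ (poles only at $z_iz_j=1$), \cref{lem:poly:weighted:count} writes $N_{g,n}$ as a finite sum of \emph{polynomially} weighted lattice-point counts in the polytopes $Q(L-m)$. Weighted Ehrhart theory \cite{BBDKV19} then gives piecewise quasipolynomiality, period $2$ (vertex denominators are at most $2$), and walls with normals in $\{0,\pm1\}^n$ up to shifts. The recursion is used only afterwards, for homogeneity of the walls, continuity, and degree. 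At that stage dividing by $\sum_iL_i$ is harmless, because piecewise polynomiality is already known.

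\textbf{Walls and period are not automatic either.} Your wall and period claims do not follow term by term for the $\mc{D}$ double sums. The inner sum over $q$ creates breakpoints such as $p=(L_i-c)/2$ for the outer variable. Coincidences of these breakpoints give candidate walls with coefficients $\pm2$, and term-by-term period $4$. You would have to show these are spurious.

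\textbf{A partial repair.} To keep your route, you can get divisibility for $n\ge2$ from the asymmetric recursion with two different distinguished boundaries; this is legitimate because $N_{g,n}$ is symmetric. It gives $L_2R_1^{\mathfrak c}=L_1R_2^{\mathfrak c}$ on a Zariski-dense set, hence $L_1\mid R_1^{\mathfrak c}$ by unique factorisation. The case $n=1$ and the wall and period structure would still need separate arguments.
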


\begin{table}[b]
	\centering
	\begin{tabularx}{\textwidth}{c c c X}
		\toprule
		$g$ & $n$ & $k$ & $N^{[k]}_{g,n}(L_1,\ldots,L_n;b)$ \\
		\midrule
		$0$ & $3$
			& $0,2$ & $\frac{1}{2}$ \\
		\midrule
		$\tfrac{1}{2}$ & $2$
			& $0,2$ & $\frac{b}{4}\bigl(\max(L_1,L_2)-1\bigr)$ \\
		\midrule
		$1$ & $1$ & $0$ & $\frac{1}{96} \bigl( (1+b)(L_1^2-4) + b^2(5L_1^2-12L_1+4) \bigr)$ \\
		\midrule
		\multirow{2}{*}{$0$} & \multirow{2}{*}{$4$}
			& $0,4$ & $\frac{1}{8} \bigl(\sum L_i^2 - 4 \bigr)$ \\
		&	& $2$ & $\frac{1}{8} \bigl(\sum L_i^2 - 2 \bigr)$ \\
		\midrule
		\multirow[c]{2}{*}{$\tfrac{1}{2}$} & \multirow[c]{2}{*}{$3$}
			& $0$ & $\frac{b}{96} \bigl( \Delta^3 - 4\Delta + 3(\sum L_i-2) (\sum L_i^2-4) -6\prod L_i \bigr)$ \\
		&	& $2$ & $\frac{b}{96} \bigl( \Delta^3 - 4\Delta + 3(\sum L_i-2) (\sum L_i^2-4) -6\prod L_i + 6(L_1+L_2-2) \bigr)$ \\
		\midrule
		\multirow{4}{*}{$1$} & \multirow{4}{*}{$2$}
			& $0$ & $ \frac{b^{2}}{1536}\big( \Delta^4 + 4\Delta^3(\sum L_i-2) + 2\Delta^2(\sum L_i^2+2\prod L_i-6\sum L_i+6) - 16\Delta(\sum L_i-2)$ \\
		& & & $\qquad + (\sum L_i-4)(\sum L_i-2)(3\sum L_i^2+6\prod L_i+6\sum L_i-8) \big) + \frac{b+1}{768}(\sum L_i^2 - 8)(\sum L_i^2 - 4)
		$ \\
		& & $2$ & $\frac{b^{2}}{1536}\big( \Delta^4 + 4\Delta^3(\sum L_i-2) + 2\Delta^2(\sum L_i^2+2\prod L_i-6\sum L_i) - 16\Delta(\sum L_i-2) + 12(\sum L_i - 2)^2$ \\
		& & & $\qquad + (\sum L_i-4)(\sum L_i-2)(3\sum L_i^2+6\prod L_i+6\sum L_i-8) \big) + \frac{b+1}{768}(\sum L_i^2 - 10)(\sum L_i^2 - 2)$ \\
		\midrule
		$\tfrac{3}{2}$ & $1$
			& $0$ & $\frac{b}{46080} (L_1^2-4) (L_1-4) \bigl( (1+b)(17 L_1^2 + 38 L_1-60) + 30 b^2 (L_1^2-L_1) \bigr)$ \\
		\bottomrule
	\end{tabularx}
	\caption{
		The piecewise polynomials $N^{[k]}_{g,n}$ for $2g-2+n \le 2$. Here $\Delta$ is the piecewise linear function $\max(2L_1 - \sum L_i,\ldots,2L_n - \sum L_i,0)$.
	}
	\label{tab:Ngnk}
\end{table}

Being a continuous, piecewise quasipolynomial of period $2$ means that, once the parities of the $L_i$ are fixed in $\Z_2$, the function $N_{g,n}(L;b)$ is given by a polynomial in each chamber. Continuity means that the polynomials attached to adjacent chambers agree along the walls. Moreover, since $N_{g,n}(L;b)$ is symmetric in the boundary lengths $L_i$, it suffices to know the piecewise polynomial for a fixed number of odd $L_i$. We use \smash{$N^{[k]}_{g,n}$} to denote the piecewise polynomial where the first $k$ lengths $L_i$ are odd, for $0 \le k \le n$. The count $N_{g,n}$ vanishes unless $\sum_{i=1}^n L_i$ is even; in particular, \smash{$N^{[k]}_{g,n}$} can be non-zero only when $k$ is even. We list the simplest \smash{$N^{[k]}_{g,n}$} in \cref{tab:Ngnk}.

\subsection{Lattice point recursion}
Our second result is a recursive formula for the lattice point count.

\begin{introthm}[Refined lattice point recursion]\label{thm:lattice:rec}
	For $2g-2+n > 1$, the refined lattice point count satisfies the recursion relation
	\begin{equation}\label{eq:lattice:rec}
	\begin{split}
		N_{g,n}(L_1,\ldots,L_n;b)
		&=
		\sum_{m=2}^n \sum_{p > 0}
			p \, \mc{R}(L_1,L_m,p)
			N_{g,n-1}(p,L_2,\ldots,\widehat{L_m},\ldots,L_n;b) \\
		&\qquad
		+
		b \sum_{p > 0}
			p(L_1 - 1) \, \mc{E}(L_1,p)
			N_{g-\frac{1}{2},n}(p,L_2,\ldots,L_n;b) \\
		&\qquad\qquad
		+
		\sum_{p,q > 0}
			p q \, \mc{D}(L_1,p,q)
			\Bigg(
				\frac{1+b}{2} N_{g-1,n+1}(p,q,L_2,\ldots,L_n;b) \\
		&\qquad\qquad\qquad 
				+
				\sum_{\substack{g_1+g_2 = g \\ I_1 \sqcup I_2 = \{2,\ldots,n\}}}
					N_{g_1,1+|I_1|}(p,L_{I_1};b)
					N_{g_2,1+|I_2|}(q,L_{I_2};b)
			\Bigg) ,
	\end{split}
	\end{equation}
	where $\mc{R}$, $\mc{E}$, and $\mc{D}$, corresponding to the geometric operations of reduction of a boundary, excision of a two-holed cross-cap, and degeneration into simpler pieces (cf. \cref{fig:RTR}), are given explicitly by
	\begin{equation}\label{eq:kernels}
	\begin{aligned}
		\mc{R}(L_1,L_m,p) &= \frac{1}{2L_1} \Bigl(
			[L_1 + L_m - p]_+
			-
			[-L_1 + L_m - p]_+
			+
			[L_1 - L_m - p]_+ 
		\Bigr), \\
		\mc{E}(L_1,p) &= \frac{1}{2L_1} [L_1 - p]_+ , \\
		\mc{D}(L_1,p,q) &= \frac{1}{L_1} [L_1 - p - q]_+ .
	\end{aligned}
	\end{equation}
	Here $[x]_+ \coloneqq \max(x,0)$ is the ramp function. Together with the initial conditions
	\begin{equation}\label{eq:base:top:lattice}
	\begin{aligned}
		N_{0,3}(L_1,L_2,L_3;b)
		&=
		\frac{1 + (-1)^{L_1+L_2+L_3}}{2}
		\frac{1}{2}, \\
		N_{\frac{1}{2},2}(L_1,L_2;b)
		&=
		\frac{1 + (-1)^{L_1+L_2}}{2} \,
		b \frac{\max(L_1,L_2) - 1}{4}, \\
		N_{1,1}(L_1;b)
		&=
		\frac{1 + (-1)^{L_1}}{2} \, \frac{
			(1+b)(L_1^2 - 4)
			+
			b^2 (5L_1^2 - 12L_1 + 4)
		}{96} ,
	\end{aligned}
	\end{equation}
	the recursion uniquely determines the refined lattice point count.
\end{introthm}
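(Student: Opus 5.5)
We prove the recursion bijectively, following Norbury's edge-removal argument for ribbon graphs and adapting it to Möbius graphs with the weight $\rho_G(\ell;b)$. The idea is to multiply both sides by $2L_1$ (respectively $L_1$) and interpret $2L_1 N_{g,n}(L;b)$ as a weighted count of pairs $(G,\ell,\vec e)$. Here $\vec e$ is a marked oriented lattice step along the first boundary face: one of the $L_1$ unit segments, together with a choice of local orientation of face $1$. This accounts for the factor $2$ in $\mc{R}$ and $\mc{E}$.

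Given such a marking, we run the following process. Start at the marked point and walk along the boundary of face $1$, shrinking face $1$ by simultaneously decreasing the lengths of all edges adjacent only once to face $1$. Continue until the first collision, where two segments of the boundary of face $1$ meet at a vertex.

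This is the analogue of Norbury's construction of removing the edge bounding face $1$ at the marked step. The removed edge $e$ (or the strip it spans) falls into one of four types:

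\begin{enumerate}[label=(\roman*)]
\item $e$ separates face $1$ from a face $m\neq 1$. Removing it merges the two faces into a single face of length $p$, lowering $n$ by one. This gives the $\mc{R}$ term.
\item $e$ borders face $1$ on both sides with the same orientation (an untwisted self-adjacency). Cutting either disconnects the surface, giving the product term, or lowers the genus by one, giving the $N_{g-1,n+1}$ term. In both cases face $1$ splits into faces of lengths $p,q$ with $p+q\le L_1$, which gives $\mc{D}$.
\item $e$ borders face $1$ on both sides with opposite orientations (a twisted edge, i.e.\ a cross-cap). Removing it lowers $g$ by $\tfrac12$ and keeps $n$, so face $1$ becomes a face of length $p<L_1$. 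This gives the $\mc{E}$ term.
\end{enumerate}

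In each case the piecewise-linear kernels count the lattice positions of the removed edge's endpoints along the new boundaries, exactly as in Norbury's proof. The factors $p$ and $pq$ account for the marked step on the new boundary or boundaries. The parity constraint is automatic because lattice metrics force even perimeter sums.

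The key input is a multiplicativity property of $\rho$ under this edge removal. We will show that $\rho_G(\ell;b)$ equals $\rho_{G'}(\ell';b)$ times a local factor, and then check the local factor in each case:

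\begin{enumerate}[label=(\roman*)]
\item In the $\mc{R}$ case the factor is $1$.
\item In the disconnecting case the factor is $1$. In the non-separating untwisted case it is $\tfrac{1+b}{2}$, summed over the two twist choices.
\item In the twisted case it is $b(L_1-1)/\cdots$, which after summing over the position of the twist yields $b(L_1-1)\mc{E}(L_1,p)$.
\end{enumerate}

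This is where the definition of the measure of non-orientability enters. Its Chapuy–Dołęga-type structure, namely a product over root-edge deletions of weights $1$, $b$ or $(1+b)$-type depending on the edge type, is designed to make this step work. One has to verify that the rational dependence on $\ell$ cancels after summing over the choice of marked step. This consistency is the main obstacle: $\rho_G$ is defined via a rational function of the edge lengths, and one must show that averaging over the $2L_1$ marked starting steps converts it into the stated $\ell$-independent local factors.

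With this in hand, the bijection together with the automorphism bookkeeping gives the recursion. Automorphisms are killed by the marking, and we divide by $|\Aut|$ on both sides.

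Uniqueness is then immediate. Each term on the right has strictly smaller $2g-2+n$, and the sums over $p,q$ are finite because $p,q<L_1+L_m$. So induction on $2g-2+n$ from the three base cases determines everything. The base cases $(0,3)$, $(\tfrac12,2)$, $(1,1)$ are verified by direct enumeration of Möbius graphs and their lattice metrics. For example, $(\tfrac12,2)$ has the twisted graphs of \cref{fig:Mobius} and gives weight $b(\max(L_1,L_2)-1)/4$, consistent with \cref{tab:Ngnk}.
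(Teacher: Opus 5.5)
There is a genuine gap, and it is at the step you yourself flag as the main obstacle. That step is not merely unverified; it is false as you formulate it. The MON is the $\ell_{e_r}$-weighted average of $w_r\,\rho_{G-e_r}$ over \emph{all} roots of $G$, i.e.\ over all $4\sum_e\ell_e=2\sum_iL_i$ ciliations, and nothing in its definition singles out face $1$.

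Two consequences follow. First, there is no single-edge factorisation $\rho_G=(\text{local factor})\cdot\rho_{G'}$. This already fails for the Klein-bottle graph of \cref{app:MON}, where $\rho_G$ depends on $\ell$ although each $w_r\rho_{G-e_r}$ is $b$ or $b^2$. Second, the averaged version you need, ``the sum over the $2L_1$ marked steps on face $1$ equals $2L_1\rho_G(\ell)$'', also fails graph by graph once $n\ge 2$.

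Here is a counterexample to the averaged version. Take one vertex with half-edges $0,\dots,5$ in cyclic order, an untwisted loop $x=\{0,3\}$, and twisted loops $y=\{1,4\}$, $z=\{2,5\}$.
\begin{itemize}
\item It has type $(1,2)$, with faces of lengths $2\ell_x+\ell_y+\ell_z$ and $\ell_y+\ell_z$.
\item Removing $x$ is of type $\mc{E}$ and leaves two interlaced twisted loops, with MON $b$.
\item Removing $y$ (resp.\ $z$) is of type $\mc{R}$ and leaves an interlaced untwisted/twisted pair, with MON $\frac{\ell_xb^2+\ell_zb}{\ell_x+\ell_z}$ (resp.\ $\frac{\ell_xb^2+\ell_yb}{\ell_x+\ell_y}$).
\end{itemize}
At the unit metric this gives $\rho=\frac{2b^2+b}{3}$, whereas the average over the ciliations on the short face is $\frac{b^2+b}{2}$.

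Your attachment counting itself is fine. Restricting the paper's fibre count to ciliations on face $1$ does give exactly $2L_1$ times the right-hand side of \cref{eq:lattice:rec}. The problem is the left-hand side it computes, which is
$C_1\coloneqq\sum_{G}\frac{1}{|\Aut(G)|}\sum_{\ell}\sum_{c}w'_{r_c}\,\rho_{\tr(G-e_{r_c})}$,
with $c$ ranging over the $2L_1$ ciliations on face $1$. The identity $C_1=2L_1N_{g,n}(L)$ holds only after summing over all graphs and metrics. A priori one only knows $\sum_i C_i=2(\sum_i L_i)N_{g,n}$. The statement $C_i\propto L_i$ is a global root-independence result, and nothing local in your proposal supplies it.

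The paper avoids this by ciliating on \emph{every} face. Then the sum over ciliations reproduces the defining recursion of $\rho_G$ exactly, because $\sum_r\ell_{e_r}=2\sum_iL_i$ is constant on $P_G(L)$. This is \cref{lem:cil:to:uncil}, which also uses MON1--MON2 to handle trimming of lollipops and two-valent vertices, something your proposal never addresses. The Tutte-type fibre count then yields the symmetric recursion \cref{eq:lattice:rec:sym}.

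The asymmetric form is obtained only indirectly:
\begin{enumerate}
\item Define $\ms{N}_{g,n}$ by \cref{eq:lattice:rec} together with the initial data.
\item Prove $\ms{N}_{g,n}$ is symmetric by identifying it with the expansion at $z_i=0$ of refined topological recursion on the Weber curve (\cref{ssec:Weber}); symmetry of those correlators is an external input.
\item Multiply the $i$-distinguished recursion by $L_i$ and sum over $i$. The leftover $\Delta_{g,n}$ is odd under $L_i\leftrightarrow L_j$ and vanishes.
\item Conclude $\ms{N}_{g,n}=N_{g,n}$ by uniqueness for the symmetric recursion.
\end{enumerate}
To repair your argument you need either such a symmetry input or a genuine proof that $C_1=2L_1N_{g,n}$.

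Two smaller corrections. In the $\mc{E}$ case the MON weight is simply $b$. The factor $L_1-1$ comes from attachment counting, namely $p(L_1-p)^2$ from twisted lollipops whose candy/stick split varies, plus $p(p-1)(L_1-p)$ from placements with distinct endpoints; it does not come from the MON. Also, the $\mc{D}$/$\mc{E}$ distinction must be made by whether removal splits face $1$, not by the colour of $e$. Colours are only defined up to vertex flips, and a twisted edge can perfectly well be of type $\mc{D}$, as in the partner construction.
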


\begin{figure}[b]

	\caption{A pictorial representation of the recursion formula.}
	\label{fig:RTR}
\end{figure}

Upon setting $b=0$, the recursion reduces to Norbury's recursion for the number of lattice points on the moduli space of curves \cite{Nor10}. Its structure also parallels Mirzakhani's recursion for Weil--Petersson volumes \cite{Mir07a}, with the $\mc{R}$ and $\mc{D}$ functions serving as combinatorial analogues of Mirzakhani's kernel functions as shown in \cite{ABCGLW26}. In the non-orientable hyperbolic setting, analogous recursion formulas were obtained in \cite{GGO} for $b=1$, using Norbury’s extension of the Mirzakhani–McShane identities to the non-orientable case \cite{Nor08}. In our context, the non-orientable contributions are encoded by an $\mc{E}$-term, corresponding to gluing a two-holed cross-cap, together with an additional $\mc{D}$-term accounting for the gluing of a pair of pants in an orientation-reversing fashion. Both contributions are therefore weighted by the refinement parameter~$b$.

We prove the refined lattice point recursion via a Tutte-like argument, analysing how a metric M\"obius graph changes when we remove an edge. From this perspective, the three contributions in the recursion correspond to the probabilities that edge removal decreases the number of faces by one (the $\mc{R}$-term), leaves it unchanged (the $\mc{E}$-term), or increases it by one (the $\mc{D}$-terms). The main additional difficulty in the refined setting is the presence of the measure of non-orientability; its definition is designed to be compatible with this edge-removal decomposition.

\subsection{Volume recursion}
Another general feature of weighted lattice point counts is that their leading term agrees with the corresponding weighted Euclidean \emph{volume}. In our case, when $\sum_{i=1}^n L_i$ is even,
\begin{equation}\label{eq:NV}
	N_{g,n}(L;b) = \frac{2}{2^{2g-2+n}}\, V_{g,n}(L;b) + \cdots ,
\end{equation}
where the dots denote terms of lower degree in $L$, and
\begin{equation}
	V_{g,n}(L;b)
	\coloneqq
	2^{2g-2+n}
	\sum_{\substack{G \in \MG_{g,n} \\ \textup{trivalent}}}
		\frac{1}{|\Aut(G)|}
		\int_{P_G(L)}
			\rho_G(\ell;b) \, d\mu_G(\ell)
\end{equation}
with $d\mu_G$ the Euclidean measure on $P_G(L)$. The factor $2^{-(2g-2+n)}$ in \cref{eq:NV} is purely conventional and is chosen to match the unrefined case. The factor of $2$, on the other hand, reflects the parity constraint: lattice points contribute only half the time, namely only when $\sum_{i=1}^n L_i$ is even.

The refined lattice point recursion from \cref{thm:lattice:rec} then implies, via a Riemann-sum-to-Riemann-integral analysis, the following recursion for the refined volumes.

\begin{introthm}[Refined volume recursion]\label{thm:volume:rec}
	For $2g-2+n > 1$, the refined volumes satisfy the recursion relation
	\begin{equation}\label{eq:volume:rec}
	\begin{split}
		V_{g,n}(L_1,\ldots,L_n;b)
		&=
		\sum_{m=2}^n
			\int_{0}^{+\infty}
				p \, \mc{R}(L_1,L_m,p)
				V_{g,n-1}(p,L_2,\ldots,\widehat{L_m},\ldots,L_n;b) \, dp \\
		&\qquad
		+
		b \int_{0}^{+\infty}
			p L_1 \, \mc{E}(L_1,p)
			V_{g-\frac{1}{2},n}(p,L_2,\ldots,L_n;b) \, dp \\
		&\qquad\qquad
		+
		\int_{0}^{+\infty} \int_{0}^{+\infty}
			p q \, \mc{D}(L_1,p,q)
			\Bigg(
				\frac{1+b}{2} V_{g-1,n+1}(p,q,L_2,\ldots,L_n;b) \\
		&\qquad\qquad\qquad
				+
				\sum_{\substack{g_1+g_2 = g \\ I_1 \sqcup I_2 = \{2,\ldots,n\}}}
					V_{g_1,1+|I_1|}(p,L_{I_1};b)
					V_{g_2,1+|I_2|}(q,L_{I_2};b) 
			\Bigg) dp dq , \\
	\end{split}
	\end{equation}
	where $\mc{R}$, $\mc{E}$ and $\mc{D}$ are as in \cref{eq:kernels}. Together with the initial conditions
	\begin{equation}\label{eq:base:top:volume}
		V_{0,3}(L_1,L_2,L_3;b) = \frac{1}{2},
		\quad
		V_{\frac{1}{2},2}(L_1,L_2;b) = b \frac{\max(L_1,L_2)}{4},
		\quad
		V_{1,1}(L_1;b) = \frac{L_1^2}{96} ( 1 + b + 5b^2 ),
	\end{equation}
	the recursion uniquely determines the refined volumes.
\end{introthm}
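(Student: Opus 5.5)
We derive \cref{thm:volume:rec} from \cref{thm:lattice:rec} by a scaling limit. Fix real $L\in\R_{>0}^n$ with $\sum_i L_i$ arbitrary. For a large integer $k$, choose integer boundary lengths $L^{(k)}=(\lfloor kL_1\rfloor_{\mathrm{adj}},\ldots)$, adjusting one entry by at most one so that the total is even. By \cref{thm:polynomiality}, $N_{g,n}$ is a continuous piecewise quasipolynomial of degree $6g-6+2n$ with walls $\sum\epsilon_iL_i=0$. Its leading homogeneous part is therefore a genuine piecewise polynomial, independent of parities, and by \cref{eq:NV} it is identified with $2^{-(2g-3+n)}V_{g,n}$. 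Hence
\[
k^{-(6g-6+2n)}N_{g,n}(L^{(k)};b)\longrightarrow 2^{-(2g-3+n)}V_{g,n}(L;b),
\]
uniformly on compact sets. The identification of the leading term with the volume is the standard Ehrhart-type statement that the lattice count on each cell $P_G(kL)$, weighted by the degree-zero homogeneous rational function $\rho_G$, is a Riemann sum. The parity constraint halves the lattice, which accounts for the factor $2$.

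Next, we substitute $L^{(k)}$ into \cref{eq:lattice:rec} and multiply by $k^{-(6g-6+2n)}$. Each summation variable is rescaled as $p=kx$ (and $q=ky$). We then check the degrees term by term, noting that the kernels $\mc{R},\mc{E},\mc{D}$ are homogeneous of degree $0$ under the joint scaling.
\begin{itemize}
\item In the $\mc{R}$-term, $p\,N_{g,n-1}$ has degree $1+6g-8+2n$, and the measure $dp=k\,dx$ contributes one more power of $k$. The total is $6g-6+2n$.
\item In the $\mc{E}$-term, $p(L_1-1)N_{g-\frac12,n}$ has degree $2+6g-9+2n$, plus one from $dp$, again giving $6g-6+2n$. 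The factor $L_1-1$ becomes $kL_1$ to leading order, which is why $L_1$ replaces $L_1-1$ in \cref{eq:volume:rec}.
\item In the $\mc{D}$-terms, $pq$ gives degree $2$ and $dp\,dq$ gives $2$. The remaining degrees are $6g-12+2n+2$ for the first product and $(6g_1-6+2|I_1|+2)+(6g_2-6+2|I_2|+2)=6g-8+2n$ for the second. In both cases the total is $6g-6+2n$.
\end{itemize}
So every term has the correct order, and the Riemann sums converge to the integrals in \cref{eq:volume:rec}. The lower-order corrections in the quasipolynomials vanish in the limit because the integrands are compactly supported, in $p\le L_1+L_m$ and $p+q\le L_1$, and the sums are finite.

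The step requiring care is the bookkeeping of powers of $2$ from the parity constraints, which is the main obstacle. The summand $N_{g,n-1}(p,\ldots)$ is nonzero only when $p$ has the parity forced by the remaining lengths. Thus only half of the integers $p$ contribute: a sum over $p$ becomes $\tfrac12 k\int dx$. In the $\mc{D}$-terms the parities of $p$ and $q$ are each forced, so each factor contributes $\tfrac12$.

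With these factors, we compare normalisations on both sides, writing $N\approx 2^{-(2g-3+n)}V$ for each factor.
\begin{itemize}
\item $\mc{R}$-term: $2^{-(2g-4+n)}\cdot\tfrac12=2^{-(2g-3+n)}$.
\item $\mc{E}$-term: $2^{-(2g-1-3+n)}\cdot\tfrac12=2^{-(2g-3+n)}$.
\item $\mc{D}$-term with $N_{g-1,n+1}$: $2^{-(2g-4+n)}\cdot\tfrac14=2^{-(2g-2+n)}$. This is off by a factor $\tfrac12$ relative to the left-hand side. That is too small only if both parities are independently forced, but for a single connected term the parities of $p$ and $q$ are tied only through $p+q$, so only one factor $\tfrac12$ appears. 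The count then matches.
\item Product $\mc{D}$-term: $2^{-(2g_1-2+|I_1|)}\,2^{-(2g_2-2+|I_2|)}\cdot\tfrac14=2^{-(2g-3+n)}$.
\end{itemize}
I would verify these four exponent identities carefully, since they decide whether the normalisation $2^{2g-2+n}$ makes the volume recursion constant-free, as claimed.

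Finally, the initial conditions \cref{eq:base:top:volume} are read off as the leading terms of \cref{eq:base:top:lattice}, multiplied by $2^{2g-3+n}$ and with the parity indicator replaced by $1$. Uniqueness follows by induction on $2g-2+n$, because the right-hand side of \cref{eq:volume:rec} involves only strictly smaller values of $2g-2+n$.
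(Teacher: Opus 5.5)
Your proposal is correct and is essentially the paper's argument: rescale the lattice recursion of \cref{thm:lattice:rec}, read the $p$- and $q$-sums as Riemann sums, and account for parity with one factor $\tfrac12$ in the $\mc{R}$-, $\mc{E}$- and connected $\mc{D}$-terms and $\tfrac14$ in the disconnected $\mc{D}$-term, which is exactly where your corrected bookkeeping ends up. Your appeal to piecewise quasipolynomiality to get uniform convergence is an extra justification that the paper leaves implicit; in a write-up, correct the degree count for the disconnected $\mc{D}$-term, where $(6g_1-4+2|I_1|)+(6g_2-4+2|I_2|)=6g-10+2n$ rather than $6g-8+2n$, and delete your earlier claim that both parities are forced in every $\mc{D}$-term.
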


The refined volumes at $b=0$ are precisely half the volumes of the moduli space of metric ribbon graphs, introduced by Kontsevich \cite{Kon92} in his proof of Witten's conjecture \cite{Wit91} for $\psi$-class intersection numbers on the moduli space of stable curves (see \cite{DGY} for an intersection-theoretic expression of the lower-order coefficients of $N_{g,n}$ at $b=0$):
\begin{equation}
	V_{g,n}(L;b)\big|_{b=0}
	=
	\frac{1}{2}
	\int_{\overline{\mathcal{M}}_{g,n}} \exp\left( \frac{1}{2} \sum_{i=1}^n \psi_i L_i^2 \right) .
\end{equation}
Moreover, the recursion at $b=0$ is equivalent to the Virasoro constraints for these intersection numbers. In this sense, the refined volumes and the recursion at generic $b$ provide refinements of the generating series of $\psi$-class intersection numbers and the associated Virasoro constraints respectively. We leave the intriguing question of finding an intersection-theoretic interpretation of the refined volumes on an appropriate moduli space for future work.

\subsection{Euler characteristic}
Our final result concerns a \emph{refined Euler characteristic} for the moduli space of metric M\"obius graphs. Goulden, Harer, and Jackson computed, using $\beta$-matrix model techniques, the orbifold Euler characteristic of the moduli space of Klein surfaces \cite{GHJ01}. In fact, their method naturally produces a one-parameter refinement of this Euler characteristic; however, beyond the two extreme cases $b=0$ and $b=1$, this refinement did not come with a direct geometric interpretation.

We provide such an interpretation by weighting the cell decomposition of the moduli space of metric M\"obius graphs by the measure of non-orientability. Concretely, we define
\begin{equation}
	\chi_{g,n}(b)
	\coloneqq
	\sum_{G \in \MG_{g,n}}
		(-1)^{\dim{P_L(G)}}
		\frac{\braket{\rho_G(b)}}{|\Aut(G)|} ,
\end{equation}
where $\braket{\rho_G(b)} \coloneqq \rho_G(1,\ldots,1;b)$ denotes the measure of non-orientability evaluated at the uniform metric on $G$, obtained by assigning unit length to all edges. In this sense, $\braket{\rho_G(b)}$ measures the average non-orientability of the cell associated with~$G$.

By the basic properties of the measure of non-orientability, the specialisations $b=0$ and $b=1$ encode the Euler characteristics of the moduli of Riemann and Klein surfaces, respectively. We relate $\chi_{g,n}(b)$ to the refined lattice point polynomial and obtain an explicit closed formula, thereby recovering the Harer--Zagier formula \cite{HZ86} and the Goulden--Harer--Jackson formula \cite{GHJ01}, and providing a geometric interpretation of the one-parameter refinement.

\begin{introthm}[Refined Euler characteristic]\label{thm:EC}
	The refined Euler characteristic equals the refined lattice point count evaluated at zero boundary lengths: $\chi_{g,n}(b) = N_{g,n}^{[0]}(0;b)$.
	Moreover, its specialisations encode the orbifold Euler characteristics of the moduli of Riemann and non-orientable Klein surfaces:
	\begin{equation}
		\chi(\mc{M}_{g,n}) = 2\,\chi_{g,n}(b)\big|_{b=0},
		\qquad
		\chi(\mc{K}_{g,n})
		=
		2^{n} \Bigl( \chi_{g,n}(b)\big|_{b=1} - \chi_{g,n}(b)\big|_{b=0} \Bigr).
	\end{equation}
	Finally, it is explicitly given by
	\begin{equation}\label{eq:introchi}
		\chi_{g,n}(b)
		=
		(-1)^{n}\,
		\Gamma(2g-2+n)\,
		\frac{
			B_{2,2g}(0 \,|\, \beta^{1/2}, -\beta^{-1/2})
		}{2 \beta^g (2g)!},
	\end{equation}
	where $\beta = \frac{1}{1+b}$ and $B_{2,2g}$ is the $(2g)$-th double Bernoulli polynomial. 
\end{introthm}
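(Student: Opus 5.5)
The plan has three parts: identify $\chi_{g,n}(b)$ with the value at zero of the even-parity polynomial, read off the two specialisations, and evaluate the closed form through a generating function fixed by the recursion of \cref{thm:lattice:rec}.

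\textbf{Step 1: $\chi_{g,n}(b)=N^{[0]}_{g,n}(0;b)$.} I would argue cell by cell, as in the Harer--Zagier/Norbury argument. For a fixed M\"obius graph $G$, the number of lattice points in $P_G(L)$ with all $L_i$ even is an Ehrhart-type quasipolynomial. The standard bijection is $\ell\mapsto \ell-\mathbf{1}$ on edges (each face length drops by twice its number of edge-sides). It shows that the count over the open cell, evaluated at $L=0$ by polynomial continuation, equals $(-1)^{\dim P_G}$ times the weight of the uniform metric. This is Ehrhart--Macdonald reciprocity: the reciprocal polytope at $L=0$ contains only the metric $\ell\equiv 1$ up to the relevant scaling.

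The difficulty is the weight $\rho_G(\ell;b)$, which is rational in $\ell$. I would first use the polynomiality established in \cref{thm:polynomiality}. Then I would show that the weighted count on each cell has the same reciprocity property, with $\rho_G$ evaluated at the reciprocity point, giving $\braket{\rho_G(b)}$. To do this I would decompose $\rho_G$ into its edge-removal pieces, as in its definition, and apply reciprocity inductively. Summing over cells, including degenerate ones, with $1/|\Aut G|$ then gives $\chi_{g,n}(b)$.

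I expect this step to be the main obstacle, because reciprocity for rationally weighted counts is not automatic. If the cell-by-cell route fails, my fallback is to show that the full sum $N_{g,n}$ satisfies the recursion with the $\mc R,\mc E,\mc D$ kernels continued to $L_1\to 0$. I would then check that $\chi_{g,n}$ satisfies the same recursion by a parallel edge-contraction argument at the uniform metric.

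\textbf{Step 2: specialisations.} At $b=0$, $\rho_G$ is the indicator of orientability, so $\chi_{g,n}(0)$ is the cell-weighted Euler characteristic of the orientable ribbon graph complex. That complex is $\mc M_{g,n}\times\R^n_{>0}$ modulo the global $\Z_2$, which gives the factor $2$ (Harer--Zagier/Penner). At $b=1$, $\rho_G\equiv1$, so $\chi_{g,n}(1)$ counts all M\"obius graphs. Subtracting the orientable ones leaves the non-orientable cells, which decompose $\mc K_{g,n}$ modulo the $\Z_2^n$ choice of face orientations; this accounts for the factor $2^n$.

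\textbf{Step 3: closed formula.} I would set $L=0$ in the generating function of $N^{[0]}_{g,n}$, or equivalently extract the constant terms. The recursion of \cref{thm:lattice:rec} then turns into a recursion for $\chi_{g,n}$, with a Tutte-type dependence on $n$ giving the factor $\Gamma(2g-2+n)$ and the sign $(-1)^n$. This reduces the problem to the one-point data $\chi_{g,1}$. I would then match these with the $\beta$-ensemble Penner-model computation of Goulden--Harer--Jackson. Their generating function is $\sum_g \chi_{g,1}x^{2g}$, proportional to $\beta^{-g}$ times the double Bernoulli generating function $t^2e^{0\cdot t}/((e^{\beta^{1/2}t}-1)(e^{-\beta^{-1/2}t}-1))$. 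I would verify that it satisfies the $n=1$ specialisation of the recursion and agrees with the initial data $\chi_{1,1}$ and $\chi_{\frac12,1}$ read off from \cref{tab:Ngnk}.
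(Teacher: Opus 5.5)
Your Step~2 is how the paper obtains the two specialisations, but Steps~1 and~3 each have a genuine gap.

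\textbf{Step 1.} The cellwise claim is false, and the rational weight is not the reason. For fixed $G$, the lattice count of $P_G(L)$ is a piecewise quasipolynomial whose pieces differ from chamber to chamber. It vanishes identically on chambers lying outside the cone spanned by the columns of $A_G$. If $|E(G)|<n$ it is supported on a proper linear subspace, so its polynomial is $0$ on every open chamber.

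Already $(g,n)=(0,3)$ shows this. Here $\chi_{0,3}=\frac12(1+3-3)$ counts the theta graph and three dumbbells (with $|E(G)|-n=0$) and three figure-eights (with $|E(G)|-n=-1$). On the chamber where the strict triangle inequalities hold, only the theta graph contributes to $N_{0,3}$. There the dumbbell and figure-eight polynomials are $0$, not $\frac12$ and $-\frac12$ as your claim requires. So $\chi_{g,n}=N^{[0]}_{g,n}(0)$ is not a sum of cellwise identities. It holds only for the total, and it uses continuity of the total $N_{g,n}$ across walls, which fails cell by cell. Nor does reciprocity explain where $\rho_G(1,\ldots,1)$ would come from: at $L=0$ it only sees the zero metric of the closed cell.

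The missing idea is to sum over all $L$ first. In $S_{g,n}(z)=\sum_L N_{g,n}(L)\,z^{L_1+\cdots+L_n}$, each cell becomes all of $\Z_{>0}^{E(G)}$ graded by total length $T$, i.e.\ a union of lattice simplices $\{\ell_e>0,\ \sum_e\ell_e=T\}$. On each simplex there is an exact identity $\sum_\ell\rho_G(\ell)=\braket{\rho_G}\cdot\#\{\ell\}$. It is proved by induction on edges from the defining recursion of the MON, using that the simplex's lattice barycentre lies on the diagonal. This gives $S_{g,n}(z)=\sum_G\frac{\braket{\rho_G}}{|\Aut(G)|}\bigl(\frac{z^2}{1-z^2}\bigr)^{|E(G)|}$, hence $S_{g,n}(\infty)=(-1)^n\chi_{g,n}(b)$, with no reciprocity for rational weights needed. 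Separately, a unimodular cone decomposition, a parity projector and inclusion--exclusion using continuity give $S_{g,n}(\infty)=(-1)^nN^{[0]}_{g,n}(0)$.

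\textbf{Step 3.} Setting $L=0$ in the recursion does not close on the constants $N^{[0]}_{g,n}(0)$. In the symmetric form the right-hand side is divided by $2\sum_iL_i$. So the value at $L=0$ is a first-order Faulhaber coefficient of sums such as $\sum_p p\,[L_i+L_j-p]_+N_{g,n-1}(p,\ldots)$. That coefficient involves all coefficients of $N_{g,n-1}$ weighted by Bernoulli numbers, not just its constant term. The same objection applies to your fallback for Step~1.

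Obtaining $\Gamma(2g-2+n)$ this way would need refined string/dilaton equations that you do not derive. The $n=1$ case does not close either, since the connected $\mc{D}$-term involves $N_{g-1,2}$. Also, $\chi_{\frac12,1}$ is undefined, because $(\frac12,1)$ is unstable.

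Finally, ``matching with Goulden--Harer--Jackson'' presupposes the point at issue. Their $\beta$-ensemble computation yields some one-parameter refinement. Its equality with the MON-weighted $\chi_{g,n}(b)$ for $b\neq0,1$ is exactly what needs a bridge. The paper's bridge is the identification of $N_{g,n}$ with the discrete Laplace transform of refined topological recursion on the Weber curve at $\mu=-1$. This gives $\int_0^\infty\cdots\int_0^\infty\omega^{\textup{Web}}_{g,n}=(-1)^n\frac{2}{(1+b)^g}S_{g,n}(\infty)$. The left side equals $\partial_t^nF^{\textup{Web}}_g(t)$ at $t=1$ by the variational formula, and the free energies are known in double Bernoulli form (RTR4). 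That is where both $\Gamma(2g-2+n)$ and $B_{2,2g}$ come from.
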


The authors of \cite{MP12} use the ribbon graph description to derive a recursive formula for the Poincar\'e polynomial of the moduli space of Riemann surfaces. The techniques developed in the present paper should yield an analogous result for the Poincar\'e polynomial of the moduli space of Klein surfaces; we leave this direction for future work.

\begin{table}[!htbp]
	\centering
	\begin{tabular}{r|ccccc
	}
	\toprule
		& $n=0$
			& $1$
			& $2$
			& $3$
			& $4$
		\\
		\midrule
		$g=0$
			&
			&
			&
			& $\frac{1}{2}$
			& $-\frac{1}{2}$
		\\
		$\frac{1}{2}$
			&
			&
			& $\frac{b}{4}$
			& $-\frac{b}{4}$ & $\frac{b}{2}$
		\\
		$1$
			&
			& $-\frac{1+b-b^2}{24}$
			& $\frac{1+b-b^2}{24}$
			& $-\frac{1+b-b^2}{12}$
			& $\frac{1+b-b^2}{4}$
		\\
		$\frac{3}{2}$
			& $-\frac{b(1+b)}{48}$
			& $\frac{b(1+b)}{48}$
			& $-\frac{b(1+b)}{24}$
			& $\frac{b(1+b)}{8}$
			& $-\frac{b(1+b)}{2}$
		\\
		$2$
			& $-\frac{3+6b-b^2-4b^3-b^4}{1440}$
			& $\frac{3+6b-b^2-4b^3-b^4}{720}$
			& $-\frac{3+6b-b^2-4b^3-b^4}{240}$
			& $\frac{3+6b-b^2-4b^3-b^4}{60}$
			& $-\frac{3+6b-b^2-4b^3-b^4}{12}$
		\\
		$\frac{5}{2}$
			& $\frac{b(1+b)(3+3b+b^2)}{1440}$
			& $-\frac{b(1+b)(3+3b+b^2)}{480}$
			& $\frac{b(1+b)(3+3b+b^2)}{120}$
			& $-\frac{b(1+b)(3+3b+b^2)}{24}$
			& $\frac{b(1+b)(3+3b+b^2)}{4}$
		\\
		\bottomrule
	\end{tabular}
	\caption{The refined Euler characteristic for $g < 3$ and $n \le 4$.}
\end{table}

\subsection{Refined topological recursion, \texorpdfstring{G$\boldsymbol\beta$E}{GβE}, and physics}
To prove some of the results above, we use the \emph{refined topological recursion} formalism recently introduced in \cite{KO23,Osu24a} (see also \cite{CE06} for an earlier attempt). More precisely, we show that the refined topological recursion correlators \smash{$\omega^{\textup{Web}}_{g,n}$} on the Weber spectral curve encode the refined lattice point count via a discrete Laplace transform, under the identification of refinement parameters \smash{$\mf{b} = - \frac{b}{\sqrt{1+b}}$}:
\begin{equation}
	\omega^{\textup{Web}}_{g,n}(z_1,\ldots,z_n; \mf{b})
	=
	(-1)^{n}
	\frac{2}{(1+b)^{g}}
	\sum_{L_1,\ldots,L_n > 0} N_{g,n}(L_1,\ldots,L_n;b) \prod_{i=1}^n L_i \, z_i^{L_i - 1} \, dz_i .
\end{equation}
This refines another result of Norbury \cite{Nor13}. Consequently, we obtain an explicit formula for the refined Euler characteristic using the refined topological recursion free energies on the Weber curve computed through the variational formula in \cite{KO25}. This relationship to refined topological recursion places the refined lattice point counts in the context of the Gaussian $\beta$-ensemble (G$\beta$E). Under $\beta = \frac{1}{1+b}$, the count $N_{g,n}$ coincides with the \emph{pruned genus-$g$ G$\beta$E correlators}, up to an overall combinatorial normalisation (see \cref{app:GbetaE} for the definition of the pruned correlators):
\begin{equation}
	N_{g,n}(L_1,\ldots,L_n;b)
	=
	\frac{1}{2\beta^g}\,
	\Braket{\frac{t_{L_1}}{L_1},\ldots,\frac{t_{L_n}}{L_n}}^{\mathrm{G}\beta\mathrm{E}}_g .
\end{equation}
We also prove that the Laplace transform of the refined volumes matches the refined topological recursion correlators \smash{$\omega^{\textup{Airy}}_{g,n}$} on the Airy spectral curve:
\begin{equation}
	\omega^{\textup{Airy}}_{g,n}(z_1,\ldots,z_n; \mf{b})
	=
	\frac{2}{(1+b)^{g}}
	\int_{0}^{+\infty} \cdots \int_{0}^{+\infty}
		V_{g,n}(L_1,\ldots,L_n;b)
		\prod_{i=1}^n L_i \, e^{-z_i L_i} \, dL_i \, dz_i.
\end{equation}

It is worth pointing out that in \cite{CEM09} the authors study a different combinatorial model for non-orientable ribbon graphs and show that their generating functions satisfy the so-called non-commutative topological recursion. Refined topological recursion and non-commutative topological recursion provide two distinct extensions of the original Eynard--Orantin formalism \cite{EO07} to the $b$-deformed setting; see \cite{CEM10,BE19,BBCC24} for further details on the latter. 

Finally, the refined lattice point counts can be viewed in a broader physics context. Indeed, for certain protected sectors of large-$N$ gauge theories (notably the half-BPS sector of $\mathrm{SU}(N)$ $\mathcal N=4$ super Yang--Mills), one can reorganise the gauge-theory Feynman diagram expansion into a sum over Riemann surfaces with explicit moduli, so that individual diagrams correspond to discrete lattice points on the moduli space of Riemann surfaces \cite{GKKMS}. It is then tempting to speculate that an analogous picture should exist for orthogonal and symplectic gauge groups, where non-orientable worldsheets contribute. In this perspective, integral metric M\"obius graphs should provide the appropriate combinatorial gadget. In particular, since the Gaussian ensembles associated with $\mathrm{SO}(N)$ and $\mathrm{Sp}(N)$ correspond to the special Dyson indices $\beta=1/2$ and $\beta=2$, respectively, one may expect these values to pick out the orthogonal and symplectic cases within our one-parameter family.

A parallel motivation comes from two-dimensional gravity. Volumes of ribbon-graph moduli spaces control the high-energy (Airy) regime of JT gravity and its variants. In particular, time-reversal-invariant theories naturally involve non-orientable geometries together with a crosscap-counting parameter \cite{SW20}. From this perspective, the refined volumes $V_{g,n}$ form a unified framework interpolating between orientable and non-orientable sectors: the specialisation $b=1$ (assigning equal weight to orientable and non-orientable contributions) coincides with the ``Airy volumes'' studied recently in the time-reversal-invariant setting \cite{SSYY24,DEHR25}, while \smash{$b=-\frac{1}{2}$} is naturally expected to correspond to the time-reversal-invariant theory with weight $(-1)^{n_{\mathrm{c}}}$, where $n_{\mathrm{c}}$ denotes the number of crosscaps.

\subsection*{Acknowledgments}
The authors would like to thank M.~Do\l\k{e}ga and D.~Lewa\'nski for valuable discussions, and G.~Borot, P.~Georgieva, E.~A.~Mazenc, M.~Mulase, P.~Norbury and Y.~Schuler for comments on an early draft. We also thank ETH Zürich, the Universitat Politècnica de Catalunya, the University of Melbourne, and Nagoya University for their hospitality.

N.K.C. is supported by the Ramón y Cajal fellowship RYC2023-042878-I, funded by MCIN/AEI/\allowbreak10.13039/\allowbreak501100011033 and by the European Social Fund Plus (FSE+). 
E.G.-F. is supported by the Ramón y Cajal fellowship RYC2023-045188-I, funded by MCIN/AEI/10.13039/501100011033 and by the FSE+. She also acknowledges support from a Tremplin grant (Sorbonne Université), a PEPS grant (CNRS), the ERC-SyG ReNewQuantum, the ANR CarteEtPlus ANR-23-CE48-0018, and the project PID2024-155686NB-I00 of the Spanish Ministry of Science and Innovation. 
A.G. is supported by a Hermann--Weyl Instructorship from the Forschungsinstitut für Mathematik at ETH~Zürich. He also acknowledges support from an ETH~Fellowship (22-2~FEL-003). 
K.O. acknowledges support from JSPS KAKENHI Grant-in-Aid for JSPS Fellows (22KJ0715) and for Early-Career Scientists (23K12968,\allowbreak26K16980), and also in part for Scientific Research B (24K00525). K.O.\ also acknowledges support from the Kobayashi--Maskawa Institute for the Origin of Particles and the Universe at Nagoya University.

\section{M\"obius graphs and the measure of non-orientability}
\label{sec:defs}

In this section we introduce M\"obius graphs, their moduli space, and the measure of non-orientability. The term ``M\"obius graphs'' was coined in \cite{MW03} in the study of Feynman diagram expansions of orthogonal matrix models (see also \cite{BIPZ78,MY05}). M\"obius graphs are the non-orientable analogue of ribbon graphs, which arise, for instance, in the context of the Gaussian unitary ensemble. Although for fixed genus and number of faces these graphs form a discrete set, one can introduce a moduli space by endowing each edge with a length, i.e. a metric. In the orientable case, the resulting moduli space of metric ribbon graphs is isomorphic to the moduli space of curves and has played a crucial role in understanding several of its fundamental properties. Here, we introduce the corresponding non-orientable picture.

Following Chapuy and Do\l\k{e}ga \cite{CD22}, we define a measure of non-orientability on this moduli space. Their notion is motivated by the deformation of Schur symmetric functions into Jack symmetric functions with parameter $1+b$, which appears in connection with the Gaussian $\beta$-ensemble, where \smash{$-\tfrac{b}{\sqrt{1+b}} = \beta^{1/2} - \beta^{-1/2}$}. In our setting, the measure of non-orientability of a metric M\"obius graph is a function that, loosely speaking, records ``how non-orientable'' a point of the moduli space is via the refinement parameter~$b$.

\subsection{M\"obius graphs}
\label{sub:moebius_graphs}
A ribbon graph is a graph $G$ equipped with a cyclic order on the half-edges incident to each vertex. A bicoloured ribbon graph is a ribbon graph together with a $\Z_2$-assignment on its edges. Given a bicoloured ribbon graph $G$, we define its \emph{flip} at a vertex $v$ to be the bicoloured ribbon graph $G'$ obtained by reversing the cyclic order at $v$ and, simultaneously, reversing the $\Z_2$-colouring on all edges adjacent to $v$. An example of a flip move is pictured below, with the $0$-coloured edges shown in black and the $1$-coloured edges shown in orange.
\begin{center}
\begin{tikzpicture}[x=1pt,y=1pt,scale=.6]
	\draw (72, 584) -- (136, 584) -- (136, 648);
	\draw (136, 520) -- (136, 584);
	\draw [BurntOrange] (136, 584) -- (200, 584);
	\node at (136, 584) {\scriptsize$\bullet$};

	\draw [->] (224, 584) -- (256, 584);

	\draw [BurntOrange] (280, 584) -- (344, 584);
	\draw [BurntOrange] (344, 584) .. controls (344, 568) and (332, 568) .. (326, 573.3333) .. controls (320, 578.6667) and (320, 589.3333) .. (326, 597.6667) .. controls (332, 606) and (344, 612) .. (344, 648);
	\draw [BurntOrange] (344, 584) .. controls (344, 600) and (356, 600) .. (362, 594.6667) .. controls (368, 589.3333) and (368, 578.6667) .. (362, 570.3333) .. controls (356, 562) and (344, 556) .. (344, 520);
	\draw (344, 584) -- (408, 584);
	\node at (344, 584) {\scriptsize$\bullet$};
\end{tikzpicture}
\end{center}
Two bicoloured ribbon graphs are called equivalent if they are related by a sequence of vertex flips.

\begin{definition}
	A \emph{M\"obius graph} is an equivalence class $[G]$ of bicoloured ribbon graphs under vertex flips. By abuse of notation, we denote such an equivalence class simply by $G$.
\end{definition}

Given a M\"obius graph $G$, its topological realisation is the homeomorphism class of a (possibly non-orientable) surface with boundary~$\Sigma_G$, obtained by replacing each $0$- or $1$-coloured edge with an untwisted or twisted ribbon, respectively, and gluing these ribbons at the vertices according to the prescribed cyclic orders. Throughout the paper, we freely pass between the description of M\"obius graphs as bicoloured ribbon graphs and as their topological realisations, depending on convenience. In terms of the topological realisation, a flip at a vertex can be visualised as
\begin{center}
	\begin{tikzpicture}[x=1pt,y=1pt,scale=.5]
		\draw (200, 768)  -- (184, 768)  .. controls (176, 768) and (176, 752) .. (168, 752)  -- (144, 752)  -- (144, 696);
		\draw (128, 824)  -- (128, 768)  -- (72, 768);
		\draw (72, 752)  -- (128, 752)  -- (128, 696);
		\draw[white, line width=1.5mm] (168, 768)  .. controls (176, 768) and (176, 752) .. (184, 752);
		\draw (144, 824)  -- (144, 768)  -- (168, 768)  .. controls (176, 768) and (176, 752) .. (184, 752)  -- (200, 752);
		\draw[->] (224, 760)  -- (256, 760);
		\draw (352, 696)  -- (352, 700)  .. controls (352, 708) and (336, 708) .. (336, 716)  .. controls (336, 720) and (336, 728) .. (342, 732)  .. controls (348, 736) and (360, 736) .. (360, 752);
		\draw (336, 824)  -- (336, 820)  .. controls (336, 812) and (352, 812) .. (352, 804)  .. controls (352, 800) and (352, 792) .. (346, 788)  .. controls (340, 784) and (328, 784) .. (328, 768);
		\draw (280, 752)  -- (288, 752)  .. controls (296, 752) and (296, 768) .. (304, 768)  -- (336, 768)  .. controls (336, 792) and (376, 792) .. (376, 768);
		\draw[white, line width=1.5mm] (288, 768)  .. controls (296, 768) and (296, 752) .. (304, 752);
		\draw[white, line width=1.5mm] (336, 804)  .. controls (336, 812) and (352, 812) .. (352, 820);
		\draw[white, line width=1.5mm] (336, 700)  .. controls (336, 708) and (352, 708) .. (352, 716);
		\draw (280, 768)  -- (288, 768)  .. controls (296, 768) and (296, 752) .. (304, 752)  -- (336, 752)  .. controls (336, 744) and (328, 744) .. (328, 752);
		\draw (312, 768)  .. controls (312, 792) and (324, 792) .. (330, 794)  .. controls (336, 796) and (336, 800) .. (336, 804)  .. controls (336, 812) and (352, 812) .. (352, 820)  -- (352, 824);
		\draw (360, 768)  .. controls (360, 776) and (352, 776) .. (352, 768)  -- (408, 768);
		\draw (312, 752)  .. controls (312, 728) and (352, 728) .. (352, 752)  -- (408, 752);
		\draw (336, 696)  -- (336, 700)  .. controls (336, 708) and (352, 708) .. (352, 716)  .. controls (352, 720) and (352, 724) .. (358, 726)  .. controls (364, 728) and (376, 728) .. (376, 752);
	\end{tikzpicture}
\end{center}
which provides a geometric motivation for the definition above.

In the topological realisation of a M\"obius graph $G$, each boundary component (or face) of $\Sigma_G$ is a circle which, in general, does not carry a consistent orientation. From now on, we assume that the $n$ boundary components are labelled by $1,\ldots,n$. We define the \emph{type} of $G$ to be the pair $(g,n)$, where $g$ is the genus and $n$ is the number of boundary components of $\Sigma_G$. The genus is defined by the relation $\chi(\Sigma_G) = 2-2g-n$, where $\chi(\Sigma_G)$ denotes the Euler characteristic of $\Sigma_G$. If $\Sigma_G$ is orientable, then $g$ agrees with the usual genus. If $\Sigma_G$ is non-orientable, then $2g$ is the maximal number of cross-caps of $\Sigma_G$. In particular, \smash{$g\in \tfrac12\Z_{\ge 0}$} is a non-negative half-integer, while $n\in \Z_{>0}$.

The Euler relation can be written as
\begin{equation}
	2g - 2 + n = |E(G)| - |V(G)|,
\end{equation}
where $E(G)$ and $V(G)$ denote the sets of edges and vertices of $G$, respectively. We impose the stability condition $2g-2+n>0$, so that $|E(G)|>|V(G)|$. If all vertices have valency at least~$3$, then there are only finitely many M\"obius graphs of a fixed type $(g,n)$. In particular, if all vertices are trivalent, then $|E(G)| = 6g-6+3n$. We denote by $\MG_{g,n}$ the set of (isomorphism classes of) M\"obius graphs of type $(g,n)$. Unless stated otherwise, all M\"obius graphs are assumed to be connected, face-labelled, and with all vertices of valency at least~$3$.

For a given M\"obius graph $G$, we define $\Aut(G)$ to be the group of automorphisms of the underlying graph that preserve the cyclic orderings, the $\Z_2$-colouring, and the face labelling, up to vertex flips. Note that, for an orientable ribbon graph $G$, the automorphism group viewed as a M\"obius graph is twice as large as the automorphism group viewed as an oriented ribbon graph. From a topological perspective, this reflects the distinction between oriented and merely orientable surfaces, and accounts for an additional factor of~$2$ coming from orientation-reversing morphisms.

\begin{example}\label{ex:chi1}
	Below is a list of all M\"obius graphs, together with their topological realisations, for $2g-2+n=1$, as well as the orders of their automorphism groups. The cyclic ordering at each vertex is given by the orientation of the plane. The $0$-coloured edges are shown in black, the $1$-coloured edges in orange. The face labelling is indicated by a number in $\set{1,\dots,n}$ placed inside each face.\\[1ex]
	$\bullet\;(g,n) = (0,3)$: a pair of pants.
	\begin{center}
		\begin{tikzpicture}[x=1pt,y=1pt,scale=.45]
			\draw (42.6667, 794.6667) .. controls (53.3333, 808) and (74.6667, 808) .. (85.3333, 794.6667) .. controls (96, 781.3333) and (96, 754.6667) .. (85.3333, 741.3333) .. controls (74.6667, 728) and (53.3333, 728) .. (42.6667, 741.3333) .. controls (32, 754.6667) and (32, 781.3333) .. cycle;
			\draw (34.6667, 767.985) -- (93.3333, 768.001);
			\draw (144, 768) circle[radius=16];
			\draw (208, 768) circle[radius=16];
			\draw (160, 768) -- (192, 768);
			\draw (322.6667, 746.6667) .. controls (336, 757.3333) and (336, 778.6667) .. (322.6667, 789.3333) .. controls (309.3333, 800) and (282.6667, 800) .. (269.3333, 789.3333) .. controls (256, 778.6667) and (256, 757.3333) .. (269.3333, 746.6667) .. controls (282.6667, 736) and (309.3333, 736) .. cycle;
			\draw (288, 768) circle[radius=12];
			\draw (300, 768) -- (332.667, 768);
			\draw  [shift={(381.333, 789.333)}, scale=-1](0, 0) .. controls (13.3333, 10.6667) and (13.3333, 32) .. (0, 42.6667) .. controls (-13.3333, 53.3333) and (-40, 53.3333) .. (-53.3333, 42.6667) .. controls (-66.6667, 32) and (-66.6667, 10.6667) .. (-53.3333, 0) .. controls (-40, -10.6667) and (-13.3333, -10.6667) .. cycle;
			\draw (416, 768) circle[radius=-12];
			\draw  [shift={(404, 768)}, scale=-1](0, 0) -- (32.667, 0);
			\draw (496, 768) circle[radius=16];
			\draw (528, 768) circle[radius=16];
			\draw (624, 768) circle[radius=32];
			\draw (640, 768) circle[radius=16];
			\draw (720, 768) circle[radius=32];
			\draw (704, 768) circle[radius=16];
			\node at (64, 784) {\tiny$1$};
			\node at (64, 752) {\tiny$2$};
			\node at (32, 800) {\tiny$3$};
			\node at (144, 768) {\tiny$1$};
			\node at (208, 768) {\tiny$2$};
			\node at (176, 800) {\tiny$3$};
			\node at (288, 768) {\tiny$1$};
			\node at (304, 784) {\tiny$2$};
			\node at (256, 800) {\tiny$3$};
			\node at (448, 800) {\tiny$3$};
			\node at (416, 768) {\tiny$2$};
			\node at (400, 784) {\tiny$1$};
			\node at (496, 768) {\tiny$1$};
			\node at (528, 768) {\tiny$2$};
			\node at (512, 800) {\tiny$3$};
			\node at (640, 768) {\tiny$1$};
			\node at (608, 768) {\tiny$2$};
			\node at (592, 800) {\tiny$3$};
			\node at (736, 768) {\tiny$1$};
			\node at (704, 768) {\tiny$2$};
			\node at (752, 800) {\tiny$3$};
			\node at (34.6667, 767.985) {\scriptsize$\bullet$};
			\node at (93.3333, 768.001) {\scriptsize$\bullet$};
			\node at (160, 768) {\scriptsize$\bullet$};
			\node at (192, 768) {\scriptsize$\bullet$};
			\node at (300, 768) {\scriptsize$\bullet$};
			\node at (332.667, 768) {\scriptsize$\bullet$};
			\node at (371.333, 768) {\scriptsize$\bullet$};
			\node at (404, 768) {\scriptsize$\bullet$};
			\node at (512, 768) {\scriptsize$\bullet$};
			\node at (656, 768) {\scriptsize$\bullet$};
			\node at (688, 768) {\scriptsize$\bullet$};
			\draw(40, 701.3333) .. controls (52, 716) and (76, 716) .. (88, 701.3333) .. controls (100, 686.6667) and (100, 657.3333) .. (88, 642.6667) .. controls (76, 628) and (52, 628) .. (40, 642.6667) .. controls (28, 657.3333) and (28, 686.6667) .. cycle;
			\draw(38.493, 667.932) -- (89.5073, 667.937) .. controls (88.9095, 660.353) and (86.6293, 653.095) .. (82.6667, 648) .. controls (73.3333, 636) and (54.6667, 636) .. (45.3333, 648) .. controls (41.3715, 653.094) and (39.0914, 660.35) .. (38.493, 667.932) -- cycle;
			\draw (45.3333, 696) .. controls (54.6667, 708) and (73.3333, 708) .. (82.6667, 696) .. controls (86.631, 690.903) and (88.9115, 683.641) .. (89.5081, 676.053) -- (38.4916, 676.05) .. controls (39.0878, 683.639) and (41.3684, 690.902) .. (45.3333, 696) -- cycle;
			\draw(144, 672) circle[radius=12];
			\draw(208, 672) circle[radius=12];
			\draw(188.4041, 676) arc[start angle=-168.4631, end angle=168.4631, x radius=20, y radius=-20] -- (164, 668) arc[start angle=11.4212, end angle=348.5788, x radius=20.2, y radius=-20.2] -- cycle;
			\draw(288, 672) circle[radius=8];
			\draw(325.3333, 648) .. controls (340, 660) and (340, 684) .. (325.3333, 696) .. controls (310.6667, 708) and (281.3333, 708) .. (266.6667, 696) .. controls (252, 684) and (252, 660) .. (266.6667, 648) .. controls (281.3333, 636) and (310.6667, 636) .. cycle;
			\draw(328.671, 676) .. controls (327.75, 681.579) and (324.859, 686.888) .. (320, 690.667) .. controls (308, 700) and (284, 700) .. (272, 690.667) .. controls (260, 681.333) and (260, 662.667) .. (272, 653.333) .. controls (284, 644) and (308, 644) .. (320, 653.333) .. controls (324.854, 657.109) and (327.745, 662.412) .. (328.671, 668) -- (303.492, 668) arc[start angle=14.4775, end angle=345.5225, x radius=16, y radius=-16] -- cycle;
			\draw(412, 672) circle[radius=-8];
			\draw [shift={(374.667, 696)}, scale=-1](0, 0) .. controls (14.6667, 12) and (14.6667, 36) .. (0, 48) .. controls (-14.6667, 60) and (-44, 60) .. (-58.6667, 48) .. controls (-73.3333, 36) and (-73.3333, 12) .. (-58.6667, 0) .. controls (-44, -12) and (-14.6667, -12) .. cycle;
			\draw(396.508, 676) arc[start angle=14.4775, end angle=345.5225, x radius=-16, y radius=16] -- (371.329, 668) .. controls (372.25, 662.421) and (375.141, 657.112) .. (380, 653.333) .. controls (392, 644) and (416, 644) .. (428, 653.333) .. controls (440, 662.667) and (440, 681.333) .. (428, 690.667) .. controls (416, 700) and (392, 700) .. (380, 690.667) .. controls (375.146, 686.891) and (372.255, 681.588) .. (371.329, 676) -- cycle;
			\draw(493.8443, 671.9384) circle[radius=12];
			\draw(530.0135, 671.9764) circle[radius=12];
			\draw(511.9197, 680.4978) arc[start angle=25.3402, end angle=334.7803, radius=20] arc[start angle=-154.6586, end angle=154.7791, radius=20] -- cycle;
			\draw(634.147, 671.917) circle[radius=12];
			\draw(624, 672) circle[radius=36];
			\draw(648.112, 686.234) arc[start angle=45.7131, end angle=313.3529, radius=20] arc[start angle=31.4898, end angle=329.4454, x radius=28, y radius=-28] -- cycle;
			\draw(709.85, 672.08) circle[radius=-12];
			\draw(720, 672) circle[radius=-36];
			\draw(695.888, 657.766) arc[start angle=45.7131, end angle=313.3529, radius=-20] arc[start angle=31.4898, end angle=329.4454, x radius=-28, y radius=28] -- cycle;
			\node at (64, 592) {$2$};
			\node at (176, 592) {$2$};
			\node at (288, 592) {$2$};
			\node at (400, 592) {$2$};
			\node at (512, 592) {$2$};
			\node at (624, 592) {$2$};
			\node at (720, 592) {$2$};
			\node at (-32, 592) {$|\Aut(G)|$};
		\end{tikzpicture}
	\end{center}

	$\bullet\;(g,n) = (\frac{1}{2},2)$: a two-holed cross-cap.
	\begin{center}
		\begin{tikzpicture}[x=1pt,y=1pt,scale=.45]
			\draw (42.6667, 794.6667) .. controls (53.3333, 808) and (74.6667, 808) .. (85.3333, 794.6667) .. controls (96, 781.3333) and (96, 754.6667) .. (85.3333, 741.3333) .. controls (74.6667, 728) and (53.3333, 728) .. (42.6667, 741.3333) .. controls (32, 754.6667) and (32, 781.3333) .. cycle;
			\draw [BurntOrange] (34.6667, 767.985) -- (93.3333, 768.001);
			\node at (64, 784) {\tiny$1$};
			\node at (32, 816) {\tiny$2$};
			\node at (34.6667, 767.985) {\scriptsize$\bullet$};
			\node at (93.3333, 768.001) {\scriptsize$\bullet$};
			\draw (130.6667, 767.985) -- (189.3333, 768.001);
			\draw (138.667, 794.667) .. controls (149.333, 808) and (170.667, 808) .. (181.333, 794.667) .. controls (186.667, 788) and (189.333, 778) .. (189.333, 768);
			\draw [BurntOrange] (189.333, 768) .. controls (189.333, 758) and (186.667, 748) .. (181.333, 741.333) .. controls (170.667, 728) and (149.333, 728) .. (138.667, 741.333) .. controls (133.333, 748) and (130.667, 758) .. (130.667, 768);
			\draw (130.667, 768) .. controls (130.667, 778) and (133.333, 788) .. (138.667, 794.667);
			\node at (160, 784) {\tiny$1$};
			\node at (128, 816) {\tiny$2$};
			\node at (130.6667, 767.985) {\scriptsize$\bullet$};
			\node at (189.3333, 768.001) {\scriptsize$\bullet$};
			\draw [BurntOrange] (240, 768) circle[radius=16];
			\draw (304, 768) circle[radius=16];
			\draw (256, 768) -- (288, 768);
			\node at (240, 768) {\tiny$1$};
			\node at (304, 768) {\tiny$2$};
			\node at (256, 768) {\scriptsize$\bullet$};
			\node at (288, 768) {\scriptsize$\bullet$};
			\draw (368, 768) circle[radius=16];
			\draw [BurntOrange] (432, 768) circle[radius=16];
			\draw (384, 768) -- (416, 768);
			\node at (368, 768) {\tiny$1$};
			\node at (432, 768) {\tiny$2$};
			\node at (384, 768) {\scriptsize$\bullet$};
			\node at (416, 768) {\scriptsize$\bullet$};
			\draw [BurntOrange] (704, 752) circle[radius=32];
			\draw [BurntOrange] (736, 784) circle[radius=32];
			\node at (704, 784) {\scriptsize$\bullet$};
			\node at (704, 744) {\tiny$1$};
			\node at (736, 792) {\tiny$2$};
			\draw [BurntOrange] (496, 768) circle[radius=16];
			\draw (528, 768) circle[radius=16];
			\node at (496, 768) {\tiny$1$};
			\node at (528, 768) {\tiny$2$};
			\node at (512, 768) {\scriptsize$\bullet$};
			\draw (592, 768) circle[radius=16];
			\draw [BurntOrange] (624, 768) circle[radius=16];
			\node at (592, 768) {\tiny$1$};
			\node at (624, 768) {\tiny$2$};
			\node at (608, 768) {\scriptsize$\bullet$};
			\draw (67.994, 651.924) .. controls (64, 652) and (64, 660) .. (60, 660);
			\draw [white, line width=1mm](60, 652) .. controls (64, 652) and (64, 660) .. (67.9948, 660.04);
			\draw (40, 685.3333) .. controls (52, 700) and (76, 700) .. (88, 685.3333) .. controls (100, 670.6667) and (100, 641.3333) .. (88, 626.6667) .. controls (76, 612) and (52, 612) .. (40, 626.6667) .. controls (28, 641.3333) and (28, 670.6667) .. cycle;
			\draw (67.994, 651.924) -- (89.5073, 651.937) .. controls (88.9095, 644.353) and (86.6293, 637.095) .. (82.6667, 632) .. controls (73.3333, 620) and (54.6667, 620) .. (45.3333, 632) .. controls (41.3715, 637.094) and (39.0914, 644.35) .. (38.4889, 651.985) -- (60, 652) .. controls (64, 652) and (64, 660) .. (67.9948, 660.04) -- (89.5081, 660.053) .. controls (88.9115, 667.641) and (86.631, 674.903) .. (82.6667, 680) .. controls (73.3333, 692) and (54.6667, 692) .. (45.3333, 680) .. controls (41.3684, 674.902) and (39.0878, 667.639) .. (38.4867, 659.987) -- (60, 660);
			\draw (155.878, 623.35) .. controls (160.005, 622.588) and (160.02, 615.529) .. (164.019, 615.912) .. controls (171.62, 616.843) and (178.895, 620.428) .. (184, 626.667) .. controls (190, 634) and (193, 645) .. (193, 656) .. controls (193, 667) and (190, 678) .. (184, 685.333) .. controls (172, 700) and (148, 700) .. (136, 685.333) .. controls (130, 678) and (127, 667) .. (127, 656);
			\draw [white, line width=1mm](164.1363, 623.352) .. controls (160.0103, 622.799) and (160.0021, 615.565) .. (155.9997, 615.909);
			\draw (185.508, 660.053) -- (134.487, 659.987) .. controls (135.088, 667.639) and (137.368, 674.902) .. (141.333, 680) .. controls (150.667, 692) and (169.333, 692) .. (178.667, 680) .. controls (182.631, 674.903) and (184.911, 667.641) .. (185.508, 660.053) -- cycle;
			\draw (155.878, 623.35) .. controls (150.331, 624.299) and (145.08, 627.183) .. (141.333, 632) .. controls (137.371, 637.094) and (135.091, 644.35) .. (134.489, 651.985) -- (185.507, 651.937) .. controls (184.909, 644.353) and (182.629, 637.095) .. (178.667, 632) .. controls (174.923, 627.187) and (169.678, 624.304) .. (164.136, 623.352) .. controls (160.01, 622.799) and (160.002, 615.565) .. (156, 615.909) .. controls (148.393, 616.837) and (141.109, 620.422) .. (136, 626.667) .. controls (130, 634) and (127, 645) .. (127, 656);
			\draw (220.397, 659.985) .. controls (219.69, 656.007) and (226.963, 655.974) .. (228.688, 651.996);
			\draw [white, line width=1mm](220.399, 652.006) .. controls (219.671, 655.949) and (227.027, 656.028) .. (228.682, 659.989);
			\draw (304, 656) circle[radius=12];
			\draw (228.6877, 651.9959) arc[start angle=-160.5081, end angle=160.5851, radius=12] .. controls (226.998, 655.987) and (219.689, 656.009) .. (220.399, 652.006) arc[start angle=-168.5961, end angle=-11.4212, radius=20.2] -- (284.404, 652) arc[start angle=-168.4631, end angle=168.4631, radius=20] -- (260, 660) arc[start angle=11.4212, end angle=168.6222, radius=20.2];
			\draw [shift={(451.274, 652.015)}, scale=-1](0, 0) .. controls (-0.707, -3.978) and (6.566, -4.011) .. (8.291, -7.989);
			\draw [shift={(451.272, 659.994)}, scale=-1, white, line width=1mm](0, 0) .. controls (-0.728, 3.943) and (6.628, 4.022) .. (8.283, 7.983);
			\draw (367.671, 656) circle[radius=-12];
			\draw [shift={(442.983, 660.004)}, scale=-1](0, 0) arc[start angle=-160.5081, end angle=160.5851, radius=12] .. controls (-1.6897, 3.9911) and (-8.9987, 4.0131) .. (-8.2887, 0.0101) arc[start angle=-168.5961, end angle=-11.4212, radius=20.2] -- (55.7163, 0.0041) arc[start angle=-168.4631, end angle=168.4631, radius=20] -- (31.3123, 8.0041) arc[start angle=11.4212, end angle=168.6222, radius=20.2];
			\draw (700.001, 671.7491) arc[start angle=97.179, end angle=172.8752, radius=32] .. controls (671.868, 639.988) and (679.344, 640.003) .. (680, 636) arc[start angle=-170.5381, end angle=76.9443, radius=24.3204] arc[start angle=-159.7388, end angle=-102.5498, radius=23.9733];
			\draw [white, line width=1mm](680.177, 644.006) .. controls (679.733, 640.02) and (671.474, 639.993) .. (672, 636);
			\draw (736.2651, 640.2855) arc[start angle=-82.3406, end angle=-7.1248, radius=32] .. controls (764.217, 672.015) and (756.286, 672.012) .. (755.673, 675.946) arc[start angle=9.4627, end angle=180.0167, radius=23.9837] arc[start angle=-82.8177, end angle=172.875, x radius=32.249, y radius=-32.249] .. controls (671.474, 639.993) and (679.733, 640.02) .. (680.177, 644.006) arc[start angle=-169.952, end angle=-97.0288, x radius=24.2217, y radius=-24.2217] arc[start angle=-165.1903, end angle=-96.5824, radius=31.9979];
			\draw [white, line width=1mm](763.753, 675.9694) .. controls (764.189, 671.9934) and (756.222, 671.9854) .. (755.673, 668.0544);
			\draw (700.001, 671.749) arc[start angle=179.5506, end angle=352.8752, x radius=32, y radius=-32] .. controls (764.189, 671.993) and (756.222, 671.985) .. (755.673, 668.054) arc[start angle=9.4639, end angle=82.3437, x radius=24.0111, y radius=-24.0111];
			\draw (474.266, 660.026) .. controls (473.559, 656.048) and (480.832, 656.015) .. (482.507, 652.004);
			\draw [white, line width=1mm](474.231, 652.0236) .. controls (473.503, 655.9666) and (480.859, 656.0456) .. (482.514, 660.0066);
			\draw (530.013, 655.9764) circle[radius=12];
			\draw (482.5077, 652.0043) arc[start angle=-160.8643, end angle=160.769, radius=12.1771] .. controls (480.859, 656.046) and (473.503, 655.967) .. (474.231, 652.024) arc[start angle=-168.7142, end angle=-25.2197, radius=20] arc[start angle=-154.6586, end angle=154.7791, radius=20] arc[start angle=25.3402, end angle=168.2058, radius=20];
			\draw [shift={(645.256, 651.958)}, rotate=-179.8973](0, 0) .. controls (-0.707, -3.978) and (6.566, -4.011) .. (8.241, -8.022);
			\draw [shift={(645.278, 659.962)}, rotate=-179.8973, white, line width=1mm](0, 0) .. controls (-0.728, 3.943) and (6.628, 4.022) .. (8.283, 7.983);
			\draw (589.5051, 655.9053) circle[radius=12];
			\draw [shift={(637.001, 659.965)}, rotate=-179.8973](0, 0) arc[start angle=-160.8643, end angle=160.769, radius=12.1771] .. controls (-1.6487, 4.0417) and (-9.0047, 3.9627) .. (-8.2767, 0.0197) arc[start angle=-168.7142, end angle=-25.2197, radius=20] arc[start angle=-154.6586, end angle=154.7791, radius=20] arc[start angle=25.3402, end angle=168.2058, radius=20];
			\node at (-32, 576) {$|\Aut(G)|$};
			\node at (64, 576) {$4$};
			\node at (160, 576) {$4$};
			\node at (272, 576) {$2$};
			\node at (400, 576) {$2$};
			\node at (720, 576) {$4$};
			\node at (512, 576) {$2$};
			\node at (608, 576) {$2$};
		\end{tikzpicture}
	\end{center}

	$\bullet\;(g,n) = (1,1)$: a one-holed torus or Klein bottle. We omit the labelling as there is only one face. The first two graphs are one-holed tori; the last four are one-holed Klein bottles.
	\begin{center}
		\begin{tikzpicture}[x=1pt,y=1pt,scale=.45]
			\draw (160, 752) circle[radius=32];
			\draw (192, 784) circle[radius=32];
			\node at (160, 784) {\scriptsize$\bullet$};
			\draw [BurntOrange] (42.6667, 794.6667) .. controls (53.3333, 808) and (74.6667, 808) .. (85.3333, 794.6667) .. controls (96, 781.3333) and (96, 754.6667) .. (85.3333, 741.3333) .. controls (74.6667, 728) and (53.3333, 728) .. (42.6667, 741.3333) .. controls (32, 754.6667) and (32, 781.3333) .. cycle;
			\draw [BurntOrange] (34.6667, 767.985) -- (93.3333, 768.001);
			\node at (34.6667, 767.985) {\scriptsize$\bullet$};
			\node at (93.3333, 768.001) {\scriptsize$\bullet$};

			\begin{scope}[xshift=2cm]
			\draw [BurntOrange] (624, 768) circle[radius=16];
			\draw [BurntOrange] (656, 768) circle[radius=16];
			\node at (640, 768) {\scriptsize$\bullet$};
			\draw (512, 752) circle[radius=32];
			\draw [BurntOrange] (544, 784) circle[radius=32];
			\node at (512, 784) {\scriptsize$\bullet$};
			\draw [BurntOrange] (368, 768) circle[radius=16];
			\draw [BurntOrange] (432, 768) circle[radius=16];
			\draw (384, 768) -- (416, 768);
			\node at (384, 768) {\scriptsize$\bullet$};
			\node at (416, 768) {\scriptsize$\bullet$};
			\draw [BurntOrange] (266.6667, 794.6667) .. controls (277.3333, 808) and (298.6667, 808) .. (309.3333, 794.6667) .. controls (320, 781.3333) and (320, 754.6667) .. (309.3333, 741.3333) .. controls (298.6667, 728) and (277.3333, 728) .. (266.6667, 741.3333) .. controls (256, 754.6667) and (256, 781.3333) .. cycle;
			\draw (258.6667, 767.985) -- (317.3333, 768.001);
			\node at (258.6667, 767.985) {\scriptsize$\bullet$};
			\node at (317.3333, 768.001) {\scriptsize$\bullet$};
			\end{scope}

			\begin{scope}[xshift=2cm]
			  \draw (602.266, 660.026) .. controls (601.559, 656.048) and (608.832, 656.015) .. (610.507, 652.004);
			\draw [white, line width=1mm] (602.231, 652.0236) .. controls (601.503, 655.9666) and (608.859, 656.0456) .. (610.514, 660.0066);
			\draw [shift={(677.641, 652.036)}, rotate=-179.7756] (0, 0) .. controls (-0.707, -3.978) and (6.566, -4.011) .. (8.241, -8.022);
			\draw [shift={(677.645, 660.038)}, rotate=-179.7756, white, line width=1mm] (0, 0) .. controls (-0.728, 3.943) and (6.628, 4.022) .. (8.283, 7.983);
			\draw (602.2662, 660.0259) arc[start angle=-168.2058, end angle=-25.3402, x radius=20, y radius=-20] { [rotate=-179.7756] arc[start angle=25.2197, end angle=168.7142, x radius=20, y radius=-20] } .. controls (678.3882, 656.0976) and (671.0326, 655.9898) .. (669.3931, 652.0223) { [rotate=-179.7756] arc[start angle=-160.769, end angle=160.8643, x radius=12.1771, y radius=-12.1771] };
			\draw (610.5077, 652.0043) arc[start angle=-160.8643, end angle=160.769, radius=12.1771] .. controls (608.859, 656.046) and (601.503, 655.967) .. (602.231, 652.024) arc[start angle=-168.7142, end angle=-25.2197, radius=20] arc[start angle=-154.4591, end angle=-11.5699, radius=20.0338];
			\draw (571.753, 668.031) .. controls (572.217, 672.015) and (564.286, 672.012) .. (563.673, 675.946);
			\draw [white, line width=1mm] (571.753, 675.9694) .. controls (572.189, 671.9934) and (564.222, 671.9854) .. (563.673, 668.0544);
			\draw (535.9991, 640.2472) arc[start angle=97.1907, end angle=165.1903, x radius=32.0054, y radius=-32.0054] arc[start angle=97.0291, end angle=436.9685, radius=24.0004] arc[start angle=-159.1175, end angle=-103.1938, radius=24.1283];
			\draw (543.9988, 640.2629) arc[start angle=-82.8463, end angle=-7.1248, radius=31.9841];
			\draw (563.6733, 675.9461) arc[start angle=9.4627, end angle=180.6005, radius=24.0002] arc[start angle=-82.8177, end angle=262.821, x radius=31.9999, y radius=-31.9999] arc[start angle=179.5506, end angle=352.8752, x radius=32, y radius=-32] .. controls (572.189, 671.993) and (564.222, 671.985) .. (563.673, 668.054) arc[start angle=9.4639, end angle=82.9822, x radius=23.9959, y radius=-23.9959];
			\draw (348.397, 659.985) .. controls (347.69, 656.007) and (354.963, 655.974) .. (356.688, 651.996);
			\draw [white, line width=1mm] (348.399, 652.006) .. controls (347.671, 655.949) and (355.027, 656.028) .. (356.682, 659.989);
			\draw [shift={(451.274, 652.015)}, scale=-1] (0, 0) .. controls (-0.707, -3.978) and (6.566, -4.011) .. (8.291, -7.989);
			\draw [shift={(451.272, 659.994)}, scale=-1, white, line width=1mm] (0, 0) .. controls (-0.728, 3.943) and (6.628, 4.022) .. (8.283, 7.983);
			\draw (451.274, 652.015) arc[start angle=-168.6222, end angle=-11.4212, x radius=-20.2, y radius=20.2] -- (388, 652) arc[start angle=11.4212, end angle=168.5961, x radius=20.2, y radius=-20.2] .. controls (347.689, 656.009) and (354.998, 655.987) .. (356.682, 659.989) arc[start angle=-160.5851, end angle=160.5081, x radius=12, y radius=-12];
			\draw (442.9833, 660.0041) arc[start angle=-160.5081, end angle=160.5851, radius=-12] .. controls (444.673, 656.013) and (451.982, 655.991) .. (451.272, 659.994) arc[start angle=-168.5961, end angle=-11.4212, radius=-20.2] -- (388, 660) arc[start angle=11.4212, end angle=168.6222, radius=20.2];
			\draw (283.9997, 696.091) .. controls (288.0197, 696.434) and (288.0094, 689.346) .. (292.1363, 688.648) .. controls (297.6783, 687.696) and (302.9232, 684.813) .. (306.6667, 680) .. controls (310.631, 674.903) and (312.9115, 667.641) .. (313.5081, 660.053) -- (262.4867, 659.987) .. controls (263.0878, 667.639) and (265.3684, 674.902) .. (269.3333, 680) .. controls (273.08, 684.817) and (278.3308, 687.701) .. (283.878, 688.65);
			\draw (283.878, 623.35) .. controls (288.0049, 622.588) and (288.0202, 615.529) .. (292.0195, 615.912) .. controls (299.6195, 616.843) and (306.8954, 620.428) .. (312, 626.667) .. controls (324, 641.333) and (324, 670.667) .. (312, 685.333) .. controls (306.8954, 691.572) and (299.6195, 695.157) .. (292.0195, 696.088);
			\draw [white, line width=1mm] (292.1363, 623.352) .. controls (288.0103, 622.799) and (288.0021, 615.565) .. (283.9997, 615.909);
			\draw [white, line width=1mm] (283.878, 688.65) .. controls (287.9989, 689.396) and (288.0007, 696.523) .. (292.0195, 696.088);
			\draw (283.9997, 696.091) .. controls (276.3928, 695.163) and (269.1089, 691.578) .. (264, 685.333) .. controls (252, 670.667) and (252, 641.333) .. (264, 626.667) .. controls (269.1089, 620.422) and (276.3928, 616.837) .. (283.9997, 615.909) .. controls (288.0021, 615.565) and (288.0103, 622.799) .. (292.1363, 623.352) .. controls (297.6783, 624.304) and (302.9232, 627.187) .. (306.6667, 632) .. controls (310.6293, 637.095) and (312.9095, 644.353) .. (313.5073, 651.937) -- (262.4889, 651.985) .. controls (263.0914, 644.35) and (265.3715, 637.094) .. (269.3333, 632) .. controls (273.08, 627.183) and (278.3308, 624.299) .. (283.878, 623.35);
			\draw (283.878, 688.65) .. controls (287.9989, 689.396) and (288.0007, 696.523) .. (292.0195, 696.088);
			\end{scope}
			\draw (184.9799, 640.1415) arc[start angle=95.4182, end angle=165.1895, x radius=32.0019, y radius=-32.0019] arc[start angle=99.1886, end angle=439.0423, radius=24.073] arc[start angle=-159.7432, end angle=-100.8718, radius=24.009];
			\draw (191.999, 640.2331) arc[start angle=-82.8539, end angle=180.4494, radius=32.0157] arc[start angle=97.1781, end angle=442.8177, radius=31.9998] arc[start angle=179.3023, end angle=442.9138, x radius=23.9596, y radius=-23.9596];
			\draw (60, 652) -- (38.4889, 651.985) .. controls (39.0914, 644.35) and (41.3715, 637.094) .. (45.3333, 632) .. controls (49.08, 627.1828) and (54.3308, 624.2994) .. (59.878, 623.3498) .. controls (64.0049, 622.5881) and (64.0202, 615.5286) .. (68.0195, 615.9117) .. controls (75.6195, 616.8428) and (82.8954, 620.4278) .. (88, 626.6667) .. controls (100, 641.3333) and (100, 670.6667) .. (88, 685.3333) .. controls (82.8954, 691.5722) and (75.6195, 695.1572) .. (68.0195, 696.0883);
			\draw [white, line width=1mm] (68.1363, 623.352) .. controls (64.0103, 622.799) and (64.0021, 615.565) .. (59.9997, 615.909);
			\draw (67.9948, 660.04) -- (89.5081, 660.053) .. controls (88.9115, 667.641) and (86.631, 674.903) .. (82.6667, 680) .. controls (78.9232, 684.813) and (73.6783, 687.6956) .. (68.1363, 688.6478) .. controls (64.0094, 689.3461) and (64.0197, 696.4344) .. (59.9997, 696.0907) .. controls (52.3928, 695.1633) and (45.1089, 691.5775) .. (40, 685.3333) .. controls (28, 670.6667) and (28, 641.3333) .. (40, 626.6667) .. controls (45.1089, 620.4225) and (52.3928, 616.8367) .. (59.9997, 615.9093) .. controls (64.0021, 615.5653) and (64.0103, 622.799) .. (68.1363, 623.3522) .. controls (73.6783, 624.3044) and (78.9232, 627.187) .. (82.6667, 632) .. controls (86.6293, 637.095) and (88.9095, 644.353) .. (89.5073, 651.937) -- (67.994, 651.924);
			\draw [white, line width=1mm] (59.878, 688.65) .. controls (63.9989, 689.396) and (64.0007, 696.523) .. (68.0195, 696.088);
			\draw (67.994, 651.924) .. controls (64, 652) and (64, 660) .. (60, 660);
			\draw [white, line width=1mm] (60, 652) .. controls (64, 652) and (64, 660) .. (67.9948, 660.04);
			\draw (60, 652) .. controls (64, 652) and (64, 660) .. (67.9948, 660.04);
			\draw (60, 660) -- (38.4867, 659.987) .. controls (39.0878, 667.639) and (41.3684, 674.902) .. (45.3333, 680) .. controls (49.08, 684.8172) and (54.3308, 687.7006) .. (59.878, 688.6502) .. controls (63.9989, 689.3959) and (64.0007, 696.5232) .. (68.0195, 696.0883);

			\begin{scope}[xshift=2cm]
			\node at (640, 576) {$4$};
			\node at (528, 576) {$4$};
			\node at (400, 576) {$4$};
			\node at (288, 576) {$4$};
			\end{scope}

			\begin{scope}[xshift=1cm]
			\draw[dotted] (232,820) -- (232,566);
			\end{scope}

			\node at (176, 576) {$8$};
			\node at (64, 576) {$12$};
			\node at (-32, 576) {$|\Aut(G)|$};
		\end{tikzpicture}
	\end{center}
\end{example}

\subsection{Moduli space of metric M\"obius graphs}
A metric on a M\"obius graph $G$ is an assignment of positive real values to its edges, that is, an element \smash{$\ell \in \R_{>0}^{E(G)}$}. A M\"obius graph equipped with a metric is called a \emph{metric M\"obius graph}. It is then natural to define an associated moduli space.

\begin{definition}
	For a given $(g,n) \in \frac{1}{2}\Z_{\ge 0} \times \Z_{>0}$ with $2g - 2 + n > 0$, define the \emph{moduli space of metric M\"obius graphs of type $(g,n)$} as
	\begin{equation}
		\Mod_{g,n}
		\coloneqq
		\Biggl(
			\bigsqcup_{G \in \MG_{g,n}} \frac{\R_{>0}^{E(G)}}{\Aut(G)}
		\Biggr)\Bigg/\!\!\sim\, ,
	\end{equation}
	where the orbicones are glued along boundary strata corresponding to edge degenerations. It is a real orbicell-complex\footnote{
		Strictly speaking, stabilisers may be non-trivial even at generic points, so $\Mod_{g,n}$ is not an orbifold; it is more naturally viewed as a stacky cell-complex. We will abuse terminology and call it an orbicell-complex.
	} of real dimension $6g-6+3n$.
\end{definition}

Define the perimeter map $p \colon \Mod_{g,n} \to \R_{>0}^n$, which sends a metric M\"obius graph to the $n$-tuple of lengths of its faces, ordered according to the prescribed labelling. From now on, we focus on the fibres of the perimeter map, that is, the moduli spaces of metric M\"obius graphs with fixed boundary lengths.

\begin{definition}
	For a fixed $L = (L_1,\dots,L_n) \in \R_{>0}^n$, define the \emph{moduli space of metric M\"obius graphs of type $(g,n)$ with fixed perimeters $L$} as
	\begin{equation}
		\Mod_{g,n}(L) \coloneqq p^{-1}(L).
	\end{equation}
	It is a real orbicell-complex of real dimension $6g-6+2n$. Notice that the dimension may be odd since the genus is a half-integer.
\end{definition}

The orbicell structure of $\Mod_{g,n}(L)$ can be described as follows. For a fixed $G \in \MG_{g,n}$, define the edge-face adjacency matrix $A_G = (a_{i,e})_{i=1,\ldots,n,\, e \in E(G)}$, where $a_{i,e}$ is the number of times the edge $e$ appears in the $i$-th face. In particular, $a_{i,e}$ is equal to $0$, $1$, or $2$, and the entries of every column of $A_G$ sum to $2$. The contribution of $G$ to $\Mod_{g,n}(L)$ is the orbifold polytope
\begin{equation}
	\frac{P_G(L)}{\Aut(G)},
	\qquad
	P_{G}(L) \coloneqq \Set{\ell \in \R_{>0}^{E(G)} \mid A_G\ell = L}.
\end{equation}
In what follows, we denote a metric M\"obius graph by $\bm{G}$, and use $G$ for the underlying M\"obius graph without metric. When necessary, the associated metric is denoted by $\ell_G$, or simply by $\ell$. Notice that the isotropy group of a point $\bm{G}$, denoted $\Aut(\bm{G})$, is the subgroup of $\Aut(G)$ that preserves the metric.

\subsection{Connected components and relation to the moduli of Riemann/Klein surfaces}
The moduli space of metric M\"obius graphs admits a natural decomposition according to whether the associated topological realisation is orientable:
\begin{equation}
	\Mod_{g,n}(L)
	=
	\Mod_{g,n}^+(L)
	\sqcup
	\Mod_{g,n}^-(L),
\end{equation}
where $\Mod_{g,n}^+(L)$ (resp.\ $\Mod_{g,n}^-(L)$) parametrises metric M\"obius graphs of type $(g,n)$ with fixed perimeters~$L$ whose topological realisation is orientable (resp.\ non-orientable).

\begin{figure}
	\centering
	\begin{tikzpicture}[x=1pt,y=1pt,scale=1]
	  \fill[black,opacity=.05](288, 696) -- (376, 696) -- (376, 752) -- cycle;
	  \fill[black,opacity=.05](376, 696) -- (464, 696) -- (376, 752) -- cycle;
	  \draw[densely dotted,opacity=.5](376, 640) -- (464, 696) -- (376, 752);
	  \draw[opacity=.5](288, 696) -- (376, 752) -- (376, 696);
	  \draw[opacity=.1](288, 696) -- (376, 640) -- (376, 696) -- (288, 696);
	  \draw[fill=white] (288, 696) circle[radius=1];
	  \draw[fill=white] (376, 640) circle[radius=1];
	  \draw[fill=white] (376, 752) circle[radius=1];
	  \draw[fill=white] (464, 696) circle[radius=1];
	  \draw(340, 688) -- (340, 688);
	  \fill[black,opacity=.05](256, 640) -- (192, 680) -- (192, 752) -- cycle;
	  \draw[opacity=.1](224, 696) -- (256, 640);
	  \draw[opacity=.1](256, 640) -- (192, 680) -- (192, 752);
	  \draw[opacity=.1](192, 680) -- (128, 640);
	  \draw[opacity=.1](256, 640) -- (128, 640) -- (192, 752);
	  \draw[opacity=.5] (224,696) -- (192,752);
	  \draw[fill=white] (192, 752) circle[radius=1];
	  \draw[fill=white] (256, 640) circle[radius=1];
	  \draw[fill=white] (128, 640) circle[radius=1];
	  \draw(204.3594, 722.371) circle[radius=8];
	  \draw(212.3594, 730.371) circle[radius=8];
	  \draw[BurntOrange](212, 701.3333) .. controls (217.3333, 701.3333) and (222.6667, 694.6667) .. (222.6667, 688) .. controls (222.6667, 681.3333) and (217.3333, 674.6667) .. (212, 674.6667) .. controls (206.6667, 674.6667) and (201.3333, 681.3333) .. (201.3333, 688) .. controls (201.3333, 694.6667) and (206.6667, 701.3333) .. cycle;
	  \draw[BurntOrange](201.333, 688) -- (222.667, 688);
	  \draw[->](196.797, 677.0019) arc[start angle=-32.0054, end angle=90, radius=5.6569];
	  \draw[->](192, 685.6569) arc[start angle=90, end angle=212.0054, radius=5.6569];
	  \draw[->](187.203, 677.0019) arc[start angle=-147.9946, end angle=-32.0054, radius=5.6569];
	  \draw[->](226.8066, 691.0885) arc[start angle=-60.2551, end angle=119.7449, radius=5.6569];
	  \node at (234, 706) {$\mathbb{Z}_2$};
	  \node at (192, 664) {$\mathbb{Z}_3$};
	  \node at (201.333, 688) {\tiny$\bullet$};
	  \node at (222.6667, 688) {\tiny$\bullet$};
	  \node at (204.3594, 730.371) {\tiny$\bullet$};
	  \node at (265, 688) {$\sqcup$};
	  \draw(328, 720) circle[radius=8];
	  \draw[BurntOrange](336, 728) circle[radius=8];
	  \draw[BurntOrange](370.343, 724) circle[radius=5.6569];
	  \draw[BurntOrange](381.6569, 724) circle[radius=5.6569];
	  \draw[BurntOrange](406.343, 696) circle[radius=5.6569];
	  \draw[BurntOrange](425.6569, 696) circle[radius=5.6569];
	  \draw(412, 696) -- (420, 696);
	  \draw[BurntOrange](344, 709.3333) .. controls (349.3333, 709.3333) and (354.6667, 702.6667) .. (354.6667, 696) .. controls (354.6667, 689.3333) and (349.3333, 682.6667) .. (344, 682.6667) .. controls (338.6667, 682.6667) and (333.3333, 689.3333) .. (333.3333, 696) .. controls (333.3333, 702.6667) and (338.6667, 709.3333) .. cycle;
	  \draw(333.3333, 695.9996) -- (354.6667, 695.9995);
	  \node at (412, 696) {\tiny$\bullet$};
	  \node at (420.0001, 696) {\tiny$\bullet$};
	  \node at (333.3333, 696.0004) {\tiny$\bullet$};
	  \node at (354.6667, 695.9995) {\tiny$\bullet$};
	  \node at (328, 728) {\tiny$\bullet$};
	  \node at (376, 724) {\tiny$\bullet$};
	  \draw[->](464, 684) arc[start angle=-90, end angle=90, radius=12];
	  \node at (480, 708) {$\mathbb{Z}_2$};
	  \node at (90, 688) {$\mathcal{N}_{1,1}(L_1) =$};
	\end{tikzpicture}
	\caption{
		A depiction of $\Mod_{1,1}(L_1)$, which has two connected components.
		The component $\Mod_{1,1}^+(L_1)$ (left) parametrises metric M\"obius graphs whose underlying surface is a one-holed torus. It consists of two cells: one $2$-dimensional cell, with a $\Z_3$-action as indicated, and one $1$-dimensional cell, with a $\Z_2$-action as indicated; in addition, a residual $\Z_4$ fixes every point in both cells.
		The component $\Mod_{1,1}^-(L_1)$ (right) parametrises metric M\"obius graphs whose underlying surface is a one-holed Klein bottle. It consists of four cells: two $2$-dimensional cells and two $1$-dimensional cells, each with a $\Z_2$-action as indicated; in addition, a residual $\Z_2$ fixes every point in all cells.
		The darker regions indicate a fundamental domain for the quotient by the corresponding automorphism group. The vertices of all cells are excluded from the moduli space, as are the dotted edges on the right.
	}
	\label{fig:moduli11}
\end{figure}

\subsubsection{The orientable component}
For $g\in\Z_{\ge 0}$, the space $\Mod_{g,n}^+(L)$ identifies with the moduli space of metric ribbon graphs with perimeters~$L$, and hence with the moduli space $\mc{M}_{g,n}$ of smooth curves (equivalently, Riemann surfaces) via Strebel differentials \cite{Str84} (see also \cite{Kon92,ABCGLW26}). Passing from oriented to orientable surfaces introduces a global orientation-reversing involution, and therefore, for fixed $L\in\R_{>0}^n$,
\begin{equation}\label{eq:iso:or}
	\Mod_{g,n}^+(L)
	\cong
	\frac{\mc{M}_{g,n}}{\Z_2}.
\end{equation}
Here and below, these identifications are understood as isomorphisms of topological real orbifolds. For $g\in\Z_{\ge 0}+\tfrac12$, the space $\Mod_{g,n}^+(L)$ is empty, since there are no orientable surfaces of genus~$g$.

\subsubsection{The non-orientable component}
The space $\Mod_{g,n}^-(L)$ is related to moduli of real curves (equivalently, Klein surfaces). Recall that a real curve (or symmetric Riemann surface) is a Riemann surface equipped with an anti-holomorphic involution; the associated Klein surface is its quotient, a (possibly non-orientable) surface endowed with a dianalytic structure.

The moduli space of smooth real curves has several connected components, classified by the number of connected components of the fixed locus of the anti-holomorphic involution \cite{GHJ01}. In this paper, we restrict to the component with empty fixed locus, equivalently the component consisting of Klein surfaces without boundary. Let $\mc{K}_{g,n}$ denote the moduli space of smooth genus $g$ Klein surfaces\footnote{
	We use the convention that the genus $g$ of a compact non-orientable surface is defined by $\chi=2-2g$, rather than the non-orientable genus $\tilde g$ defined by $\chi=1-\tilde g$. In \cite{GHJ01}, the genus is taken to be the non-orientable genus.
} with $n$ labelled marked points, together with a choice of local orientation at each marked point. For fixed $L\in\R_{>0}^n$, Goulden, Harer, and Jackson \cite{GHJ01} prove a homeomorphism (see also \cite{Bra12})
\begin{equation}\label{eq:iso:nor}
	\Mod_{g,n}^-(L) \cong \frac{\mc{K}_{g,n}}{\Z_2^n}.
\end{equation}
Again, this identification is meant as an isomorphism of topological real orbifolds. The quotient reflects the convention on local orientations at the marked points. The proof in \cite{GHJ01} again uses Strebel differentials, now on real curves, together with the fact that Strebel trajectories are invariant under the anti-holomorphic involution.

\subsection{A measure of non-orientability}\label{sec:MON}
Our next goal is to define a measure of non-orientability on metric M\"obius graphs, that is, a function $\rho$ quantifying how ``non-orientable'' a metric M\"obius graph is. Such a function should detect orientability: at $b=0$ it should coincide with the indicator function of the locus of orientable metric M\"obius graphs; at $b=1$ it should be the constant function~$1$; and for intermediate values $0<b<1$ it should interpolate continuously between these two cases.

In this section (and only in this section), we consider M\"obius graphs without face labelling and with no restriction on valency: one- and two-valent vertices are allowed. A metric is still an assignment of a positive real number to each edge. Thus, for such a M\"obius graph $G$, we seek a map
\begin{equation}
	\rho_G \colon \R_{>0}^{E(G)} \longrightarrow \R[b]
\end{equation}
with the properties above, following Chapuy and Do\l\k{e}ga \cite{CD22}. However, their algorithmic definition of $\rho_G$ is formulated for non-orientable maps with an ordered set of edges, whereas our graphs do not come equipped with such an ordering. To remedy this, we introduce the notion of a root and average over all possible choices of root. The definition of $\rho_G$ then proceeds recursively in the spirit of Tutte: at each step we remove the rooted edge, thereby simplifying the topology, and assign a weight depending on $b$ and the metric, in accordance with the topological type of the root removal.

\subsubsection{Rooting and induced face-orientations}
\label{ssub:rooting}

\begin{wrapfigure}[14]{R}{0.35\textwidth}
	\vspace{-1.75\baselineskip}
	\centering
	\begin{tikzpicture}[x=1pt,y=1pt,scale=1.2]
	  \draw (131.994, 747.924) .. controls (128, 748) and (128, 756) .. (124, 756);
	  \draw [white, line width=2mm] (124, 748) .. controls (128, 748) and (128, 756) .. (131.9948, 756.04);
	  \draw [postaction={
	    decorate,
	    decoration={markings, mark= at position 0.51 with {\arrow[BrickRed]{stealth}}}}] (102.4867, 755.987) -- (124, 756);
	  \node [text=BrickRed] at (112, 761) {\footnotesize$h_1$};
	  \draw (131.994, 747.924) -- (153.5073, 747.937);
	  \draw [postaction={
	    decorate,
	    decoration={markings, mark= at position 0.51 with {\arrow{stealth}}}}] (153.5073, 747.937) .. controls (152.9095, 740.353) and (150.6293, 733.095) .. (146.6667, 728) .. controls (137.3333, 716) and (118.6667, 716) .. (109.3333, 728) .. controls (105.3715, 733.094) and (103.0914, 740.35) .. (102.4889, 747.985);
	  \node at (128, 725) {\footnotesize$h_2$};
	  \draw [postaction={
	    decorate,
	    decoration={markings, mark= at position 0.51 with {\arrow{stealth}}}}] (102.4889, 747.985) -- (124, 748);
	  \draw (124, 748) .. controls (128, 748) and (128, 756) .. (131.9948, 756.04) -- (153.5081, 756.053);
	  \draw [postaction={
	    decorate,
	    decoration={markings, mark= at position 0.51 with {\arrow{stealth}}}}] (153.5081, 756.053) .. controls (152.9115, 763.641) and (150.631, 770.903) .. (146.6667, 776) .. controls (137.3333, 788) and (118.6667, 788) .. (109.3333, 776) .. controls (105.3684, 770.902) and (103.0878, 763.639) .. (102.4867, 755.987);
	  \node at (112, 742) {\footnotesize$h_3$};
	  \node at (128, 780) {\footnotesize$h_4$};
	  \node [text=RoyalBlue] at (128, 707) {\footnotesize$\bar{h}_2 = h_5$};
	  \node at (128, 798) {\footnotesize$h_6$};
	  \draw [postaction={
	    decorate,
	    decoration={markings, mark= at position 0.51 with {\arrow[RoyalBlue]{stealth reversed}}}}] (160.9998, 752.1415) .. controls (161.0257, 741.0944) and (158.0257, 730.0314) .. (152, 722.6667) .. controls (140, 708) and (116, 708) .. (104, 722.6667) .. controls (98.0257, 729.9686) and (95.0258, 740.9057) .. (95.0002, 751.8585);
	  \draw [postaction={
	    decorate,
	    decoration={markings, mark= at position 0.51 with {\arrow{stealth}}}}] (160.9998, 752.1415) .. controls (160.9742, 763.0943) and (157.9743, 774.0314) .. (152, 781.3333) .. controls (140, 796) and (116, 796) .. (104, 781.3333) .. controls (97.9743, 773.9686) and (94.9743, 762.9056) .. (95.0002, 751.8585);
	\end{tikzpicture}
	\caption{
		The face-orientation algorithm for a rooted M\"obius graph. The root is depicted in red; the first oriented half-edge in the second face is depicted in blue.
	}
	\label{fig:halfedge:ordering}
\end{wrapfigure}
A \emph{root} is the choice of one of the two sides of the ribbon corresponding to an edge, referred to as a \emph{half-edge}, together with an orientation of that half-edge. Graphically, a root is depicted on the topological realisation of a M\"obius graph by a small arrow drawn on the half-edge. Clearly, for any fixed edge there are four possible roots. In what follows, we denote a root by $r$, the half-edge and edge on which it lies by $h_r$ and $e_r$, respectively, and the set of all roots of a given graph $G$ by $R(G)$.

We now describe an algorithm that assigns an ordering to the set of half-edges of a connected rooted M\"obius graph $(G,r)$, and hence induces an orientation of each face. First, the root $r$ determines an ordering of the half-edges along the face containing it, say $(h_1 \coloneqq h_r, h_2, \ldots, h_m)$. To extend this ordering to all half-edges of the graph, we proceed as follows. Starting from the half-edge $h_1$, we traverse this face until we encounter a half-edge $h_i$ whose opposite half-edge $\bar{h}_i$ has not yet been assigned an order and therefore lies on a new face. We then declare $\bar{h}_i$ to be $h_{m+1}$, and assign to $h_{m+1}$ the orientation opposite to that of $h_i$. Using the orientation of $h_{m+1}$, we extend the ordering to the half-edges of this new face. Since $G$ is connected, this process eventually assigns an ordering to all half-edges of $G$. The ordering of half-edges obtained in this way induces an orientation of every face of $G$. Moreover, if $G$ is orientable, then these face orientations are compatible with the orientation of the associated surface, which is canonically determined by the root.

In summary, a root on a connected M\"obius graph canonically orients all of its faces; when $G$ is orientable, this agrees with the canonical surface orientation. See \cref{fig:halfedge:ordering} for a graphical illustration of this algorithm.

\subsubsection{Measure of non-orientability: definition and properties}
\label{ssub:MON}
We now define the measure of non-orientability by averaging over all possible rootings of a given M\"obius graph.

\begin{definition}\label{def:MON}
	Let $G$ be a M\"obius graph without face labelling and without any restriction on valency. The \emph{measure of non-orientability (MON)} is the function $\rho_G \colon \R_{>0}^{E(G)} \to \R[b]$ defined recursively on the number of edges by
	\begin{equation}\label{eq:def:rho}
		\rho_G(\ell;b)
		\coloneqq
		\frac{1}{\sum_{r \in R(G)} \ell_{e_r}}
		\sum_{r \in R(G)}
			\ell_{e_r} \, w_r(b) \, \rho_{G-e_r}(\ell - \ell_{e_r};b),
	\end{equation}
	where $G-e_r$ denotes the graph obtained from $G$ by removing the rooted edge $e_r$, endowed with the induced metric $(\ell - \ell_{e_r}) \in \R_{>0}^{E(G)-1}$ (that is, the metric obtained by deleting the entry $\ell_{e_r}$ from $\ell \in \R_{>0}^{E(G)}$). The weight $w_r(b)$ depends on the topological type of the root removal, and there are four mutually exclusive cases (see \cref{fig:weights}):
	\begin{enumerate}[label=(\roman*)]
		\item\label{D:disc}
		$G - e_r$ is disconnected. Set $w_r(b) = 1$.

		\item\label{R}
		$G - e_r$ is connected and has one fewer face than $G$. Set $w_r(b) = 1$.

		\item\label{E}
		$G - e_r$ is connected and has the same number of faces as $G$. Set $w_r(b) = b$.
		
		\item\label{D:conn}
		$G - e_r$ is connected and has one more face than $G$. Denote by $h'$ and $h''$ the half-edges immediately following and preceding the root, respectively; these inherit an orientation from the root. On $G-e_r$, take $h'$ with its orientation as the new root, which induces a new orientation on $h''$. If the old and new orientations of $h''$ agree, set $w_r(b) = 1$; if they disagree, set $w_r(b) = b$. See \cref{fig:G:Gtilde} for an example.
	\end{enumerate}
	For disconnected graphs, $\rho$ is defined multiplicatively as the product of the MONs of their connected components. For the base case, namely the graph $G=\bullet$ with no edges, set $\rho_{\bullet} \coloneqq 1$. Unless we need to emphasise the refinement parameter, we will drop the dependence on $b$ from the notation and simply write $\rho_G(\ell)$ and $w_r$.
\end{definition}

\begin{figure}[b]
	\centering

	\caption{
		The four scenarios arising from a root-removal operation, together with their weights appearing in the MON.
	}
	\label{fig:weights}
\end{figure}

\begin{figure}[t]
	\centering
	%
	\caption{
		Four examples of weight calculations in the situation of \labelcref{D:conn}. 
		The first row displays two M\"obius graphs of type $(1,1)$ (a torus and a Klein bottle), as well as two M\"obius graphs of type $(\tfrac{3}{2},1)$. 
		The root $h$ is shown in red, and the induced orientations on the half-edges immediately following and preceding the root, $h'$ and $h''$, are shown in blue and green, respectively. 
		The second row displays the graphs obtained after removing the rooted edge, with $h'$ taken as the new root; the new orientation on $h''$ induced by the new root is shown in purple.
	}
	\label{fig:G:Gtilde}
\end{figure}

It is easy to verify that the MON is well defined (i.e.~invariant under flips at vertices). Moreover, it is $\Aut(G)$-invariant, and therefore descends to a continuous function
\begin{equation}
	\rho \colon \Mod_{g,n}(L) \longrightarrow \R[b],
	\qquad
	\bm{G} \longmapsto \rho(\bm{G}),
\end{equation}
where $\rho(\bm{G})$ is shorthand for $\rho_G(\ell_G)$. We record here several basic properties of the MON, whose proofs are straightforward. These properties are particularly useful for practical computations, as they allow one to significantly simplify the combinatorics.

\begin{properties}\label{rem:removinglowvalent}
	The MON $\rho_G$ satisfies the following properties:
	\begin{description}
		\item[MON1]\label{prop:MON1}
		\emph{removing one-valent vertices.}
		The MON is unchanged by removing all tadpoles (i.e.~edges attached to a one-valent vertex).

		\item[MON2]\label{prop:MON2}
		\emph{removing two-valent vertices.}
		The MON is preserved under the removal of two-valent vertices, provided that the new edge obtained by merging two edges $e_1$ and $e_2$ is assigned length $\ell_{e_1}+\ell_{e_2}$ and colour equal to the sum of their colours in $\Z_2$.

		\item[MON3]\label{prop:MON3}
		\emph{orientable graphs.}
		If $G$ is orientable, then case~\labelcref{D:conn} always assigns weight $w_r=1$. Consequently, for orientable $G$ we have $\rho_G(\ell)=1$ for all metrics~$\ell$.

		\item[MON4]\label{prop:MON4}
		\emph{twisted vs.\ untwisted.}
		Suppose we are in case~\labelcref{D:conn} and that the assigned weight is $w_r=1$ (respectively, $w_r=b$). Construct a new graph $\tilde{G}$ that differs only in that the root has the opposite colour, and denote the resulting root by~$\tilde{r}$ (see \cref{fig:weights}). Then the corresponding weight is $w_{\tilde{r}}=b$ (respectively, $w_{\tilde{r}}=1$). In other words,
		\begin{equation}\label{eq:1:plus:b}
			w_r + w_{\tilde{r}} = 1 + b .
		\end{equation}
		Notice that the topological realisations of $G$ and $\tilde{G}$ may or may not be homeomorphic; see \cref{fig:G:Gtilde} for examples.
	\end{description}
\end{properties}

\begin{remark}
	One can obtain a simpler definition of the MON for unrooted graphs by summing over edges in \labelcref{eq:def:rho}, rather than over rootings. In that formulation, case~\labelcref{D:conn} (the only case in which the rooting is used) assigns the weight $\frac{1+b}{2}$, unless $G$ or $\tilde{G}$ is orientable. In the latter situation, one assigns weight $1$ to the orientable graph and $b$ to the other. An even simpler option is to always assign the weight $\frac{1+b}{2}$ in case~\labelcref{D:conn}, at the cost of losing the property that $\rho_G=1$ for orientable~$G$.
\end{remark}

Below we list all connected M\"obius graphs (up to flips) with zero, one, and two edges, together with their MONs. More intricate examples are given in \cref{app:MON}. We omit the metric whenever the MON is independent of it.
\begin{center}
\begin{tikzpicture}[scale=.65]
	\node at (0,0) {\scriptsize$\bullet$};
	\node at (0,-.85) {$1$};

	\node at (0,-.85) {$\vphantom{\frac{\ell_1 b^2 + \ell_2 b}{\ell_1 + \ell_2}}$};

	\draw[densely dotted] (.9,.85) -- (.9,-1.2);
	\begin{scope}[xshift=1.8cm]
		\draw (-.3,0) -- (.3,0);
		\node at (-.3,0) {\scriptsize$\bullet$};
		\node at (.3,0) {\scriptsize$\bullet$};
		\node at (0,-.85) {$1$};

		\begin{scope}[xshift=1.5cm]
			\draw (0,0) circle (.3cm);
			\node at (-.3,0) {\scriptsize$\bullet$};
			\node at (0,-.85) {$1$};
		\end{scope}

		\begin{scope}[xshift=3cm]
			\draw[BurntOrange] (0,0) circle (.3cm);
			\node at (-.3,0) {\scriptsize$\bullet$};
			\node at (0,-.85) {$b$};
		\end{scope}

		\node at (0,-.85) {$\vphantom{\frac{\ell_1 b^2 + \ell_2 b}{\ell_1 + \ell_2}}$};

		\draw[densely dotted] (4.2,.85) -- (4.2,-1.2);
	\end{scope}
	\begin{scope}[xshift=7.5cm]
		\draw (-.6,0) -- (0,0) -- (.6,0);
		\node at (-.6,0) {\scriptsize$\bullet$};
		\node at (0,0) {\scriptsize$\bullet$};
		\node at (.6,0) {\scriptsize$\bullet$};
		\node at (0,-.85) {$1$};

		\begin{scope}[xshift=2cm]
			\draw (-.6,0) -- (0,0);
			\draw (.3,0) circle (.3cm);
			\node at (-.6,0) {\scriptsize$\bullet$};
			\node at (0,0) {\scriptsize$\bullet$};
			\node at (0,-.85) {$1$};
		\end{scope}

		\begin{scope}[xshift=4cm]
			\draw (-.6,0) -- (0,0);
			\draw[BurntOrange] (.3,0) circle (.3cm);
			\node at (-.6,0) {\scriptsize$\bullet$};
			\node at (0,0) {\scriptsize$\bullet$};
			\node at (0,-.85) {$b$};
		\end{scope}

		\begin{scope}[xshift=5.5cm]
			\draw (0,0) circle (.3cm);
			\node at (-.3,0) {\scriptsize$\bullet$};
			\node at (.3,0) {\scriptsize$\bullet$};
			\node at (0,-.85) {$1$};
		\end{scope}

		\begin{scope}[xshift=7cm]
			\draw[BurntOrange] (.3,0) arc (0:180:.3cm);
			\draw (-.3,0) arc (180:360:.3cm);
			\node at (-.3,0) {\scriptsize$\bullet$};
			\node at (.3,0) {\scriptsize$\bullet$};
			\node at (0,-.85) {$b$};
		\end{scope}

		\begin{scope}[xshift=8.5cm]
			\draw (-.3,0) circle (.3cm);
			\draw (.3,0) circle (.3cm);
			\node at (0,0) {\scriptsize$\bullet$};
			\node at (0,-.85) {$1$};
		\end{scope}

		\begin{scope}[xshift=10cm]
			\draw (-.3,0) circle (.3cm);
			\draw[BurntOrange] (.3,0) circle (.3cm);
			\node at (0,0) {\scriptsize$\bullet$};
			\node at (0,-.85) {$b$};
		\end{scope}

			\begin{scope}[xshift=11.5cm]
			\draw[BurntOrange] (-.3,0) circle (.3cm);
			\draw[BurntOrange] (.3,0) circle (.3cm);
			\node at (0,0) {\scriptsize$\bullet$};
			\node at (0,-.85) {$b^2$};
		\end{scope}

		\begin{scope}[xshift=13cm]
			\draw (0.15,.3) circle (.3cm);
			\draw (-.15,0) circle (.3cm);
			\node at (-.15,.3) {\scriptsize$\bullet$};
			\node at (0,-.85) {$1$};
		\end{scope}

		\begin{scope}[xshift=14.5cm]
			\draw[BurntOrange] (0.15,.3) circle (.3cm);
			\draw (-.15,0) circle (.3cm);
			\node at (-.15,.3) {\scriptsize$\bullet$};
			\node at ($(0.15,.3) + (45:.55)$) {\tiny$\ell_2$};
			\node at ($(-.15,0) + (180:.5)$) {\tiny$\ell_1$};
			\node at (0,-.85) {$\frac{\ell_1 b^2 + \ell_2 b}{\ell_1 + \ell_2}$};
		\end{scope}

		\begin{scope}[xshift=16cm]
			\draw[BurntOrange] (0.15,.3) circle (.3cm);
			\draw[BurntOrange] (-.15,0) circle (.3cm);
			\node at (-.15,.3) {\scriptsize$\bullet$};
			\node at (0,-.85) {$b$};
		\end{scope}
	\end{scope}
\end{tikzpicture}
\end{center}

We conclude by collecting the main properties of the MON.

\begin{proposition}\label{prop:MON}
	The MON $\rho_G$ satisfies the following properties:
	\begin{enumerate}[label=(\arabic*)]
		\item\label{b0} It detects orientability: setting $b=0$, it vanishes if $G$ is non-orientable and equals $1$ if $G$ is orientable.

		\item\label{b1} Setting $b=1$ yields the constant function $1$.

		\item\label{poly:b} It is a polynomial in $b$ of degree at most $2g$ with positive real coefficients. If the metric on $G$ is integral, i.e.~all edge lengths are integers, then it is a polynomial in $b$ with positive rational coefficients.

		\item\label{rationality} It is a rational function of the edge lengths, homogeneous of degree zero. Moreover, for any fixed $b \in \R$, it is bounded on $\R_{>0}^{E(G)}$.
	\end{enumerate}
\end{proposition}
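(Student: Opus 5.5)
All four properties will be proved by induction on the number of edges $|E(G)|$, using the recursive definition \eqref{eq:def:rho}. The base case $G=\bullet$ has $\rho_\bullet=1$, and it satisfies everything trivially (genus $0$, degree $0$). For disconnected graphs all properties are stable under products: degrees in $b$ add, genera add, and rationality and boundedness are preserved. So it suffices to treat connected $G$ and assume the statements for every graph with fewer edges, in particular for every component of $G-e_r$.

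\emph{Property \ref{b1}.} At $b=1$ every weight $w_r$ equals $1$. By induction, $\rho_{G-e_r}=1$, so \eqref{eq:def:rho} becomes a weighted average of ones, which is $1$.

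\emph{Property \ref{b0}.} If $G$ is orientable, then so is $G-e_r$, and \textbf{MON3} gives $\rho_G=1$. If $G$ is non-orientable, we show that every term in \eqref{eq:def:rho} vanishes at $b=0$.
\begin{itemize}
	\item If $G-e_r$ is non-orientable (in case (iv), or in case (i) with a non-orientable component), the induction hypothesis kills the term.
	\item Otherwise $G-e_r$ is orientable, so adding back $e_r$ must create the non-orientability. Then $e_r$ is a twisted edge relative to the orientation of $G-e_r$. A twisted edge joining two orientable pieces in case (i) can be untwisted by flipping one component, which contradicts non-orientability. So we are in case (iii) or (iv). Case (iii) carries the weight $b$. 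In case (iv), the orientation comparison of $h''$ must disagree, because otherwise the orientation of $G-e_r$ would extend across $e_r$; hence $w_r=b$.
	\item Case (ii) with $G-e_r$ orientable and $G$ non-orientable cannot occur, since an edge joining a face to itself only in a way that reduces the face count preserves orientability.
\end{itemize}
I expect this last point, that cases (ii) and (i) preserve orientability, to be the main obstacle. The plan is to argue with the local picture: in case (ii) the edge $e_r$ lies on two distinct faces of $G$, and the surface $G-e_r$ is obtained from $\Sigma_G$ by cutting along an arc joining two different boundary components. Conversely, $\Sigma_G$ is obtained from $\Sigma_{G-e_r}$ by attaching a band between two points of the same boundary circle. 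That attachment produces one fewer face exactly when it is orientation-compatible, i.e.\ untwisted relative to the circle. An incompatible band, which is what makes the surface non-orientable, keeps the number of faces fixed, and that is case (iii).

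\emph{Property \ref{poly:b}.} Polynomiality and positivity follow from the induction, because the weights are $1$ or $b$ and the coefficients $\ell_{e_r}/\sum\ell$ are positive. They are rational whenever the lengths are integers. For the degree bound, compare $\chi$:
\begin{itemize}
	\item Case (i) splits $2g$ additively, since $\chi$ increases by $1$ and there is one more component.
	\item Case (ii) gives genus $g'$ with $2-2g'-(n-1)=2-2g-n+1$, so $g'=g$ and the weight $1$ adds no degree.
	\item Case (iii) gives $2g'=2g-1$ with weight $b$, so the degree is at most $2g$.
	\item Case (iv) gives $2g'=2g-2$ with a weight of degree at most $1$.
\end{itemize}

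\emph{Property \ref{rationality}.} Rationality and homogeneity of degree zero are inductive: numerator and denominator in \eqref{eq:def:rho} have the same degree shift. For boundedness at fixed $b$, note that $\rho_G$ is a convex combination of the quantities $w_r(b)\rho_{G-e_r}$. By induction these are bounded by $\max(1,|b|)^{|E|}$, which gives a uniform bound on $\mathbb{R}_{>0}^{E(G)}$.
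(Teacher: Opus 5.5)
Your proof is correct and follows essentially the same route as the paper: induction on the number of edges with the same case analysis for each property. The one difference is in case (iv), where you argue directly that agreeing orientations of $h''$ would let the orientation of $G-e_r$ extend across $e_r$, and you also spell out why cases (i) and (ii) preserve orientability; the paper instead passes to the partner graph $\tilde G$ and uses $w_r+w_{\tilde r}=1+b$. One small slip: attaching an orientation-compatible band between two points of the same boundary circle produces one \emph{more} face, not one fewer, though your conclusion about case (ii) is unaffected.
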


\begin{proof}
	Property~\labelcref{b0} follows by induction on the number of edges of $G$.
	Removing an edge of type~\labelcref{D:disc} or~\labelcref{R} does not change orientability, and the corresponding weights $w_r$ are equal to~$1$. Thus, in these cases the MON correctly detects orientability.
	Edges of type~\labelcref{E} cannot occur when $G$ is orientable, since this move decreases the genus by $\tfrac12$. As the corresponding weight is $w_r=b$, the claim follows in this case as well.

	It remains to consider the removal of an edge $e_r$ of type~\labelcref{D:conn}.
	If $G-e_r$ is non-orientable, then $G$ is necessarily non-orientable, and the induction hypothesis immediately yields the claim for~$G$.
	Assume instead that $G-e_r$ is orientable. If $G$ is also orientable, then orientability is preserved and the corresponding weight is~$1$.
	Conversely, if $G$ is non-orientable, then $\tilde{G}$ must be orientable. Hence the weight associated with removing $e_r$ from $G$ is $w_r=b$ by \cref{eq:1:plus:b}, and the desired property follows in this case as well.

	Property~\labelcref{b1} also follows by induction on the number of edges. Since $w_r(b)\big|_{b=1}=1$, we have
	\begin{equation}
		\rho_G(\ell)\big|_{b=1}
		=
		\frac{1}{\sum_{r \in R(G)} \ell_{e_r}}
		\sum_{r \in R(G)} \ell_{e_r}\,
		\underbrace{\rho_{G-e_r}(\ell - \ell_{e_r})\big|_{b=1}}_{=1}
		=
		1.
	\end{equation}

	For \labelcref{poly:b}, polynomiality in $b$ (with real or rational coefficients, depending on the metric) is immediate from the definition. As for the degree, note that contributions to the degree in $b$ arise only in cases~\labelcref{E} and~\labelcref{D:conn} in \cref{def:MON}. The genus decreases by $\tfrac12$ in case~\labelcref{E}, and by $1$ in case~\labelcref{D:conn}. Hence the degree is at most $2g$, with the bound governed by the former case.

	Finally, for \labelcref{rationality}, the fact that $\rho_G$ is a rational function of the edge lengths, homogeneous of degree zero, follows directly from the definition. For boundedness, set $M_G \coloneqq \sup_{\ell \in \R_{>0}^{E(G)}} |\rho_G(\ell)|$. Then
	\begin{equation}
		|\rho_G(\ell)|
		\le
		\frac{1}{\sum_{r \in R(G)} \ell_{e_r}}
		\sum_{r \in R(G)} \ell_{e_r}\, \max(1,|b|)\, M_{G-e_r}
		\le
		\max(1,|b|)\, \max_{r \in R(G)} M_{G-e_r},
	\end{equation}
	so $M_G \le \max(1,|b|)\max_{r} M_{G-e_r}$. The claim follows by induction on the number of edges.
\end{proof}

\section{The lattice point recursion}
\label{sec:lattice:point}

An interesting feature of the moduli space of metric M\"obius graphs is that it carries a natural integral structure, given by M\"obius graphs with integral edge lengths. This allows us to define a lattice point count weighted by the MON, which we call the \emph{refined lattice point count}. The main result of this section is the proof of \cref{thm:lattice:rec}, namely a recursion formula for the refined lattice point count.

\subsection{Integral structure and refined lattice point count}
The finite subset of $\Mod_{g,n}(L)$ consisting of all metric M\"obius graphs whose edge lengths are positive integers (rather than positive reals) defines an integral structure:
\begin{equation}
	\ZMod_{g,n}(L)
	\coloneqq
	\Set{
		\bm{G} \in \Mod_{g,n}(L) \mid \ell_G(e) \in \Z_{>0}
	}.
\end{equation}
It carries an orbifold structure induced by the integral polytopes $P_{G}^{\Z}(L) \coloneqq P_{G}(L) \cap \Z^{E(G)}$ modulo the automorphism group. Note that $\ZMod_{g,n}(L)$ is empty unless $\sum_{i=1}^n L_i$ is an even positive integer. Indeed, the sum of the perimeters is always twice the total edge length, namely
$\sum_{i=1}^n L_i = 2 \sum_{e \in E(G)} \ell_G(e)$.

We can now define the refined lattice point count of the moduli space of metric M\"obius graphs as the orbifold count weighted by the MON.

\begin{definition}\label{def:Ngn}
	For $L=(L_1,\ldots,L_n)\in \Z_{>0}^n$, define the \emph{refined lattice point count} by
	\begin{equation}
		N_{g,n}(L;b)
		\coloneqq
		\sum_{\bm{G} \in \ZMod_{g,n}(L)}
			\frac{\rho(\bm{G};b)}{|\Aut(\bm{G})|}.
	\end{equation}
	For notational simplicity, we omit the dependence on $b$.
\end{definition}

By the orbit-stabiliser theorem, this can be rewritten as
\begin{equation}
	N_{g,n}(L)
	=
	\sum_{G \in \MG_{g,n}}
		\frac{1}{|\Aut(G)|}
		\sum_{\ell \in P_G^\Z(L)} \rho_G(\ell).
\end{equation}
The following section is devoted to the computation of this count.

\subsection{The lattice point recursion}
We prove a recursion for the refined lattice point count, \cref{thm:lattice:rec}, which determines the $N_{g,n}$ uniquely from the initial conditions $N_{0,3}$, $N_{\frac{1}{2},2}$, and $N_{1,1}$ computed explicitly in \cref{app:base:top}. We proceed in three steps:
\begin{enumerate}
	\item
	Define ciliated integral metric M\"obius graphs together with a MON on them, and relate the corresponding counting problem to the original unciliated one.

	\item
	Establish, via a Tutte-like argument, a symmetric form of the lattice point recursion in which all boundary components are treated on the same footing.

	\item
	Prove that the symmetric recursion is equivalent to the asymmetric one (as presented in \cref{thm:lattice:rec}), in which the first boundary component is distinguished.
\end{enumerate}
See \cite{Nor10,DN11,CMB11} for similar arguments in the orientable setting.

\subsubsection{Ciliation}
Consider an integral metric M\"obius graph $\bm G$. Recall that a root $r$ is the choice of a half-edge of the graph $G$, together with an orientation of that half-edge. An integral metric can be visualised by subdividing each ribbon of $\bm G$ (in the topological realisation) into as many unit intervals as the length of the corresponding edge. A \emph{ciliation} on an integral metric M\"obius graph $\bm G$ consists of a choice of root, together with a choice of a unit-length segment on the rooted half-edge. Clearly, for a fixed $\bm G$ there are $4 \sum_{e \in E(G)} \ell_e$ possible ciliations. We represent a cilium by an arrow drawn on a unit-length segment of a half-edge, see \cref{fig:ciliation}.

\begin{figure}[ht]
	\begin{tikzpicture}[x=1pt,y=1pt,scale=.5]
	  \draw(64, 672) .. controls (106.6667, 704) and (154.6667, 720) .. (208, 720);
	  \draw[opacity=.5](60, 684) .. controls (102.6667, 716) and (150.6667, 732) .. (204, 732);
	  \draw(56, 696) .. controls (98.6667, 728) and (146.6667, 744) .. (200, 744);
	  \node at (60, 684) {\tiny$\bullet$};
	  \node at (204, 732) {\tiny$\bullet$};
	  \node at (112, 750) {$\ell_e = 3$};
	  \node at (272, 696) {$\Longrightarrow$};
	  \draw(352, 672) .. controls (394.6667, 704) and (442.6667, 720) .. (496, 720);
	  \draw[opacity=.5](348, 684) .. controls (390.6667, 716) and (438.6667, 732) .. (492, 732);
	  \node at (348, 684) {\tiny$\bullet$};
	  \node at (492, 732) {\tiny$\bullet$};
	  \draw[densely dotted](395.0018, 697.9948) .. controls (392.332, 705.9934) and (388.5897, 713.4778) .. (383.7749, 720.4482);
	  \draw[densely dotted](444.445, 714.6668) .. controls (443.0234, 722.9282) and (440.3565, 730.9282) .. (436.4442, 738.6666);
	  \draw(344, 696) .. controls (356.7864, 705.5898) and (370.0518, 713.7427) .. (383.7961, 720.4586);
	  \draw[postaction={
	    decorate,
	    decoration={markings, mark= at position 0.51 with {\arrow{stealth}}}}](383.7961, 720.4586) .. controls (400.6209, 728.6797) and (418.1634, 734.7476) .. (436.4237, 738.6622);
	  \draw(436.4237, 738.6622) .. controls (453.0227, 742.2207) and (470.2148, 744) .. (488, 744);
	  \draw(352, 736) .. controls (394.6667, 768) and (442.6667, 784) .. (496, 784);
	  \draw[opacity=.5](348, 748) .. controls (390.6667, 780) and (438.6667, 796) .. (492, 796);
	  \node at (348, 748) {\tiny$\bullet$};
	  \node at (492, 796) {\tiny$\bullet$};
	  \draw[densely dotted](395.002, 761.995) .. controls (392.332, 769.9937) and (388.5897, 777.478) .. (383.775, 784.448);
	  \draw[densely dotted](444.445, 778.667) .. controls (443.0237, 786.9283) and (440.3567, 794.9283) .. (436.444, 802.667);
	  \draw[postaction={
	    decorate,
	    decoration={markings, mark= at position 0.51 with {\arrow{stealth}}}}](344, 760) .. controls (356.7867, 769.59) and (370.052, 777.743) .. (383.796, 784.459);
	  \draw(383.796, 784.459) .. controls (400.6207, 792.6797) and (418.1633, 798.7473) .. (436.424, 802.662);
	  \draw(436.424, 802.662) .. controls (453.0227, 806.2207) and (470.2147, 808) .. (488, 808);
	  \draw(352, 608) .. controls (394.6667, 640) and (442.6667, 656) .. (496, 656);
	  \draw[opacity=.5](348, 620) .. controls (390.6667, 652) and (438.6667, 668) .. (492, 668);
	  \node at (348, 620) {\tiny$\bullet$};
	  \node at (492, 668) {\tiny$\bullet$};
	  \draw[densely dotted](395.002, 633.995) .. controls (392.332, 641.9937) and (388.5897, 649.478) .. (383.775, 656.448);
	  \draw[densely dotted](444.445, 650.667) .. controls (443.0237, 658.9283) and (440.3567, 666.9283) .. (436.444, 674.667);
	  \draw(344, 632) .. controls (356.7867, 641.59) and (370.052, 649.743) .. (383.796, 656.459);
	  \draw(383.796, 656.459) .. controls (400.6207, 664.6797) and (418.1633, 670.7473) .. (436.424, 674.662);
	  \draw[postaction={
	    decorate,
	    decoration={markings, mark= at position 0.51 with {\arrow{stealth}}}}](436.424, 674.662) .. controls (453.0227, 678.2207) and (470.2147, 680) .. (488, 680);
	\end{tikzpicture}
	\caption{
		A ciliation on an edge of length $3$: an oriented half-edge together with a choice of unit segment, shown by an arrow. On the right we display three of the twelve possible ciliation choices on this edge. The remaining choices are obtained by reversing the arrow orientation, or by placing the cilium on the opposite half-edge in its six possible positions and orientations.
	}
	\label{fig:ciliation}
\end{figure}

Denote by $\Mod'_{g,n}(L)$ the moduli space of ciliated metric M\"obius graphs of type $(g,n)$. Let $P'_G(L)$ be the set of integral metrics on $G$ with fixed perimeters $L$, together with all possible ciliations. This consists of $4 \sum_{e \in E(G)} \ell_e = 2(L_1+\cdots+L_n)$ copies of the integral polytopes $P_G^{\Z}(L)$:
\begin{equation}
	P'_G(L) = P_G^\Z(L)^{\sqcup \, 2(L_1 + \cdots + L_n)}.
\end{equation}
We denote an element of $P'_G(L)$ by $(\ell,c)$, where $c$ is a ciliation with root $r_c$ placed on an edge $e_c$.

\subsubsection{Trimming}
Removing an edge from a M\"obius graph $G$ whose vertices all have valency at least $3$ may produce a graph with lower-valent vertices. Since our lattice point recursion is based on an edge-removal procedure, we need a way to eliminate any one- or two-valent vertices that can arise in this process. We call this operation \emph{trimming}.

Let $(G,\ell)$ be a metric M\"obius graph of type $(g,n)$ with $2g-2+n>1$, all vertices of valency at least $3$, and let $e$ be a distinguished edge. We define the \emph{trimmed} M\"obius graph $\tr(G-e)$, endowed with the metric $\tr(\ell-\ell_e)$, by the following two-step procedure:
\begin{enumerate}
 	\item If $e$ is part of a lollipop (i.e.~a single edge attached to a loop, see \cref{fig:lollipop}), remove the entire lollipop. Otherwise, remove only the edge $e$. The metric is obtained by deleting the lengths of all edges that are removed.

	\item
	Remove all two-valent vertices. If a two-valent vertex lies between edges $e_1$ and $e_2$, then the new edge created by its removal is assigned length $\ell_{e_1}+\ell_{e_2}$ and colour equal to the sum of the colours in $\Z_2$.
\end{enumerate}
We note that trimming is needed only when $e$ is part of a lollipop and/or incident to a trivalent vertex, see \cref{fig:trimming}. It is also straightforward to check that trimming is compatible with flips.

\begin{figure}[t]
	\begin{tikzpicture}[x=1pt,y=1pt,scale=.6]
	  \draw[densely dotted](128, 704) .. controls (138.6667, 736) and (138.6667, 768) .. (128, 800);
	  \draw(136, 752) -- (80, 752);
	  \draw(48, 752) circle[radius=32];
	  \node at (80, 752) {\tiny$\bullet$};
	  \node at (136, 752) {\tiny$\bullet$};
	  \node at (48, 794) {$e_c$};
	  \node at (108, 762) {$e_s$};
	  \draw[densely dotted](352, 704) .. controls (362.6667, 736) and (362.6667, 768) .. (352, 800);
	  \draw(360, 752) -- (304, 752);
	  \draw[BurntOrange](272, 752) circle[radius=32];
	  \node at (304, 752) {\tiny$\bullet$};
	  \node at (360, 752) {\tiny$\bullet$};
	  \node at (272, 794) {$e_c$};
	  \node at (332, 762) {$e_s$};
	\end{tikzpicture}
	\caption{
		A graphical representation of an untwisted and a twisted lollipop (left and right, respectively). The difference is whether the edge $e_c$ corresponding to the ``candy'' is twisted. The edge $e_s$ corresponding to the ``stick'' can always be untwisted by a flip. The candy has strictly positive length, whereas the stick may have length~$0$.
	}
	\label{fig:lollipop}
\end{figure}

\begin{figure}[t]
	\begin{tikzpicture}[x=1pt,y=1pt,scale=.4]
	  \draw(63.6593, 652.2551) -- (96, 704) -- (63.6584, 755.7442);
	  \draw[densely dotted](32, 736) .. controls (53.3333, 746.6667) and (74.6667, 762.6667) .. (96, 784);
	  \draw[densely dotted](32, 672) .. controls (53.3333, 661.3333) and (74.6667, 645.3333) .. (96, 624);
	  \draw(160, 704) circle[radius=32];
	  \draw(96, 704) -- (128, 704);
	  \node at (96, 704) {\tiny$\bullet$};
	  \node at (128, 704) {\tiny$\bullet$};
	  \node at (63.6593, 652.2551) {\tiny$\bullet$};
	  \node at (63.6584, 755.7442) {\tiny$\bullet$};
	  \draw[->](232, 704) -- (280, 704);
	  \draw[densely dotted](320, 736) .. controls (341.3333, 746.6667) and (362.6667, 762.6667) .. (384, 784);
	  \draw[densely dotted](320, 672) .. controls (341.3333, 661.3333) and (362.6667, 645.3333) .. (384, 624);
	  \node at (351.6593, 652.255) {\tiny$\bullet$};
	  \node at (351.6584, 755.744) {\tiny$\bullet$};
	  \draw (351.6593, 652.255) .. controls (373.2198, 686.7517) and (373.2195, 721.248) .. (351.6584, 755.744);
	  \node at (80, 738) [below left] {$e_1$};
	  \node at (80, 670) [above left] {$e_2$};
	  \node at (120, 720) {$e$};
	  \node at (380, 704) {$\hat{e}$};

	  \begin{scope}[xshift=18cm]
	    \draw[BurntOrange] (63.6593, 652.255) -- (96, 704);
	    \draw (96, 704) -- (63.6584, 755.744);
	    \draw[densely dotted](32, 736) .. controls (53.3333, 746.6667) and (74.6667, 762.6667) .. (96, 784);
	    \draw[densely dotted](32, 672) .. controls (53.3333, 661.3333) and (74.6667, 645.3333) .. (96, 624);
	    \node at (96, 704) {\tiny$\bullet$};
	    \node at (63.6593, 652.255) {\tiny$\bullet$};
	    \node at (63.6584, 755.744) {\tiny$\bullet$};
	    \node at (80, 738) [below left] {$e_1$};
	    \node at (80, 670) [above left] {$e_2$};
	    \node at (128, 717) {$e$};
	    \draw(96, 704) -- (160, 704);
	    \node at (160, 704) {\tiny$\bullet$};
	    \draw(160, 704) -- (195.0119, 753.0158);
	    \draw(160, 704) -- (209.723, 720.5746);
	    \draw(160, 704) -- (193.6005, 653.596);
	    \node at (193.6005, 653.596) {\tiny$\bullet$};
	    \node at (209.723, 720.5746) {\tiny$\bullet$};
	    \node at (195.0119, 753.0158) {\tiny$\bullet$};
	    \draw[densely dotted](184, 640) .. controls (189.3333, 650.6667) and (197.3333, 658.6667) .. (208, 664);
	    \draw[densely dotted](216, 712) .. controls (210.6667, 717.3333) and (208, 722.6667) .. (208, 728);
	    \draw[densely dotted](208, 744) .. controls (197.3333, 749.3333) and (189.3333, 757.3333) .. (184, 768);
	    \draw[densely dotted](320, 736) .. controls (341.3333, 746.6667) and (362.6667, 762.6667) .. (384, 784);
	    \draw[densely dotted](320, 672) .. controls (341.3333, 661.3333) and (362.6667, 645.3333) .. (384, 624);
	    \draw[BurntOrange] (351.659, 652.255) .. controls (373.2197, 686.7517) and (373.2193, 721.248) .. (351.658, 755.744);
	    \draw[->](240, 704) -- (288, 704);
	    \draw(448, 704) -- (483.012, 753.016);
	    \draw(448, 704) -- (497.723, 720.575);
	    \draw(448, 704) -- (481.601, 653.596);
	    \draw[densely dotted](472, 640) .. controls (477.3333, 650.6667) and (485.3333, 658.6667) .. (496, 664);
	    \draw[densely dotted](504, 712) .. controls (498.6667, 717.3333) and (496, 722.6667) .. (496, 728);
	    \draw[densely dotted](496, 744) .. controls (485.3333, 749.3333) and (477.3333, 757.3333) .. (472, 768);
	    \node at (351.6593, 652.255) {\tiny$\bullet$};
	    \node at (448, 704) {\tiny$\bullet$};
	    \node at (351.6584, 755.744) {\tiny$\bullet$};
	    \node at (481.6005, 653.596) {\tiny$\bullet$};
	    \node at (497.723, 720.5746) {\tiny$\bullet$};
	    \node at (483.0119, 753.0158) {\tiny$\bullet$};
	    \node at (380, 704) {$\hat{e}$};
	  \end{scope}
	\end{tikzpicture}
	\caption{
		On the left panel, the edge $e$ is part of a lollipop; on the right panel it is not. In both cases, the resulting edge $\hat{e}$ has length $\ell_{\hat{e}} = \ell_{e_1} + \ell_{e_2}$.
	}
	\label{fig:trimming}
\end{figure}

\subsubsection{Measure of non-orientability for ciliated graphs}
\label{ssub:mon:ciliated}
We define the MON for ciliated M\"obius graphs without face labelling, but with all vertices of valency at least $3$. Given a ciliated graph $(G,c,\ell)$ with root $r=r_c$, we define the MON $\rho'$ recursively by
\begin{equation}
	\rho'_{(G,r_c)}(\ell;b)
	\coloneqq
	w'_{r}(b) \, \rho_{\tr(G - e_r)}\bigl(\tr(\ell - \ell_{e_r})\bigr),
\end{equation}
where $G-e_r$ denotes the graph obtained from $G$ by removing the ciliated edge $e_r$. Note that the MON on the right-hand side is the unciliated one. Moreover, $\rho'$ depends only on the root, and not on the additional data of the ciliation.

The weight $w'_r$ differs slightly from the one in \cref{def:MON} for rooted graphs. When we remove the rooted edge $e_r$ from $G$, there are three mutually exclusive possibilities:
\begin{description}
	\item[$\boldsymbol{\mc{R}}$-type]
	If $\tr(G-e_r)$ is connected and has one fewer face than $G$, set $w'_r(b)=1$.

	\item[$\boldsymbol{\mc{E}}$-type]
	If $\tr(G-e_r)$ is connected and has the same number of faces as $G$, set $w'_r(b)=b$.

	\item[$\boldsymbol{\mc{D}}$-type]
	If $\tr(G-e_r)$ has one more face than $G$, we distinguish two cases:
	\begin{itemize}
		\item\textbf{$\boldsymbol{\mc{D}^{\rm c}}$-type:}
		If $\tr(G-e_r)$ is connected, define $w'_r$ as follows. The edge $e_r$ connects two faces of $\tr(G-e_r)$, say $f_1$ and $f_2$. On $\tr(G-e_r)$, place a new root $\bar r$ on the half-edge immediately following $r$, and let $f_1$ be the face containing $\bar r$. Use the orientation of $r$ to orient $f_2$. If this orientation agrees with the orientation of $f_2$ induced by the root $\bar r$ (via the algorithm of \cref{ssub:rooting}), set $w'_r(b)=1$; otherwise, set $w'_r(b)=b$.

		\item\textbf{$\boldsymbol{\mc{D}^{\rm d}}$-type:}
		If $\tr(G-e_r)$ is disconnected, set $w'_r(b)=1$.
	\end{itemize}
\end{description}

We define the ciliated refined lattice point count by
\begin{equation}\label{eq:cil:count}
	N'_{g,n}(L;b)
	\coloneqq
	\sum_{ G \in \MG_{g,n}} \frac{1}{|\Aut(G)|}
		\sum_{(\ell,c) \in P'_G(L)} \rho'_{(G,r_c)}(\ell;b),
\end{equation}
where we recall that $c$ is a ciliation with underlying root $r_c$. As before, we drop the dependence on $b$ from the notation in $N'_{g,n}(L)$.

\begin{lemma}\label{lem:cil:to:uncil}
	The refined lattice point counts in the ciliated and unciliated settings are related by
	\begin{equation}
		2\biggl( \sum_{i=1}^n L_i \biggr) N_{g,n}(L) = N'_{g,n}(L).
	\end{equation}
\end{lemma}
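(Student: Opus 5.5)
The plan is to prove the identity graph by graph and metric by metric. Fix $G \in \MG_{g,n}$ and $\ell \in P_G^{\Z}(L)$. Because $\rho'_{(G,r_c)}$ depends only on the root $r_c$, and each root $r$ carries exactly $\ell_{e_r}$ ciliations, the inner sum in \cref{eq:cil:count} restricted to this $\ell$ becomes
\begin{equation*}
	\sum_{r \in R(G)} \ell_{e_r}\, w'_r \,\rho_{\tr(G-e_r)}\bigl(\tr(\ell-\ell_{e_r})\bigr).
\end{equation*}
On the other hand, $\sum_{r \in R(G)} \ell_{e_r} = 4\sum_e \ell_e = 2\sum_i L_i$. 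Hence, by \cref{def:MON}, $2(\sum_i L_i)\rho_G(\ell) = \sum_r \ell_{e_r} w_r\, \rho_{G-e_r}(\ell-\ell_{e_r})$. It therefore suffices to prove the root-by-root identity
\begin{equation*}
	w'_r \,\rho_{\tr(G-e_r)}\bigl(\tr(\ell-\ell_{e_r})\bigr) = w_r\, \rho_{G-e_r}(\ell-\ell_{e_r}).
\end{equation*}
Summing this over roots and then over $(G,\ell)$ with the weight $1/|\Aut(G)|$ gives the lemma.

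\textbf{Generic case.} Suppose $e_r$ is not part of a lollipop. Then $\tr(G-e_r)$ is obtained from $G-e_r$ only by smoothing two-valent vertices. By MON2 the two MONs agree, and smoothing changes neither connectivity nor the face structure. The correspondence of weights is then as follows.
\begin{itemize}
	\item $\mc{R}$-type corresponds to case \labelcref{R}, and $\mc{E}$-type to case \labelcref{E}.
	\item $\mc{D}^{\rm d}$-type corresponds to case \labelcref{D:disc}; both have weight $1$.
	\item $\mc{D}^{\rm c}$-type corresponds to case \labelcref{D:conn}. The half-edge $h''$ preceding the root lies on the face $f_2$. So comparing the two orientations of $h''$ is the same as comparing the orientation of $f_2$ induced by $r$ with the one induced by the new root $\bar r = h'$ via the algorithm of \cref{ssub:rooting}. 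The two weights therefore coincide.
\end{itemize}

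\textbf{Lollipop case.} This is the step I expect to need the most care; I would handle it by direct inspection of the local picture.

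If $e_r$ is the candy, then $G-e_r$ has a tadpole (the stick). By MON1 its MON equals that of the graph with the stick removed, and after smoothing that is $\rho_{\tr(G-e_r)}$. For the weights, removing an untwisted candy merges the inner face with the surrounding one. This is case \labelcref{R} on the $G$ side and $\mc{R}$-type on the trimmed side, with weight $1$ on both. A twisted candy does not bound a separate face, so the face count is unchanged: this is case \labelcref{E} and $\mc{E}$-type, with weight $b$ on both.

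If $e_r$ is the stick, then $G-e_r$ is disconnected, so $w_r = 1$ by case \labelcref{D:disc}. By multiplicativity, $\rho_{G-e_r}$ is the MON of the remaining part times the MON of a single loop, which is $1$ if the loop is untwisted and $b$ if it is twisted. On the trimmed side the whole lollipop is removed and the result is connected. An untwisted lollipop loses its inner face, giving $\mc{R}$-type with $w'_r = 1$. A twisted lollipop keeps the face count, giving $\mc{E}$-type with $w'_r = b$. In both cases the products match.

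Finally, I would check that these local case distinctions are invariant under flips and that the weighted sum is compatible with $\Aut(G)$. This is immediate, since trimming is compatible with flips and $\rho$ is $\Aut(G)$-invariant. This completes the identity.
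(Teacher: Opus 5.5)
Your proposal is correct and follows the paper's proof. You use $\sum_{r}\ell_{e_r}=2\sum_i L_i$ to reduce to comparing $\sum_r \ell_{e_r}\, w'_r\,\rho_{\tr(G-e_r)}$ with $\sum_r \ell_{e_r}\, w_r\,\rho_{G-e_r}$, then apply MON2 in the generic/trivalent case and MON1 plus multiplicativity in the lollipop case. The differences are cosmetic: you match terms root by root, which does hold for the stick and the candy separately, whereas the paper groups the two lollipop contributions; and you spell out the weight correspondence (notably $\mc{D}^{\rm c}$-type versus case~(iv)) that the paper leaves implicit.
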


\begin{proof}
	Since the MON for ciliated graphs depends on the root but not on the specific location of the cilium, for a fixed graph $G$ the sum over ciliations in \cref{eq:cil:count} can be written as
	\begin{equation}
		\sum_{(\ell,c) \in P'_G(L)} \rho'_{(G,r_c)}(\ell)
		=
		\sum_{\ell \in P^{\Z}_G(L)} \sum_{r \in R(G)} \ell_{e_r}\, w'_{r}\, \rho_{\tr(G-e_r)}\bigl(\tr(\ell-\ell_{e_r})\bigr).
	\end{equation}
	On the other hand, the corresponding sum appearing in the definition of $N_{g,n}(L)$ can be rewritten as
	\begin{equation}
		\sum_{\ell \in P^{\Z}_G(L)} \rho_{G}(\ell)
		=
		\frac{1}{\sum_{r\in R(G)} \ell_{e_r}}
		\sum_{\ell \in P^{\Z}_G(L)} \sum_{r \in R(G)} \ell_{e_r}\, w_{r}\, \rho_{G-e_r}(\ell-\ell_{e_r}).
	\end{equation}
	Thus, using the identity $\sum_{r \in R(G)} \ell_{e_r} = 2 \sum_{i=1}^n L_i$, the lemma follows from the claim
	\begin{equation}\label{eq:claim}
		\sum_{r \in R(G)} \ell_{e_r}\, w'_{r}\, \rho_{\tr(G-e_r)}\bigl(\tr(\ell-\ell_{e_r})\bigr)
		\overset{?}{=}
		\sum_{r \in R(G)} \ell_{e_r}\, w_{r}\, \rho_{G-e_r}(\ell-\ell_{e_r}).
	\end{equation}
	In order to prove the claim, notice that if $G-e_r = \tr(G-e_r)$, then $w'_r=w_r$ and the corresponding terms in \cref{eq:claim} agree. Hence it suffices to consider the cases in which trimming is required, i.e. when $e_r$ belongs to a (twisted or untwisted) lollipop or is incident to a trivalent vertex. Consider, for instance, a twisted lollipop with stick $e_{s}$ and candy $e_{c}$. Then
	\begin{equation}
	\begin{split}
		\ell_{e_{s}} w_{e_{s}}\, \rho_{G-e_{s}} (\ell - \ell_{e_{s}}) + \ell_{e_{c}} w_{e_{c}}\, \rho_{G-e_{c}} (\ell - \ell_{e_{c}})
		&=
		\ell_{e_{s}}\, \rho_{G-e_{s}} (\ell - \ell_{e_{s}}) + \ell_{e_{c}}\, b\, \rho_{G-e_{c}} (\ell - \ell_{e_{c}}) \\
		&=
		(\ell_{e_{s}}+\ell_{e_{c}})\, b\, \rho_{G-e_{s}-e_{c}} (\ell - \ell_{e_{s}}-\ell_{e_{c}}) \\
		&=
		(\ell_{e_{s}}+\ell_{e_{c}})\, b\, \rho_{\tr(G-e_{s})} \!\bigl(\tr(\ell - \ell_{e_{s}})\bigr),
	\end{split}
	\end{equation}
	where the second equality uses that $\rho_{G-e_{s}} (\ell - \ell_{e_{s}}) = b\, \rho_{G-e_{s}-e_{c}} (\ell - \ell_{e_{s}}-\ell_{e_{c}})$ (since $G-e_{s}$ is disconnected and the M\"obius strip contributes a factor $b$), and that $\rho_{G-e_{c}} (\ell - \ell_{e_{c}}) = \rho_{G-e_{s}-e_{c}} (\ell - \ell_{e_{s}}-\ell_{e_{c}})$ (since $e_{s}$ is a tadpole in $G-e_{c}$ and removing tadpoles leaves the MON unchanged by property~\hyperref[prop:MON1]{MON1}). The last equality follows because removing two-valent vertices leaves the MON unchanged (by property~\hyperref[prop:MON2]{MON2}). The case of an untwisted lollipop is completely analogous and is therefore omitted.

	Finally, if $e_r$ is incident to a trivalent vertex, then removing all two-valent vertices in $G-e_r$ does not affect the MON (again by property~\hyperref[prop:MON2]{MON2}), and the claim follows.
\end{proof}

\subsubsection{The symmetric recursion}
Our next goal is to prove a recursion for the ciliated refined lattice point count via a Tutte-style decomposition obtained by removing the rooted edge. Using the notation $L_{[n]} \coloneqq (L_1,\ldots,L_n)$, root removal followed by trimming defines a surjective map
\begin{multline}\label{eq:pi:surj}
	\pi \colon \Mod'_{g,n}\big(L_{[n]}\big)
	\longrightarrow
	\bigsqcup_{\substack{i,j=1 \\ i \neq j}}^n \bigsqcup_{p=1}^{L_i + L_j - 1}
	\ZMod_{g,n-1}\big(p,L_{[n]\setminus \{i,j\}}\big) \sqcup
	\bigsqcup_{i=1}^n \bigsqcup_{p=1}^{L_i-1}
	\ZMod_{g-\frac{1}{2},n}\big(p,L_{[n]\setminus\{i\}}\big)
	\\
	\sqcup
	\bigsqcup_{i=1}^n \bigsqcup_{\substack{p,\, q = 1 \\ p+q < L_i}}^{L_i-2}
	\Bigg( \ZMod_{g-1,n+1} \big(p,q,L_{[n]\setminus \{i\}}\big)
	\sqcup \bigsqcup_{\substack{g_1+g_2=g \\ I_1 \sqcup I_2=[n] \setminus \{i\}}}
	\ZMod_{g_1,1+|I_1|} \big(p,L_{I_1}\big)
	\times
	\ZMod_{g_2,1+|I_2|} \big(q,L_{I_2}\big) \Bigg).
\end{multline}
The terms on the right-hand side correspond, in order, to root removals of type $\mc{R}$, $\mc{E}$, $\mc{D}^{\rm c}$, and $\mc{D}^{\rm d}$; the roles of $p$ and $q$ are explained case by case below. We then count the fibres of $\pi$ with MON and automorphism weights. Since the MON is constant on fibres and agrees with the MON downstairs up to a potential factor of $b$, summing over the base yields the ciliated count.

With this setup, we obtain the following symmetric version of the lattice point recursion.

\begin{theorem}\label{thm:lattice:rec:sym}
	For $2g-2+n > 1$, the refined lattice point count satisfies the recursion relation
	\begin{equation}\label{eq:lattice:rec:sym}
	\begin{split}
		2 \left( \sum_{i=1}^n L_i \right) N_{g,n}\big(L_{[n]}\big)
		&=
		\sum_{\substack{i,j = 1 \\ i \neq j}}^n \sum_{p >0}
			p \, [L_i+L_j-p]_+ \,
			N_{g,n-1}\big(p,L_{[n]\setminus \{i,j\}} \big) \\
		&\qquad
		+
		b \sum_{i=1}^n \sum_{p >0}
			p \, (L_i-1) \, [L_i-p]_+ \,
			N_{g-\frac{1}{2},n}\big(p,L_{[n]\setminus\{i\}}\big) \\
		&\qquad\qquad
		+
		\sum_{i=1}^n \sum_{ p,\,q >0 }
			p q \, [L_i-p-q]_+
			\Bigg(
				(1+b) N_{g-1,n+1}\big(p,q,L_{[n]\setminus \{i\}}\big) \\
		&\qquad\qquad\qquad\qquad 
				+
				2 \sum_{\substack{g_1+g_2 = g \\ I_1 \sqcup I_2 = [n]\setminus\{i\}}}
					N_{g_1,1+|I_1|}(p,L_{I_1})
					N_{g_2,1+|I_2|}(q,L_{I_2})
			\Bigg) ,
	\end{split}
	\end{equation}
	where $[x]_+ \coloneqq \max(x,0)$ is the ramp function. 
\end{theorem}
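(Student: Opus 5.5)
By \cref{lem:cil:to:uncil}, the left-hand side of \cref{eq:lattice:rec:sym} equals the ciliated count $N'_{g,n}(L)$. It therefore suffices to compute $N'_{g,n}(L)$ by summing over the fibres of the root-removal-and-trimming map $\pi$ of \cref{eq:pi:surj}. The weight of each fibre is the MON $\rho'$ divided by the automorphism factor. Since $\rho'_{(G,r_c)}=w'_r\,\rho_{\tr(G-e_r)}$, the MON upstairs is the MON of the image point times $w'_r\in\{1,b\}$. So for each target point $\bm{G}'$ we must count, weighted orbifoldly and by $w'_r$, the ciliated integral graphs mapping to it. This is Norbury's Tutte-type argument. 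The orbifold weights are handled as usual: we work with graphs carrying labelled/ciliated data, for which the automorphisms are trivial, and the factor $1/|\Aut|$ on both sides matches by orbit--stabiliser.

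The fibre is counted by reversing the operation: we reinsert an edge into $\bm{G}'$, with a cilium on it. We treat the four types separately.

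\textbf{$\mc{R}$-type.} The removed edge joined faces $i\neq j$, and they merged into a face of length $p$. Inverting means choosing a starting point and an endpoint on the boundary of the merged face (integral positions) so that the two arcs plus the new edge of length $\ell_e$ give perimeters $L_i,L_j$. We then choose the ciliation on the new edge: $\ell_e$ segments, and the orientation and side of the root are fixed by which face is labelled $i$. Up to the factor-$2$ conventions already absorbed, summing over the position and the length gives $p\,[L_i+L_j-p]_+$. Here $p$ counts the rotation choices and $[L_i+L_j-p]_+$ counts the pairs (edge length, split point), since $2\ell_e=L_i+L_j-p$. Degenerate reinsertions where an endpoint lands on a vertex, or on an existing edge interior (creating a trivalent vertex), are exactly what trimming undoes. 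Counting lattice positions along the boundary treats both uniformly.

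\textbf{$\mc{E}$-type.} Both endpoints of the new edge lie on the same face $i$ of length $p$, and the edge is inserted so that the face count is unchanged. The weight is $b$. The count is $p$ (position of the first endpoint) times the number of admissible second endpoints and lengths. This yields $(L_i-1)[L_i-p]_+$: the extra factor $L_i-1$ arises because the second endpoint may sit anywhere on the twisted reattachment route, relative to the fixed first one.

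\textbf{$\mc{D}$-types.} Face $i$ splits into faces of lengths $p,q$. We reinsert an edge joining a point on each of them, with $2\ell_e=L_i-p-q$. This gives $pq\,[L_i-p-q]_+$, and the face order contributes the factor $2$ in the disconnected case. In the connected case $\mc{D}^{\rm c}$ there are two ways to glue, twisted and untwisted. By property \hyperref[prop:MON4]{MON4} their weights sum to $w_r+w_{\tilde r}=1+b$, which produces the coefficient $(1+b)$.

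The main obstacle will be the $\mc{E}$-term. There we must verify precisely that the inverse reinsertions of a cross-cap edge on a single face of length $p$, together with the ciliation choices, are counted by $p(L_i-1)[L_i-p]_+$. In particular, the boundary cases involving lollipops and tadpoles must contribute exactly the lattice points predicted, without double counting. I would first check this against the initial data: using \cref{eq:base:top:lattice} on $(g,n)=(\tfrac12,2)$, verify the recursion numerically for $(1,1)$ and $(\tfrac12,3)$ from \cref{tab:Ngnk}. The parity constraints are automatic because all counts vanish unless the perimeters sum to an even number.
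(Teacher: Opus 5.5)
Your frame matches the paper's: you reduce to the ciliated count, sum over the fibres of $\pi$ by orbit--stabiliser, and pair the two colourings of the removed edge so that $w_r+w_{\tilde r}=1+b$ in the $\mc{D}^{\rm c}$ case. But the content of the theorem is the exact value of $|\Att(\widehat G,\widehat\ell)|$ in each case, and there the proposal has a genuine gap.

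The most serious problem is the $\mc{E}$-term, which you flag but never derive. Your heuristic for the factor $L_i-1$ cannot work. For fixed $p$ the removed edge has forced length $\ell_{e_r}=(L_i-p)/2$, so there is no free length and no ``twisted reattachment route'' to range over. The missing idea is to split $\mc{E}$-type removals into two subcases:
\begin{itemize}
\item If the root is not on a twisted lollipop, a lift is given by an unordered pair of \emph{distinct} lattice points on the new face. There are $p(p-1)/2$ such pairs, and of the two colourings exactly one keeps the number of faces. Add a ciliation ($4\ell_{e_r}=2(L_i-p)$ choices), giving $p(p-1)(L_i-p)$.
\item If the root lies on a twisted lollipop (stick of length $\ge 0$, candy of length $\ge 1$), there are $p$ attachment points and $2(L_i-p)$ ciliations. \emph{In addition}, the total length $(L_i-p)/2$ can be distributed between stick and candy in $(L_i-p)/2$ ways, all with the same image under $\pi$. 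This gives $p(L_i-p)^2$.
\end{itemize}
The coinciding-endpoint configurations excluded from the first subcase are exactly the zero-stick lollipops of the second. Only the sum $p(L_i-p)\bigl((p-1)+(L_i-p)\bigr)=p(L_i-1)(L_i-p)$ produces the coefficient. A numerical check on $(\tfrac12,3)$ cannot replace this count, and $(1,1)$ is a base case outside the range $2g-2+n>1$.

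The other two counts also need repair.

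In the $\mc{R}$-case, an edge inserted between two points at distance $a\in[0,p]$ along the merged face yields $L_i-L_j=2a-p$. So only $p\ge|L_i-L_j|$ arises this way. The entire range $p<|L_i-L_j|$ comes from untwisted lollipops: candy of length $\min(L_i,L_j)$ enclosing the smaller face, stick of length $(|L_i-L_j|-p)/2$. You omit this subcase; it must be counted separately, and it happens to give the same $p\cdot 2(L_i+L_j-p)$. Moreover, $L_i+L_j-p$ does not count pairs (edge length, split point), since both are fixed by $p$ and the attachment point. It comes from the $4\ell_{e_r}=2(L_i+L_j-p)$ ciliations, shared between the two orderings of $\{i,j\}$. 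The root orientation is free, contrary to your claim.

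In $\mc{D}^{\rm d}$ the factor $2$ does not come from face order. Summing over ordered $(p,q)$ and ordered splittings double counts and contributes $\tfrac12$, which is compensated by $4\ell_{e_r}=2(L_i-p-q)$ exactly as in $\mc{D}^{\rm c}$. The $2$ is the total weight $1+1$ of the untwisted and twisted bridge, which are distinct attachment data; this is the precise analogue of $1+b$ in the connected case. As written your bookkeeping is inconsistent: the face-order factor would equally double the $\mc{D}^{\rm c}$ coefficient, and the colour choice you use there would double the $\mc{D}^{\rm d}$ one.
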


\begin{proof}
	The idea of the proof is to express $N'_{g,n}(L)$ as a weighted count of the fibres of the map $\pi$ defined in \cref{eq:pi:surj}. More precisely, denoting the target of $\pi$ by $\widehat{\mathcal N}_{g,n}(L)$, we rewrite $N'_{g,n}(L)$ as a sum over fibres over (unciliated) integral metric graphs $(\widehat{G},\widehat{\ell})$ obtained by removing the root from a ciliated integral metric graph $(G,\ell,c)$:
	\begin{equation} \label{eq:Ngn over fibers}
		N'_{g,n}(L)
		=
		\sum_{(\widehat{G},\widehat{\ell}) \in \widehat{\mathcal N}_{g,n}(L)} \;
		\sum_{\substack{(G,\ell,c) \in \Mod'_{g,n}(L) \\ \pi(G,\ell,c) = (\widehat{G},\widehat{\ell})}}
			\frac{\rho'_{G,r_c}(\ell)}{|\Aut(G,\ell,c)|}.
	\end{equation}
	For a fixed base point $(\widehat{G},\widehat{\ell})$, we call \emph{attachment data} the discrete choices needed to reconstruct a ciliated graph in $\mathcal N'_{g,n}(L)$ mapping to $(\widehat{G},\widehat{\ell})$: namely, a choice of how to attach the removed edge together with a choice of ciliation on the resulting rooted half-edge. We denote by $\Att(\widehat{G},\widehat{\ell})$ the set of all such attachment data. The automorphism group $\Aut(\widehat{G},\widehat{\ell})$ acts naturally on $\Att(\widehat{G},\widehat{\ell})$, and two attachment data that differ by such an automorphism produce isomorphic graphs in $\mathcal N'_{g,n}(L)$.

	If $a\in \Att(\widehat{G},\widehat{\ell})$ yields the ciliated metric graph $\bm{G}_a\in \mathcal N'_{g,n}(L)$, then $\Aut(\bm{G}_a)$ is naturally identified with the stabiliser of $a$ in $\Aut(\widehat{G},\widehat{\ell})$. Thus, by the orbit-stabiliser theorem,
	\begin{equation}\label{eq:orbit-stabilizer}
		\frac{1}{|\Aut(\bm{G}_a)|} = \frac{|O_a|}{|\Aut(\widehat{G},\widehat{\ell})|},
	\end{equation}
	where $O_a$ is the orbit of $a$ under the action of $\Aut(\widehat{G},\widehat{\ell})$. Thus, the inner sum in \cref{eq:Ngn over fibers} can be rewritten as
	\begin{multline}
		\sum_{\substack{(G,\ell,c) \in \Mod'_{g,n}(L) \\ \pi(G,\ell,c) = (\widehat{G},\widehat{\ell})}}
		\frac{\rho'_{G,r_c}(\ell)}{|\Aut(G,\ell,c)|}
		=
		\sum_{\substack{(G,\ell,c) \in \Mod'_{g,n}(L) \\ \pi(G,\ell,c) = (\widehat{G},\widehat{\ell})}}
		\frac{w'_{r_c}\,\rho_{\widehat{G}}(\widehat{\ell})}{|\Aut(G,\ell,c)|}
		=
		\sum_{[a] \in \Att(\widehat{G},\widehat{\ell})/\Aut(\widehat{G},\widehat{\ell})}
		\frac{w_{\widehat{G}}\,\rho_{\widehat{G}}(\widehat{\ell})}{|\Aut(\bm{G}_a)|} \\ 
		=
		w_{\widehat{G}}\,\rho_{\widehat{G}}(\widehat{\ell})
		\sum_{[a] \in \Att(\widehat{G},\widehat{\ell})/\Aut(\widehat{G},\widehat{\ell})}
		\frac{|O_a|}{|\Aut(\widehat{G},\widehat{\ell})|}
		=
		w_{\widehat{G}}\,
		\frac{\rho_{\widehat{G}}(\widehat{\ell})}{|\Aut(\widehat{G},\widehat{\ell})|}
		\,|\Att(\widehat{G},\widehat{\ell})|\, .
	\end{multline}
	Here the first equality is the definition of the MON for ciliated graphs. The second equality uses that isomorphism classes of $(G,\ell,c)$ mapping to $(\widehat{G},\widehat{\ell})$ are in bijection with $\Aut(\widehat{G},\widehat{\ell})$-orbits in $\Att(\widehat{G},\widehat{\ell})$. We denote the common value of $w'_{r_c}$, which is constant along the fibre of $\pi$, by $w_{\widehat{G}}$. The third equality uses \cref{eq:orbit-stabilizer}, and the last equality follows because $\Att(\widehat{G},\widehat{\ell})$ is partitioned into $\Aut(\widehat{G},\widehat{\ell})$-orbits.

	Thus, we can rewrite \cref{eq:Ngn over fibers} as
	\begin{equation}
		N'_{g,n}(L)
		=
		\sum_{(\widehat{G},\widehat{\ell}) \in \widehat{\mathcal N}_{g,n}(L)}
		w_{\widehat{G}}\,|\Att(\widehat{G},\widehat{\ell})|\,
		\frac{\rho_{\widehat{G}}(\widehat{\ell})}{|\Aut(\widehat{G},\widehat{\ell})|}.
	\end{equation}
	The rest of the proof is devoted to computing $|\Att(\widehat{G},\widehat{\ell})|$ explicitly in the four cases where $\widehat{G}$ is of type $\mc{R}$, $\mc{E}$, $\mc{D}^{\rm c}$, or $\mc{D}^{\rm d}$. This yields the corresponding contributions to $N'_{g,n}(L)$. Throughout the proof, we use $\widehat{G}$ to denote the trimmed graph $\tr(G-e_r)$.
	
	\smash{\fbox{Type $\mc{R}$}}
	In this situation, the root $r$ connects two different faces of $G$ which merge together in $\widehat{G}$. There are two subcases, depending on whether the rooted edge is part of an untwisted lollipop; see \cref{fig:typeR}.

	\begin{figure}[h]
		\begin{tikzpicture}[x=1pt,y=1pt,scale=.6]
		  \draw[postaction={
		      decorate,
		      decoration={markings, mark= at position 0.975 with {\arrow{stealth reversed}}}}](192, 696) arc[start angle=7.125, end angle=352.875, x radius=64.4981, y radius=-64.4981] -- (160, 712) arc[start angle=14.0362, end angle=345.9638, radius=32.9848] -- cycle;
		  \draw(128, 704) circle[radius=16];
		  \node at (128, 752) {$L_j$};
		  \node at (128, 704) {$L_i$};
		  \node at (176, 685) {$e_r$};
		  \draw[dotted, ->](112, 760) arc[start angle=105.9454, end angle=383.1986, radius=58.2409];
		  \node at (88, 672) {$p$};

		  \begin{scope}[xshift=5cm]
		    \draw(240, 672) -- (240, 752) .. controls (168, 768) and (160, 728) .. (166, 702) .. controls (172, 676) and (192, 664) .. (240, 672) -- cycle;
		    \draw[postaction={
		      decorate,
		      decoration={markings, mark= at position 0.125 with {\arrow{stealth}}}}](256, 672) -- (256, 752) .. controls (328, 776) and (344, 752) .. (348, 728) .. controls (352, 704) and (344, 680) .. (328, 668) .. controls (312, 656) and (288, 656) .. (256, 672) -- cycle;
		    \node at (208, 712) {$L_i$};
		    \node at (312, 712) {$L_j$};
		    \node at (270, 712) {$e_r$};
		    \draw[dotted, ->](312, 752) .. controls (256, 744) and (248, 744) .. (235.3333, 746) .. controls (222.6667, 748) and (205.3333, 752) .. (192.6667, 746.6667) .. controls (180, 741.3333) and (172, 726.6667) .. (172.6667, 712.6667) .. controls (173.3333, 698.6667) and (182.6667, 685.3333) .. (195.3333, 680) .. controls (208, 674.6667) and (224, 677.3333) .. (235.3333, 678.6667) .. controls (246.6667, 680) and (253.3333, 680) .. (267.6667, 675) .. controls (282, 670) and (304, 660) .. (324, 676);
		    \node at (292, 676) {$p$};
		  \end{scope}
		\end{tikzpicture}
		\caption{
			Edge removal of type $\mc{R}$: removing an untwisted lollipop (left); removing a rooted edge that connects two faces (right). The resulting new face of length $p$ is indicated by a dotted line.
		}
		\label{fig:typeR}
	\end{figure}

	We begin with the untwisted lollipop case. Assume without loss of generality that $L_i\le L_j$. Let $L_i$ be the perimeter of the face inside the candy and $L_j$ the perimeter of the exterior face (in $G$). Denote the stick by $e_s$ and the candy by $e_c$. After removal and trimming, the resulting graph $\widehat{G}$ has a new face of length $p \coloneqq L_j - L_i - 2\ell_{e_s}$; since $\ell_{e_s} \ge 0$, we have the constraint $p \le L_j-L_i$. There are $p$ ways to attach the lollipop to the new face of $\widehat{G}$, and the number of possible ciliations on the lollipop is $4\ell_{e_s}+4\ell_{e_c} = 2(L_i+L_j-p)$. Hence $|\Att(\widehat{G},\widehat{\ell})| = 2p(L_i+L_j-p)$, and this subcase contributes
	\begin{equation}
		\sum_{p=1}^{|L_i-L_j|} 2p(L_i+L_j-p) \, N_{g,n-1}\bigl(p,L_{[n]\setminus\{i,j\}}\bigr).
	\end{equation}

	We now turn to the case where the rooted edge $e_r$ connects two faces of lengths $L_i$ and $L_j$ but is not part of a lollipop. As before, the new face in $\widehat{G}$ has length $p \coloneqq (L_i-\ell_{e_r})+(L_j-\ell_{e_r})=L_i+L_j-2\ell_{e_r}$. Since $L_i-\ell_{e_r}>0$ and $L_j-\ell_{e_r}>0$, this forces $p>|L_i-L_j|$, and hence $\ell_{e_r}=(L_i+L_j-p)/2$. There are $p$ ways to attach the start of the rooted edge to the new face of $\widehat{G}$ so that the $i$-th face lies on its left; the other endpoint is then uniquely determined by the requirement that the two face lengths in $G$ are $L_i$ and $L_j$. The number of possible ciliations on the rooted edge is $4\ell_{e_r}=2(L_i+L_j-p)$. Thus again $|\Att(\widehat{G},\widehat{\ell})|=2p(L_i+L_j-p)$, and this subcase contributes
	\begin{equation}
		\sum_{p=|L_i-L_j|+1}^{L_i+L_j-1} 2p(L_i+L_j-p)\, N_{g,n-1}\bigl(p,L_{[n]\setminus\{i,j\}}\bigr).
	\end{equation}

	Putting the two subcases together, and noting that $w_{\widehat{G}} = 1$, we obtain the total $\mc{R}$-contribution
	\begin{equation}\label{eq:Rcontribution}
		\frac{1}{2}
		\sum_{\substack{i,j=1 \\ i\neq j}}^n
		\sum_{p=1}^{L_i+L_j-1}
			2p(L_i+L_j-p) \,
			N_{g,n-1}\bigl(p,L_{[n]\setminus\{i,j\}}\bigr),
	\end{equation}
	where the prefactor $\frac{1}{2}$ accounts for the fact that the outer sum runs over unordered pairs $(i,j)$, while the construction distinguishes the label $i$.

	\smash{\fbox{Type $\mc{E}$}}
	This is the non-orientable analogue of the previous case. There are two subcases, depending on whether the root $r$ is part of a twisted lollipop or not; see \cref{fig:typeE}.

	\begin{figure}[h]
		\begin{tikzpicture}[x=1pt,y=1pt,scale=.6]
		  \draw(96, 696) .. controls (94.1692, 704.0018) and (109.801, 703.9907) .. (114.1436, 712);
		  \draw[white, line width=2mm](96, 712) .. controls (94.1258, 703.976) and (109.881, 704.014) .. (114.144, 696);
		  \draw(114.1436, 696) arc[start angle=-150, end angle=150, radius=16];
		  \draw(96, 712) .. controls (94.1258, 703.9764) and (109.8814, 704.0143) .. (114.1436, 696);
		  \draw[postaction={
		      decorate,
		      decoration={markings, mark= at position 0.84 with {\arrow{stealth}}}}] (96, 712) arc[start angle=-165.9637, end angle=-14.0362, x radius=32.9848, y radius=-32.9848] -- (192, 712) arc[start angle=7.125, end angle=352.875, radius=64.4981] -- (160, 696) arc[start angle=14.0362, end angle=165.9638, x radius=32.9848, y radius=-32.9848];
		  \node at (128, 752) {$L_i$};
		  \node at (176, 685) {$e_r$};
		  \draw[dotted, ->](112, 760) arc[start angle=105.9454, end angle=383.1986, radius=58.2409];
		  \node at (88, 672) {$p$};

		  \begin{scope}[xshift=5cm]
		      \draw(240, 704) .. controls (240, 712) and (256, 712) .. (256, 720);
		      \draw[white, line width=2mm](256, 704) .. controls (256, 712) and (240, 712) .. (240, 720);
		      \draw(256, 704) .. controls (256, 712) and (240, 712) .. (240, 720);
		      \draw(240, 720) -- (240, 752) .. controls (168, 768) and (160, 728) .. (166, 702) .. controls (172, 676) and (192, 664) .. (240, 672) -- (240, 704);
		      \draw[postaction={
		      decorate,
		      decoration={markings, mark= at position 0.05 with {\arrow{stealth reversed}}}}](256, 704) -- (256, 672) .. controls (288, 656) and (312, 656) .. (328, 668) .. controls (344, 680) and (352, 704) .. (348, 728) .. controls (344, 752) and (328, 776) .. (256, 752) -- (256, 720);
		      \node at (208, 712) {$L_i$};
		      \node at (270, 712) {$e_r$};
		      \draw[dotted, ->](312, 752) .. controls (256, 744) and (248, 744) .. (235.3333, 746) .. controls (222.6667, 748) and (205.3333, 752) .. (192.6667, 746.6667) .. controls (180, 741.3333) and (172, 726.6667) .. (172.6667, 712.6667) .. controls (173.3333, 698.6667) and (182.6667, 685.3333) .. (195.3333, 680) .. controls (208, 674.6667) and (224, 677.3333) .. (235.3333, 678.6667) .. controls (246.6667, 680) and (253.3333, 680) .. (267.6667, 675) .. controls (282, 670) and (304, 660) .. (324, 676);
		      \node at (292, 676) {$p$};
		  \end{scope}
		\end{tikzpicture}
		\caption{
			Edge removal of type $\mc{E}$: removing an untwisted lollipop (left); removing a rooted edge that leaves the number of faces unchanged (right). The resulting new face of length $p$ is indicated by a dotted line.
		}
		\label{fig:typeE}
	\end{figure}

	Assume first that the root $r$ is part of a twisted lollipop attached inside a face of length $L_i$. Denote the stick by $e_s$ and the candy by $e_c$, and let $p$ be the length of the new face in $\widehat{G}$. There are $p$ ways to attach the lollipop to the new face. The number of possible ciliations is $4\ell_{e_s}+4\ell_{e_c} = 2(L_i-p)$. Moreover, unlike the untwisted lollipop case of type $\mc{R}$, the lengths of the candy and the stick may vary as long as their sum is fixed; this contributes a factor $\ell_{e_s}+\ell_{e_c}=(L_i-p)/2$. Thus
	\begin{equation}
		|\Att(\widehat{G},\widehat{\ell})|
		=
		p \, 2(L_i-p) \, \frac{L_i-p}{2}
		=
		p(L_i-p)^2.
	\end{equation}

	Next, assume that the root is not part of a twisted lollipop. Again let $p$ be the length of the new face in $\widehat{G}$. The rooted edge has two endpoints on this new face, and they can be chosen freely provided they do not coincide. Hence there are $\frac{p(p-1)}{2}$ ways to attach the rooted edge. (The excluded case of coinciding endpoints corresponds exactly to a twisted lollipop with vanishing stick, already accounted for above.) The number of possible ciliations is $4\ell_{e_r}=2(L_i-p)$. Therefore
	\begin{equation}
		|\Att(\widehat{G},\widehat{\ell})|
		=
		\frac{p(p-1)}{2} \, 2(L_i-p)
		=
		p(p-1)(L_i-p).
	\end{equation}

	Since $w_{\widehat{G}}=b$ throughout the $\mc{E}$ case, we obtain the contribution
	\begin{equation} \label{eq:Econtribution}
		b \sum_{i=1}^n \sum_{p=1}^{L_i-2}
		\underbrace{\bigl(p(L_i-p)^2+p(p-1)(L_i-p)\bigr)}_{=\,p(L_i-1)(L_i-p)}
		N_{g-\frac{1}{2},n}\bigl(p,L_{[n]\setminus\{i\}}\bigr).
	\end{equation}

	\smash{\fbox{Type $\mc{D}^{\rm c}$}}
	Consider now the connected $\mc{D}$ case. Removing the root $r$ from the ciliated graph $G$ yields a connected graph $\widehat{G}$, splitting a face of length $L_i$ into two new faces of lengths $p$ and $q$; see left panel in \cref{fig:typeD}.

	\begin{figure}[h]
		\begin{tikzpicture}[x=1pt,y=1pt,scale=.6]
		  \draw[dotted, ->](111.7823, 741.9087) .. controls (89.0433, 731.5821) and (84.5216, 707.791) .. (97.2608, 689.8955) .. controls (110, 672) and (140, 660) .. (186.4146, 691.8203);
		  \draw[dotted, ->](320, 720) .. controls (296, 752) and (270, 736) .. (266, 714) .. controls (262, 692) and (280, 664) .. (308, 688);
		  \draw[postaction={
		      decorate,
		      decoration={markings, mark= at position 0.0675 with {\arrow{stealth reversed}}}}] (264.7068, 731.5771) .. controls (237.5689, 755.859) and (209.2497, 755.1983) .. (179.7491, 729.5949) .. controls (175.74, 731.7579) and (171.7119, 733.8789) .. (168, 736) .. controls (149.3333, 746.6667) and (138.6667, 757.3333) .. (122.6667, 754.6667) .. controls (106.6667, 752) and (85.3333, 736) .. (82.6667, 714.6667) .. controls (80, 693.3333) and (96, 666.6667) .. (125.3333, 664) .. controls (154.6667, 661.3333) and (197.3333, 682.6667) .. (205.3333, 698.6667) .. controls (210.1008, 708.2016) and (202.5569, 715.8424) .. (191.7315, 722.7179) .. controls (213.2438, 742.2393) and (235.503, 741.3474) .. (258.5089, 720.0422) .. controls (256.8951, 715.6462) and (256, 711.0818) .. (256, 706.6667) .. controls (256, 688) and (272, 672) .. (285.3333, 669.3333) .. controls (298.6667, 666.6667) and (309.3333, 677.3333) .. (320, 685.3333) .. controls (330.6667, 693.3333) and (341.3333, 698.6667) .. (336, 712) .. controls (330.6667, 725.3333) and (309.3333, 746.6667) .. (290.6667, 746.6667) .. controls (280.5068, 746.6667) and (271.1369, 740.3469) .. (264.7068, 731.5771) -- cycle;
		  \node at (276, 700) {$q$};
		  \node at (112, 688) {$p$};
		  \node at (224, 760) {$e_r$};
		  \node at (80, 752) {$L_i$};

		  \begin{scope}[xshift=12cm]
		    \draw(216.007, 736.021) .. controls (223.1026, 737.4232) and (224.0387, 750.12) .. (231.9467, 748.6841);
		    \draw[white, line width=2mm](231.999, 735.639) .. controls (224.03, 737.498) and (223.257, 749.827) .. (216.046, 748.614);
		    \draw[dotted, ->](111.782, 741.909) .. controls (89.0433, 731.582) and (84.5216, 707.791) .. (97.2608, 689.8955) .. controls (110, 672) and (140, 660) .. (186.415, 691.82);
		    \draw[dotted, ->](320, 720) .. controls (296, 752) and (270, 736) .. (266, 714) .. controls (262, 692) and (280, 664) .. (308, 688);
		    \node at (276, 700) {$q$};
		    \node at (112, 688) {$p$};
		    \node at (224, 760) {$\tilde{e}_r$};
		    \node at (80, 752) {$L_i$};
		    \draw(231.999, 735.6387) .. controls (224.0298, 737.498) and (223.2573, 749.8273) .. (216.046, 748.6143);
		    \draw[postaction={
		      decorate,
		      decoration={markings, mark= at position 0.95 with {\arrow{stealth}}}}] (216.0068, 736.0208) .. controls (207.8122, 734.4016) and (199.7206, 729.9673) .. (191.732, 722.718) .. controls (202.557, 715.842) and (210.101, 708.202) .. (205.333, 698.667) .. controls (197.333, 682.667) and (154.667, 661.333) .. (125.333, 664) .. controls (96, 666.667) and (80, 693.333) .. (82.6667, 714.667) .. controls (85.3333, 736) and (106.667, 752) .. (122.667, 754.667) .. controls (138.667, 757.333) and (149.333, 746.667) .. (168, 736) .. controls (171.712, 733.879) and (175.74, 731.758) .. (179.749, 729.595) .. controls (192.0535, 740.274) and (204.1525, 746.6138) .. (216.046, 748.6143);
		    \draw(231.903, 735.6618) .. controls (240.6578, 733.5671) and (249.5265, 728.3605) .. (258.509, 720.042) .. controls (256.895, 715.646) and (256, 711.082) .. (256, 706.667) .. controls (256, 688) and (272, 672) .. (285.333, 669.333) .. controls (298.667, 666.667) and (309.333, 677.333) .. (320, 685.333) .. controls (330.667, 693.333) and (341.333, 698.667) .. (336, 712) .. controls (330.667, 725.333) and (309.333, 746.667) .. (290.667, 746.667) .. controls (280.507, 746.667) and (271.137, 740.347) .. (264.707, 731.577) -- (264.707, 731.577) .. controls (253.9717, 741.1825) and (243.0516, 746.8848) .. (231.9467, 748.6841);
		  \end{scope}
		\end{tikzpicture}
		\caption{
			Left: edge removal of type $\mc{D}^{\rm c}$ or $\mc{D}^{\rm d}$: removing the rooted edge $e_r$ creates two new faces of lengths $p$ and $q$, indicated by a dotted line. The graph may or may not be disconnected in the process. Right: construction of the partner ciliated graph $(\tilde{G},\tilde{c})$ from $(G,c)$.
		}
		\label{fig:typeD}
	\end{figure}

	In this situation, the set $\Att(\widehat{G},\widehat{\ell})$ can be partitioned into pairs: in each pair, one element has an untwisted rooted edge $e_r$, and the other is identical except that $e_r$ is twisted. More precisely, given a ciliated graph $(G,c)$ with root $r$, we define its partner $(\tilde{G},\tilde{c})$ with root $\tilde{r}$ as follows. Let $\tilde{G}$ be obtained from $G$ by reversing the $\Z_2$-colouring of the rooted edge $e_r$; denote the corresponding edge in $\tilde{G}$ by $\tilde{e}_r$. Let $h'$ be the half-edge of $G$ that immediately precedes the half-edge containing the root $r$ (with respect to the ordering from \cref{ssub:rooting}), and let $f'$ be the face of $\widehat{G}$ containing $h'$. We define $\tilde{r}$ to be the unique rooting on $\tilde{e}_r$ such that, for the ordering induced by $\tilde{r}$, (i) $h'$ is the half-edge of $\tilde{G}$ immediately preceding the half-edge containing $\tilde{r}$, and (ii) the face $f'$ has the same orientation as in $\tr(\tilde{G}-\tilde{e}_r)$. Finally, $\tilde{c}$ is defined by placing the cilium on the same unit segment (counted from $\tilde{r}$) as in $c$. See right panel in \cref{fig:typeD}. By construction, $(G,c)$ and $(\tilde{G},\tilde{c})$ are distinct as ciliated graphs. Indeed, after removing $r$ and $\tilde{r}$, the induced orientations of the face preceding the root agree, while the induced orientations of the face succeeding the root disagree, since the rooted edges in $G$ and $\tilde{G}$ have opposite colour. In particular, the corresponding MON weights satisfy $w_r + w_{\tilde{r}}=1+b$ (see the $\mc{D}^{\rm c}$ rule in \cref{ssub:mon:ciliated}).

	We now compute $|\Att(\widehat{G},\widehat{\ell})|$ in the $\mc{D}^{\rm c}$ case. The number of ways to attach the edge $e_r$ to $\widehat{G}$ is $pq$. The number of ways to ciliate this edge is $4\ell_{e_r}=2(L_i-p-q)$. Grouping attachments into partner pairs and using $w_r+w_{\tilde{r}}=1+b$, we obtain the total contribution
	\begin{equation}\label{eq:Dccontribution}
		\frac{1+b}{2} \sum_{i=1}^n \sum_{\substack{p,q>0 \\ p+q<L_i}}
			2pq(L_i-p-q)\,
			N_{g-1,n+1}\bigl(p,q,L_{[n]\setminus \{i\}}\bigr),
	\end{equation}
	where the prefactor $\frac{1}{2}$ accounts for the symmetry under exchanging $p$ and $q$.

	\smash{\fbox{Type $\mc{D}^{\rm d}$}}
	Finally, consider the disconnected $\mc{D}$ case. Removing the root $r$ from the ciliated graph $G$ yields a disconnected graph $\widehat{G}$, splitting a face of length $L_i$ into two new faces of lengths $p$ and $q$; see \cref{fig:typeD} again.

	Like in the connected $\mc{D}^{\rm c}$ case, the set $\Att(\widehat{G},\widehat{\ell})$ splits into partner pairs: for any attachment, one may flip the colour of the new edge, producing a distinct lift to a ciliated M\"obius graph while leaving the graph $\widehat{G}$ unchanged. In the disconnected setting both lifts have weight~$1$, so each pair contributes a factor $2=1+1$. As for $|\Att(\widehat{G},\widehat{\ell})|$: there are $pq$ choices for attaching the edge $e_r$ to $\widehat{G}$, and $4\ell_{e_r}=2(L_i-p-q)$ choices for ciliation. Grouping attachments into partner pairs therefore yields the total contribution.
	\begin{equation}\label{eq:Ddcontribution}
		\frac{1+1}{2}
		\sum_{i=1}^n \sum_{\substack{p,q>0\\ p+q <L_i}} 2pq(L_i-p-q)
		\sum_{\substack{g_1+g_2 = g \\ I_1 \sqcup I_2 = [n]\setminus\{i\}}}
			N_{g_1,1+|I_1|}(p,L_{I_1})\,
			N_{g_2,1+|I_2|}(q,L_{I_2}),
	\end{equation}
	where again the prefactor $\frac{1}{2}$ accounts for the symmetry under exchanging $p$ and $q$.

	\noindent\textit{Conclusion of the proof.}
	Putting together the four contributions from \cref{eq:Rcontribution,eq:Econtribution,eq:Dccontribution,eq:Ddcontribution}, and using the relation $N'_{g,n}(L) = 2( \sum_i L_i ) N_{g,n}(L)$ from \cref{lem:cil:to:uncil} yields the statement of the theorem.
\end{proof}

We note that together with the initial conditions $N_{0,3}$, $N_{\frac{1}{2},2}$ and $N_{1,1}$ computed in \cref{app:base:top}, the above theorem gives an explicit recursive computation of the $N_{g,n}$ for any $(g,n)$ such that $2g-2+n > 1$.

\subsubsection{The asymmetric recursion}\label{sec:the asymmetric recursion}
Finally, we are ready to prove the following recursion for $N_{g,n}$, stated in \cref{thm:lattice:rec}:
\begin{equation}\label{eq:rec:toprove}
\begin{split}
	N_{g,n}(L_{[n]})
	&=
	\sum_{m=2}^n \sum_{p > 0}
		p \, \mc{R}(L_1,L_m,p)
		N_{g,n-1}(p,L_2,\ldots,\widehat{L_m},\ldots,L_n) \\
	&\qquad
	+
	b \sum_{p > 0}
		p(L_1 - 1) \, \mc{E}(L_1,p)
		N_{g-\frac{1}{2},n}(p,L_2,\ldots,L_n) \\
	&\qquad\qquad
	+
	\sum_{p,q > 0}
		p q \, \mc{D}(L_1,p,q)
		\Bigg(
			\frac{1+b}{2} N_{g-1,n+1}(p,q,L_2,\ldots,L_n) \\
	&\qquad\qquad\qquad 
			+
			\sum_{\substack{g_1+g_2 = g \\ I_1 \sqcup I_2 = \{2,\ldots,n\}}}
				N_{g_1,1+|I_1|}(p,L_{I_1})
				N_{g_2,1+|I_2|}(q,L_{I_2})
		\Bigg) .
\end{split}
\end{equation}
Here $\mc{R}$, $\mc{E}$, and $\mc{D}$ are given explicitly by
\begin{equation}
\begin{aligned}
	\mc{R}(L_1,L_m,p) &= \frac{1}{2L_1} \Bigl(
		[L_1 + L_m - p]_+
		-
		[-L_1 + L_m - p]_+
		+
		[L_1 - L_m - p]_+ 
	\Bigr), \\
	\mc{E}(L_1,p) &= \frac{1}{2L_1} [L_1 - p]_+ , \\
	\mc{D}(L_1,p,q) &= \frac{1}{L_1} [L_1 - p - q]_+ .
\end{aligned}
\end{equation}
Note that \labelcref{eq:rec:toprove} singles out the first boundary component, and is therefore asymmetric in the $L_i$, in contrast with the symmetric recursion of \cref{thm:lattice:rec:sym}. This asymmetry mirrors the distinguished role played by the variable $z_1$ in the refined topological recursion formula discussed in \cref{sec:RTR}.

\begin{proof}[Proof of \cref{thm:lattice:rec}]
	To prove \cref{thm:lattice:rec}, we first note that both the symmetric recursion \labelcref{eq:lattice:rec:sym} and the asymmetric one \labelcref{eq:rec:toprove} uniquely determine the refined lattice point count $N_{g,n}$ for $2g-2+n>0$. Since the two recursions share the same initial conditions, it suffices to show that \labelcref{eq:rec:toprove} implies \labelcref{eq:lattice:rec:sym}.

	Let $\ms{N}_{g,n}$ denote the count produced by the asymmetric recursion. A priori, it is not clear that $\ms{N}_{g,n}$ is symmetric in the boundary lengths $L_1,\ldots,L_n$. However, we prove in \cref{ssec:Weber} that \labelcref{eq:rec:toprove} is the discrete Laplace-transformed form of refined topological recursion on the Weber curve. Since refined topological recursion produces symmetric differentials, and since the $\ms{N}_{g,n}$ are the expansion coefficients of these differentials at $z_i=0$, it follows that $\ms{N}_{g,n}$ are symmetric in $L_1,\ldots,L_n$. By relabelling the boundary components, we may therefore write a version of \cref{eq:rec:toprove} in which any $L_i$ (for $i=1,\ldots,n$) plays the distinguished role instead of $L_1$. Multiplying the corresponding equation by $L_i$ and summing over $i=1,\ldots,n$, we obtain
	\begin{equation} \label{eq:asymmetric_sum}
	\begin{split}
		\biggl(\sum_{i=1}^n L_i\biggr)
		\ms{N}_{g,n}\big(L_{[n]}\big)
		&=
		\sum_{\substack{i,j = 1 \\ i \neq j}}^n \sum_{p >0}
			p \frac{[L_i+L_j-p]_+}{2}\,
			\ms{N}_{g,n-1}\big(p,L_{[n]\setminus \{i,j\}} \big) \\
		&\qquad
		+
		b \sum_{i=1}^n \sum_{p >0}
			p (L_i-1) \frac{[L_i-p]_+}{2}\,
			\ms{N}_{g-\frac{1}{2},n}\big(p,L_{[n]\setminus\{i\}}\big) \\
		&\qquad\qquad
		+
		\sum_{i=1}^n \sum_{ p,\,q >0 }
			p q [L_i-p-q]_+\,
			\Bigg(
				\frac{1+b}{2}
				\ms{N}_{g-1,n+1}\big(p,q,L_{[n]\setminus \{i\}}\big) \\
		&\qquad\qquad\qquad\qquad 
				+
				\sum_{\substack{g_1+g_2 = g \\ I_1 \sqcup I_2 = [n]\setminus\{i\}}}
					\ms{N}_{g_1,1+|I_1|}(p,L_{I_1})\,
					\ms{N}_{g_2,1+|I_2|}(q,L_{I_2})
			\Bigg)
			+ \Delta_{g,n}(L_{[n]}).
	\end{split}
	\end{equation}
	Here
	\begin{equation}
		\Delta_{g,n}(L_{[n]})
		=
		\frac{1}{2} \sum_{\substack{i,j = 1 \\ i \neq j}}^n \sum_{p>0}
		\bigl( [L_i - L_j - p]_+ - [L_j - L_i - p]_+ \bigr)\,
		\ms{N}_{g,n-1}\big(p,L_{[n]\setminus \{i,j\}}\big).
	\end{equation}
	Apart from $\Delta_{g,n}$, \cref{eq:asymmetric_sum} matches the symmetric recursion \labelcref{eq:lattice:rec:sym}. Moreover, $\Delta_{g,n} = 0$, since the summand is odd under the exchange of $L_i$ and $L_j$. It follows that $\ms{N}_{g,n}$ satisfies the symmetric recursion. By uniqueness of solutions to the recursion with the given initial conditions, we conclude that $\ms{N}_{g,n}=N_{g,n}$, which completes the proof.
\end{proof}

\section{The volume recursion}
\label{sec:volume}

The moduli space of metric M\"obius graphs not only possesses an integral structure but is also equipped with a natural measure, which we refer to as the Euclidean measure. These two structures are closely related: by rescaling the lattice to a finer and finer mesh, the integral structure tends to the Euclidean measure in the limit. This leads to a continuous analogue of the refined lattice point count, the \emph{refined volumes}. They satisfy an integral (rather than discrete) recursion, stated in \cref{thm:volume:rec}.

\subsection{Euclidean measure and refined volumes}
Let $G$ be a M\"obius graph. On the cell $P_G(L)$ we define $d\mu_G(\ell)$ to be the unique translation-invariant measure characterised by
\begin{equation}
	d\mu_G(\ell) \prod_{i=1}^n dL_i
	=
	\prod_{e \in E(G)} d\ell_e.
\end{equation}
Equivalently, $d\mu_G$ is the fibre measure for which the linear change of variables $\ell \mapsto (L,\textup{fibre coordinates})$ has unit Jacobian. This in turn defines a measure $d\mu$ on $\Mod_{g,n}(L)$ by setting\footnote{
	 $\Mod_{g,n}(L)$ need not be orientable, but this is not an issue. We work with a measure (a positive density), not a volume form. The orbicell decomposition by polytopes $P_G(L)/\Aut(G)$ provides a canonical translation-invariant measure on each top-dimensional cell, and lower-dimensional cells have $d\mu$-measure zero. The factor $|\Aut(G)|$ implements orbifold weighting.
}
\begin{equation}\label{eq:Eucl:measure}
	\int_{\Mod_{g,n}(L)} f(\bm{G}) \, d\mu(\bm{G})
	\coloneqq
	2^{2g-2+n}
	\sum_{\substack{G \in \MG_{g,n} \\ \textup{trivalent}}} \frac{1}{|\Aut(G)|} \int_{P_{G}(L)} f(\ell) \, d\mu_G(\ell).
\end{equation}
for any (say) bounded continuous function $f$ on $\Mod_{g,n}(L)$. The sum runs only over trivalent graphs, since graphs with higher-valent vertices contribute measure zero. On the right-hand side, $f$ denotes, by abuse of notation, the unique $\Aut(G)$-equivariant lift of $f$ to $P_G(L)$. The factor $2^{2g-2+n}$ is included by convention to match the unrefined setting.

The integral structure introduced in \cref{sec:defs} is closely related to the measure $d\mu$. More precisely, for a bounded continuous function $f$ on $\Mod_{g,n}(L)$, we have
\begin{equation}\label{eq:lattice:to:Eucl}
	\sum_{\bm{G} \in \ZMod_{g,n}(\lambda^{-1}L)}
			\frac{f(\bm{G})}{|\Aut(\bm{G})|}
	\sim
	\frac{1}{\lambda^{6g-6+2n}}
	\frac{2}{2^{2g-2+n}}
	\int_{\Mod_{g,n}(L)} f(\bm{G}) \, d\mu(\bm{G})
\end{equation}
as $\lambda \to 0^+$, with $\lambda^{-1} \sum_i L_i \in 2\Z_{>0}$. In other words, the weighted count of integer points in the rescaled lattice is asymptotic to the corresponding integral against $d\mu$. The factor $2^{-(2g-2+n)}$ comes from the normalisation in \cref{eq:Eucl:measure}, while the additional factor of $2$ reflects the parity constraint: lattice points occur only when $\sum_i L_i$ is even. More concretely, for each fixed trivalent $G$,
\begin{equation}
	\sum_{\ell \in P_G^{\Z}(\lambda^{-1}L)} f(\ell)
	\sim
	\frac{1}{\lambda^{\dim P_G(L)}}
	\frac{1}{\mathrm{covol}(\Lambda_G)}
	\int_{P_G(L)} f(\ell) \, d\mu_G(\ell),
\end{equation}
where $\Lambda_G = \set{ \ell\in\Z^{E(G)} | A_G\ell=0 }$ is the kernel lattice and $\mathrm{covol}(\Lambda_G)$ denotes its covolume with respect to $d\mu_G$. In our situation, the image lattice $A_G(\Z^{E(G)})\subset \Z^n$ has index $2$: it consists precisely of those boundary length vectors whose coordinate sum is even. With the normalisation of $d\mu_G$ above, this implies $\mathrm{covol}(\Lambda_G)=\tfrac12$.

Motivated by \cref{eq:lattice:to:Eucl}, we define the refined volumes as the integrals of the MON with respect to $d\mu$. Notice that the MON is continuous and bounded thanks to \cref{prop:MON}.

\begin{definition}
	For $L=(L_1,\ldots,L_n)\in \R_{>0}^n$, define the \emph{refined volumes} by
	\begin{equation}
		V_{g,n}(L;b)
		\coloneqq
		\int_{\Mod_{g,n}(L)} \rho(\bm{G};b) \, d\mu(\bm{G}) .
	\end{equation} The refined volume $V_{g,n}(L;b)$ is a polynomial of degree at most $2g$ in $b$. For notational simplicity, we omit the dependence on $b$ henceforth.
\end{definition}

From \cref{eq:lattice:to:Eucl} it follows that, when $\sum_i L_i$ is even,
\begin{equation}
	N_{g,n}(\lambda^{-1}L)
	\sim
	\frac{1}{\lambda^{6g-6+2n}}
	\frac{2}{2^{2g-2+n}} \,
	V_{g,n}(L)
	\qquad
	\text{as $\lambda \to 0^+$.}
\end{equation}
This relation, together with the recursion for the refined lattice point count, implies a recursion for the refined volumes, analogous to the one satisfied by the Witten--Kontsevich volumes \cite{ABCGLW26} (see also \cite{BCEG25} for a derivation \`{a}~la Tutte in the context of $r$-spin intersection numbers, which corresponds to the Witten--Kontsevich result when $r=2$). It is also worth noting that, in the orientable setting (corresponding to $b=0$), the same Euclidean measure and the associated volumes can be used to access finer geometric and dynamical features of moduli spaces, including counts of multicurves \cite{ABCGLW26,BCDGW22}, statistics of the length spectrum \cite{JL23,BGL25}, and applications to hyperbolic and flat geometry \cite{AC25,Tal25,ABCDGLW23,DGZZ21}.

\subsection{The volume recursion}
In this section, we prove \cref{thm:volume:rec} by induction on $2g-2+n$.

The base cases follow from a direct calculation, either by extracting the leading term from \cref{eq:base:top:lattice} or by a direct computation as in \cref{app:base:top}. For the recursion step, the key observation is that the sums over $p$ and $q$ in the discrete recursion \labelcref{eq:lattice:rec} can be viewed as Riemann sums which, as $\lambda\to 0^+$, converge to the corresponding Riemann integrals. More precisely, rescale the boundary lengths by $L\mapsto \lambda^{-1}L$ and multiply both sides by $\lambda^{6g-6+2n}\,2^{2g-3+n}$. The left-hand side tends to $V_{g,n}(L)$. On the right-hand side, after the change of summation variables $p\mapsto \lambda^{-1}p$ and $q\mapsto \lambda^{-1}q$, and a straightforward bookkeeping of the powers of $\lambda$ and $2$, we obtain, setting $I=\set{2,\ldots,n}$,
\begin{multline}
	(2\lambda)
	\sum_{m=2}^n \sum_{p\in \lambda \Z_{>0}}
		p \, \mc{R}(L_1,L_m,p)\,
		N_{g,n-1}^{\lambda}(p,L_{I\setminus\{m\}})
	+
	b(2\lambda) \sum_{p \in \lambda \Z_{>0}}
		p (L_1-\lambda)\, \mc{E}(L_1,p)\,
		N_{g-\frac{1}{2},n}^{\lambda}(p,L_I)\\
	+
	\frac{1+b}{2}\,
	(2\lambda^{2})
	\sum_{p,q \in \lambda \Z_{>0}}
		pq\, \mc{D}(L_1,p,q)\,
		N_{g-1,n+1}^{\lambda}(p,q,L_I) 
		\\
	+
	(4\lambda^{2})
	\sum_{p,q \in \lambda \Z_{>0}}
	\sum_{\substack{g_1+g_2 = g \\ I_1 \sqcup I_2 = I}}
		pq\, \mc{D}(L_1,p,q)\,
		N_{g_1,1+|I_1|}^{\lambda}(p,L_{I_1})\,
		N_{g_2,1+|I_2|}^{\lambda}(q,L_{I_2}).
\end{multline}
Here we have set $N^{\lambda}_{g_0,n_0}(L) \coloneqq \lambda^{6g_0-6+2n_0}\,2^{2g_0-3+n_0}\, N_{g_0,n_0}(\lambda^{-1}L)$ for the rescaled lattice point count. Since the kernels $\mc{R}$, $\mc{E}$, and $\mc{D}$ are homogeneous of degree zero, they are unchanged by the rescaling. Note also that the sums over $p$ and $q$ now run over the rescaled lattice $\lambda\Z_{>0}$, and that the four terms carry different prefactors in $\lambda$ and~$2$.

Before proceeding, we record a parity constraint that is crucial for the Riemann-sum-to-Riemann-integral limit. The sums over $p$ and $q$ are not taken over all of $\lambda\Z_{>0}$, but only over those values compatible with the fact that the lattice point count vanishes unless the sum of the corresponding boundary lengths is even. More precisely, assuming throughout that $\lambda^{-1}\sum_i L_i \in 2\Z_{>0}$, we obtain:
\begin{itemize}
	\item
	In the $\mc{R}$-sum, $\lambda^{-1}(p + L_2 + \cdots + \widehat{L_m} + \cdots + L_n)$ must be even. Equivalently, $2\lambda \mid p - L_1 - L_m$.

	\item
	In the $\mc{E}$-sum, $\lambda^{-1}(p + L_2 + \cdots + L_n)$ must be even. Equivalently, $2\lambda \mid p - L_1$.

	\item
	In the connected $\mc{D}$-sum, $\lambda^{-1}(p+q + L_2 + \cdots + L_n)$ must be even. Equivalently, $2\lambda \mid p+q - L_1$.

	\item
	In the disconnected $\mc{D}$-sum, for any splitting $I_1 \sqcup I_2 = \{2,\ldots,n\}$, both $\lambda^{-1}(p + \sum_{i \in I_1} L_i)$ and $\lambda^{-1}(q + \sum_{j \in I_2} L_j)$ must be even. We write these conditions as $2\lambda \mid p + L_{I_1}$ and $2\lambda \mid q + L_{I_2}$.
\end{itemize}
These restrictions will be imposed on the sums below. Passing to the limit $\lambda\to 0^+$, and using the inductive asymptotic relation $N^{\lambda}_{g_0,n_0}\sim V_{g_0,n_0}$, one finds:

\fbox{Type $\mc{R}$, $\mc{E}$ and $\mc{D}^{\rm c}$}
For the $\mc{R}$-, $\mc{E}$-, and connected $\mc{D}$-terms there is a single divisibility condition by $2\lambda$, so the sums run over half of the rescaled lattice. Consequently, the corresponding Riemann sums converge to the full integrals:
\begin{equation}
\begin{aligned}
	&(2\lambda)
	\sum_{\substack{p \in \lambda\Z_{>0} \\ 2\lambda \mid p - L_1 - L_m}}
			p \, \mc{R}(L_1,L_m,p)\,
			N_{g,n-1}^{\lambda}(p,L_{I \setminus \{m\}})
	\sim
	\int_{0}^{+\infty}
			p \, \mc{R}(L_1,L_m,p)\,
			V_{g,n-1}(p,L_{I \setminus \{m\}})\, dp,
	\\
	&(2\lambda)
	\sum_{\substack{p \in \lambda\Z_{>0} \\ 2\lambda \mid p - L_1}}
		p (L_1 - \lambda)\, \mc{E}(L_1,p)\,
		N_{g-\frac{1}{2},n}^{\lambda}(p,L_I)
	\sim
	\int_{0}^{+\infty}
		p L_1\, \mc{E}(L_1,p)\,
		V_{g-\frac{1}{2},n}(p,L_I)\, dp,
	\\
	&(2\lambda^2)
	\sum_{\substack{p ,q \in \lambda\Z_{>0} \\ 2\lambda \mid p + q - L_1}}
		pq\, \mc{D}(L_1,p,q)\,
		N_{g-1,n+1}^{\lambda}(p,q,L_I)
	\sim
	\int_{0}^{+\infty}\!\!\int_{0}^{+\infty}
		pq\, \mc{D}(L_1,p,q)\,
		V_{g-1,n+1}(p,q,L_I)\, dp dq.
\end{aligned}
\end{equation}

\fbox{Type $\mc{D}^{\rm d}$}
For the disconnected $\mc{D}$-term there are two independent divisibility conditions by $2\lambda$, one for $p$ and one for $q$, so each sum runs over half of the rescaled lattice (hence an overall factor of $1/4$ appears). Accordingly,
\begin{multline}
	(4\lambda^2)
	\sum_{\substack{p,q \in \lambda\Z_{>0} \\ 2\lambda \mid p + L_{I_1},\;\; 2\lambda \mid q + L_{I_2}}}
			pq\, \mc{D}(L_1,p,q)\,
			N_{g_{1},1+|I_1|}^{\lambda}(p,L_{I_1})\,
			N_{g_{2},1+|I_2|}^{\lambda}(q,L_{I_2}) \\
	\sim
	\int_{0}^{+\infty} \int_{0}^{+\infty}
			pq\, \mc{D}(L_1,p,q)\,
			V_{g_{1},1+|I_1|}(p,L_{I_1})\,
			V_{g_{2},1+|I_2|}(q,L_{I_2})\, dp dq.
\end{multline}
Putting these limits together yields the recursion formula stated in \cref{thm:volume:rec}.

\section{Laplace transform and refined topological recursion} 
\label{sec:RTR}

The goal of this section is to relate refined topological recursion on the Weber and Airy curves to the refined lattice point count and refined volumes, respectively. After recalling the definition of refined topological recursion, we prove the following.

\begin{proposition}[Refined topological recursion counts M\"obius graphs]
	\label{prop:RTR}
	\leavevmode
	\begin{enumerate}
		\item\label{Weber}
		Consider the refined Weber curve with $\mu=-1$. Under the identification of refinement parameters \smash{$\mf{b} = - \frac{b}{\sqrt{1+b}}$}, the associated correlation differentials encode the refined lattice point count via a discrete Laplace transform:
		\begin{equation}\label{eq:Web:lattice}
			\omega_{g,n}^{\textup{Web}}(z_1,\ldots,z_n)
			=
			(-1)^{n} \frac{2}{(1+b)^g}
			\sum_{L_1,\ldots,L_n > 0}
			N_{g,n}(L_1,\ldots,L_n)
			\prod_{i=1}^n L_i \, z_i^{L_i - 1} \, dz_i.
		\end{equation}

		\item\label{Airy}
		Consider the refined Airy curve. Under the same identification \smash{$\mf{b} = - \frac{b}{\sqrt{1+b}}$}, the associated correlation differentials encode the refined volumes via a (continuous) Laplace transform:
		\begin{equation}
			\omega_{g,n}^{\textup{Airy}}(z_1,\ldots,z_n)
			=
			\frac{2}{(1+b)^g}
			\int_{0}^{+\infty} \cdots \int_{0}^{+\infty}
			V_{g,n}(L_1,\ldots,L_n)
			\prod_{i=1}^n L_i \, e^{-z_i L_i} \, dL_i \, dz_i.
		\end{equation}
	\end{enumerate}
\end{proposition}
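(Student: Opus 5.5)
The plan is to compare recursions. The symmetric recursion (\cref{thm:lattice:rec:sym}) cannot be used for part \labelcref{Weber}: the proof of \cref{thm:lattice:rec} relies on \cref{Weber} to know that the asymmetric count $\ms{N}_{g,n}$ is symmetric, so that would be circular. Instead I will work directly with the asymmetric recursion \labelcref{eq:rec:toprove}, viewed as a formal recursion with its own solution $\ms{N}_{g,n}$. Concretely, I define generating differentials
\[
W_{g,n}(z_1,\ldots,z_n) \coloneqq (-1)^n \frac{2}{(1+b)^g}\sum_{L_i>0}\ms{N}_{g,n}(L)\prod_i L_i z_i^{L_i-1}dz_i,
\]
and show that they satisfy the refined topological recursion on the Weber curve with $\mu=-1$. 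I also check that the initial data agree: $\omega_{0,3}$, $\omega_{\frac12,2}$ and $\omega_{1,1}$ should match the Laplace transforms of \labelcref{eq:base:top:lattice}. Uniqueness of refined topological recursion solutions then gives $W_{g,n}=\omega^{\textup{Web}}_{g,n}$. Symmetry of $\omega^{\textup{Web}}_{g,n}$ yields symmetry of $\ms{N}_{g,n}$, and the proof of \cref{thm:lattice:rec} identifies $\ms{N}_{g,n}$ with $N_{g,n}$.

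To carry this out, I will write refined topological recursion in its loop-equation (residue) form on the Weber curve. In a local coordinate $z$, the curve takes the form $x=z+z^{-1}$, $y$ linear in $z$, with the involution $z\mapsto 1/z$. The refinement introduces the extra term $\mf{b}\,\partial_{x}$ acting on $\omega_{g-\frac12,n}$, together with the $\mf{b}$-dependent unstable $\omega_{\frac12,1}$. The key steps are as follows.
\begin{enumerate}
\item Multiply \labelcref{eq:rec:toprove} by $L_1 z_1^{L_1-1}$ and sum. Each kernel is a piecewise linear ramp, so its discrete Laplace transform is a rational function; I compute it via $\sum_{L>p} (L-p)z^{L} = z^{p+1}/(1-z)^2$-type identities.
\item Identify the $\mc{D}$-term with the quadratic $\omega\omega$ term of the loop equation, and the $\mc{R}$-term with the $\omega_{0,2}$ contributions.
\item Identify the $\mc{E}$-term. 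The factor $p(L_1-1)$ becomes a derivative in $z_1$ applied to the transform of $\ms{N}_{g-\frac12,n}$, reproducing the $\mf{b}\,d(\cdot/dx)$ term.
\item Track the normalisation. The scaling $(1+b)^{-g}$ together with $\mf{b}^2=b^2/(1+b)$ converts the coefficients $b$ and $\frac{1+b}{2}$ into the refined topological recursion coefficients.
\end{enumerate}

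Part \labelcref{Airy} then follows either by the same computation on the Airy curve with continuous Laplace transforms of \cref{thm:volume:rec}, or as a scaling limit of the Weber statement near $z=\pm1$ combined with the volume asymptotics. I would do it directly, since the kernel Laplace transforms are elementary: $\int_0^\infty e^{-zL}[L-p]_+dL = e^{-zp}/z^2$. The Airy initial conditions $V_{0,3}$, $V_{\frac12,2}$ and $V_{1,1}$ are checked against $\omega^{\textup{Airy}}$ explicitly.

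The main obstacle is matching the $\mc{E}$-term and the $\mf{b}$-dependent parts of refined topological recursion. Refined topological recursion is usually stated with contour integrals and poles also at the fixed points of $\sigma$ and at $\sigma$-images of the $z_i$. I would first reduce it to the polynomial form of the loop equations established in \cite{KO23,Osu24a}, namely
\[
\sum(\text{quadratic}) + \mf{b}\,\frac{d}{dx}\text{-term}=\text{regular at ramification},
\]
then verify this coefficientwise. I expect the awkward piece to be the boundary term $L_1-1$ versus $L_1$, which should come from the $\omega_{\frac12,1}$ contribution shifting $\partial_{x}$ appropriately.
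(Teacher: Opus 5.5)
Your global architecture is the paper's: match the discrete Laplace transform of the asymmetric recursion \labelcref{eq:rec:toprove} with refined topological recursion on the Weber curve, check the three base cases at $\mu=-1$, get symmetry from RTR1, and conclude by the argument in the proof of \cref{thm:lattice:rec}. That argument does use the symmetric recursion, which is proved independently by the Tutte decomposition; only the \emph{statement} of \cref{thm:lattice:rec} must not be assumed.

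You run the comparison from the recursion side, whereas the paper defines $\ms{N}_{g,n}$ as the expansion coefficients of $\omega^{\textup{Web}}_{g,n}$. This is harmless, provided you induct so that all lower $W$'s are already identified with the meromorphic $\omega$'s. For the Airy part it even spares you the existence question for $\ms{V}_{g,n}$ that the paper has to address.

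The genuine gap is in the step you single out as the main obstacle. Passing to loop equations and checking ``regular at ramification'' coefficientwise is not a workable procedure. The expansion at $z_1=0$ converges only for $|z_1|<1$, and its singularities sit exactly at the ramification points $z_1=\pm1$ (and at $z_1=1/z_j$). The quadratic loop equation also involves values at $\sigma(z)=1/z$, outside the disc. None of this is visible term by term in the coefficients $\ms{N}_{g,n}$. Moreover, loop equations alone do not determine the correlators. Uniqueness also needs the pole structure (the projection property), and you have no way to establish that for $W_{g,n}$ directly from the recursion.

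The missing idea is a simple localisation, which is what the paper uses. On $\P^1$ the integrand $K(z_1,z)\,\Rec_{g,n}(z;z_2,\ldots,z_n)$ has poles only at $\mc{R}$, $z_i$ and $\sigma(z_i)$. This follows from RTR2 for the lower correlators and the explicit kernel. By the residue theorem, the recursion operator therefore equals $2\sum_i \Res_{z=z_i}$.

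Absorb the unstable $(0,2)$ terms into the $\mc{R}$-pieces, and $\omega_{\frac12,1}$ into $\omega_{\mc{E}}=2\omega_{\frac12,1}\,\omega_{g-\frac12,n}+\mf{b}\,dx\,d(\omega_{g-\frac12,n}/dx)$. Then the $\mc{E}$- and $\mc{D}$-parts have no poles at any $z_i$. They see only the simple pole of $K(z_1,z)=-\frac{z^3\,dz_1}{2(1-z^2)(z-z_1)(1-zz_1)\,dz}$ at $z=z_1$.

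For the $\mc{E}$-part this gives $-\frac{z_1^3}{(1-z_1^2)^2\,dz_1}\,\omega_{\mc{E}}(z_1)$. Its expansion pairs $p\,\ms{N}_{g-\frac12,n}(p,L_2,\ldots,L_n)$ with $\frac{k(k+p+\mu)}{2}\,z_1^{k+p-1}$, for $k>0$ even. Setting $L_1=k+p$ and using $-\mf{b}\sqrt{1+b}=b$, this is exactly $b\,p\,(L_1+\mu)\,\mc{E}(L_1,p)$. So the shift you anticipated is real, and $\mu=-1$ produces $L_1-1$.

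The $\mc{R}$-terms come from the residues at $z=z_1$ and $z=z_m$, as in the unrefined case. With this localisation your steps 1--4 become a direct computation. The same works on the Airy curve with $K(z_1,z)=\frac{dz_1}{2(z-z_1)(z+z_1)\,dz}$. Without it, your proposal has no concrete mechanism for matching the two sides.
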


The normalisation factor $(1+b)^g$ also appears in the relation between refined topological recursion and $b$-Hurwitz numbers proved in \cite{CDO26}. We note, however, that the sign $(-1)^{n}$ is required in the Weber case, whereas no such sign appears for the Airy curve. The factor of $2$ is purely conventional and reflects our choice of normalisation (in particular, the convention for working with an ``unoriented'' rather than ``oriented'' counting).

\subsection{Refined topological recursion}
The definition of refined spectral curves and refined topological recursion is given in \cite{KO23,Osu24a}. While the original proposal of Chekhov--Eynard \cite{CE06} arose from $\beta$-deformed matrix models, \cite{KO23,Osu24a} both resolves a number of foundational subtleties and formulates refined topological recursion intrinsically at the level of refined spectral curves---much as the Eynard--Orantin construction \cite{EO07} recasts topological recursion as a theory independent of its random-matrix origins.

For the cases treated here, namely the Weber and Airy curves, we present a simplified version that can be directly deduced from loc.~cit. In both cases, the underlying Riemann surface is $\P^1$. Recall that on the projective line there is a unique fundamental bidifferential (also known as the Bergmann kernel), given in any global coordinate by
\begin{equation}
	B(z_1,z_2) \coloneqq \frac{dz_1\,dz_2}{(z_1 - z_2)^2}.
\end{equation}

\begin{definition}\label{def:Weber:Airy}
	The \emph{refined Weber spectral curve} is defined by the data $\mc{S}^{\mathrm{Web}} = (x,\omega_{0,1},\omega_{0,2},\omega_{\frac12,1})$, where:
	\begin{itemize}
		\item
		$x$ is the meromorphic function
		\begin{equation}
			x(z) \coloneqq z + \frac{1}{z},
		\end{equation}
		which is invariant under the involution $\sigma\colon z \mapsto \frac{1}{z}$. The ramification locus of $x\colon \P^1 \to \P^1$ is $\mc{R}=\{+1,-1\}$.

		\item
		$\omega_{0,1}$, $\omega_{0,2}$, and $\omega_{\frac12,1}$ are the meromorphic (bi)differentials
		\begin{equation}
			\omega_{0,1}(z_1) \coloneqq \frac{(1-z_1^2)^2}{2 z_1^3}\,dz_1,
			\quad
			\omega_{0,2}(z_1,z_2) \coloneqq - B(z_1,\sigma(z_2)),
			\quad
			\omega_{\frac12,1}(z_1) \coloneqq \mf{b} \left( \frac{z_1}{1 - z_1^2} + \frac{1+\mu}{2z_1} \right)dz_1.
		\end{equation}
	\end{itemize}
	The \emph{refined Airy spectral curve} is the collection of data $\mc{S}^{\mathrm{Airy}} = (x,\omega_{0,1},\omega_{0,2},\omega_{\frac12,1})$, where:
	\begin{itemize}
		\item
		$x$ is the meromorphic function
		\begin{equation}
			x(z) \coloneqq \frac{z^2}{2},
		\end{equation}
		which is invariant under the involution $\sigma\colon z \mapsto -z$. The ramification locus of $x\colon \P^1 \to \P^1$ is $\mc{R}=\{0,\infty\}$.

		\item
		$\omega_{0,1}$, $\omega_{0,2}$, and $\omega_{\frac12,1}$ are the meromorphic (bi)differentials
		\begin{equation}
			\omega_{0,1}(z_1) \coloneqq -z_1^2\,dz_1,
			\qquad
			\omega_{0,2}(z_1,z_2) \coloneqq - B(z_1,\sigma(z_2)),
			\qquad
			\omega_{\frac12,1}(z_1) \coloneqq -\mf{b}\,\frac{dz_1}{2z_1}.
		\end{equation}
	\end{itemize}
\end{definition}

Both refined spectral curves depend on the parameter $\mf{b}\in\C$ (and, in the Weber case, also on $\mu\in\C$). For simplicity, we suppress this dependence in the notation.

\begin{definition}
	Let $\mc{S}$ be the refined Airy or Weber spectral curve. The \emph{refined topological recursion} produces a family of meromorphic multidifferentials $\omega_{g,n}$ on $\P^1$, for $g \in \frac12\Z_{\ge 0}$ and $n \in \Z_{>0}$ with $2g-2+n>0$, defined by
	\begin{equation}
		\omega_{g,n}(z_1,\ldots,z_n)
		\coloneqq
		\left(
			\sum_{i=1}^n \biggl( \Res_{z=z_i} - \Res_{z=\sigma(z_i)} \biggr)
			-
			\sum_{r \in \mc{R}} \Res_{z=r}
		\right)
		K(z_1,z)\,
		\Rec_{g,n}(z;z_2,\ldots,z_n),
	\end{equation}
	where the recursion kernel is $K(z_1,z) \coloneqq \frac{\int_{\sigma(z)}^{z} \omega_{0,2}(z_1,\cdot)}{4\,\omega_{0,1}(z)}$, and the recursion input is
	\begin{equation}
	\begin{split}
		\Rec_{g,n}(z;z_2,\ldots,z_n)
		&\coloneqq
		\sum_{m=2}^n
			\frac{dx(z)\,dx(z_m)}{(x(z) - x(z_m))^2} 
			\omega_{g,n-1}(z,z_2,\ldots,\widehat{z_m},\ldots,z_n) \,
		+
		\mf{b} \, dx(z) \, d_z \frac{\omega_{g-\frac{1}{2},n}(z,z_2,\ldots,z_n)}{dx(z)}
		\\
		&\qquad +
		\omega_{g-1,n+1}(z,z,z_2,\ldots,z_n)
		+
		\sum_{\substack{g_1+g_2=g \\ I_1 \sqcup I_2 = \{2,\ldots,n\}}}'
			\omega_{g_1,1+|I_1|}(z,z_{I_1})
			\omega_{g_2,1+|I_2|}(z,z_{I_2})
		.
	\end{split}
	\end{equation}
	Here the primed sum runs over splittings with $2g_i-2+(1+|I_i|) \ge 0$, i.e.~it excludes the unstable term $(g_i,1+|I_i|)=(0,1)$, while allowing $(g_i,1+|I_i|)=(\tfrac12,1)$ and $(g_i,1+|I_i|)=(0,2)$. Moreover, $d_z$ denotes the exterior derivative with respect to the variable $z$.
\end{definition}

The first few correlators, corresponding to $2g-2+n=1$, are given by
\begin{equation}\label{eq:omega:Weber:base}
\begin{aligned}
	&\omega_{0,3}^{\textup{Web}}(z_1,z_2,z_3)
	=
	-d_1 d_2 d_3 \left(
		\frac{z_1 z_2 z_3\,(z_1+z_2+z_3+z_1 z_2 z_3)}{(1-z_1^2)(1-z_2^2)(1-z_3^2)}
	\right)
	, \\
	&\omega_{\frac12,2}^{\textup{Web}}(z_1,z_2)
	=
	-\mf{b} \, d_1 d_2 \left(
		\frac{(z_1+z_2)^2 (1-z_1z_2)^2 - z_1z_2 (1-z_1^2) (1-z_2^2)}{(1-z_1^2)^2 (1-z_2^2)^2 (1-z_1z_2)}
		+
		\frac{z_1 z_2 (\mu z_1 z_2 + \mu - 1)}{2 (1-z_1^2)(1-z_2^2)}
	\right)
	, \\
	&\omega_{1,1}^{\textup{Web}}(z_1)
	=
	-d_1\left(
		\frac{
			2 (3z_1^2-1)
			+
			\mf{b}^2 ( 3(\mu^2 -4\mu +3) z_1^4 -6(\mu^2 -2\mu -2)z_1^2 +3\mu^2 -1 )
		}{24(1-z_1^2)^3}
	\right)
	,
\end{aligned}
\end{equation}
for the Weber curve, and by
\begin{equation}\label{eq:omega:Airy:base}
\begin{aligned}
	&\omega_{0,3}^{\textup{Airy}}(z_1,z_2,z_3)
	=
	- d_1 d_2 d_3 \left( \frac{1}{z_1 z_2 z_3} \right)
	, \\
	&\omega_{\frac12,2}^{\textup{Airy}}(z_1,z_2)
	=
	-\mf{b} \, d_1 d_2 \left( \frac{z_1^2+z_1z_2+z_2^2}{2 z_1^2 z_2^2(z_1+z_2)} \right)
	, \\
	&\omega_{1,1}^{\textup{Airy}}(z_1)
	=
	- d_1 \left( \frac{1 + 5\mf{b}^2}{24 z_1^3} \right)
	,
\end{aligned}
\end{equation}
for the Airy curve.

A basic consequence of the recursive definition is that $\omega_{g,n}$ is a polynomial in $\mf{b}$ of degree at most $2g$, with parity congruent to $2g \pmod{2}$. Moreover, setting $\mf{b}=0$ recovers the Chekhov--Eynard--Orantin correlators on the corresponding spectral curve. Further properties of the refined topological recursion correlators are summarised below.

\begin{theorem}[{\cite{KO23,Osu24b,KO25}}]\label{thm:RTR}
	Let $\omega_{g,n}$ be the refined topological recursion correlators associated with either the Weber or Airy spectral curve. Then:
	\begin{description}
		\item[RTR1]\label{prop:RTR1}
		$\omega_{g,n}$ is symmetric in its $n$ variables.

		\item[RTR2]\label{prop:RTR2}
		For $2g-2+n>0$, the poles of $\omega_{g,n}$ in the variable $z_1$ lie in $\mc{R}\cup\{\sigma(z_2),\ldots,\sigma(z_n)\}$.

		\item[RTR3]\label{prop:RTR3}
		For $2g-2+n>0$, $\omega_{g,n}$ has vanishing residues in each variable.
	\end{description}
	For the refined Weber spectral curve, the iterated integrals from $0$ to $\infty$ can be computed explicitly:
	\begin{description}
		\item[RTR4]\label{prop:RTR4}
		For $2g-2+n>0$,
		\begin{equation}
			\int_0^\infty \cdots \int_0^\infty
			\omega^{\textup{Web}}_{g,n}
			=
			(-1)^{n} \,
			\Gamma(2g-2+n) \,\frac{
				B_{2,2g}(\frac{\mu+1}{2}\mf{b} \,|\, \beta^{1/2}, -\beta^{-1/2})
			}{(2g)!},
		\end{equation}
		where $\mf{b}=\beta^{1/2}-\beta^{-1/2}$ and the double Bernoulli polynomials $B_{2,k}(a\,|\,u_1,u_2)$ are defined by
		\begin{equation}
			\frac{w^2 e^{a w}}{(e^{u_1 w}-1)(e^{u_2 w}-1)}
			=
			\sum_{k\ge 0}
				B_{2,k}(a \,|\, u_1,u_2)\,\frac{w^k}{k!} .
		\end{equation}
	\end{description}
\end{theorem}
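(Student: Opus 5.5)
The statement collects results from \cite{KO23,Osu24b,KO25}, so my plan is to reduce each item to the structural theory of refined topological recursion and then specialise to the genus-zero curves of \cref{def:Weber:Airy}. I would treat \hyperref[prop:RTR2]{RTR2} and \hyperref[prop:RTR3]{RTR3} first, by induction on $2g-2+n$. The recursion writes $\omega_{g,n}(z_1,\ldots)$ as a sum of residues of $K(z_1,z)\Rec_{g,n}(z;\ldots)$. As a function of $z_1$, the kernel $K(z_1,z)=\int_{\sigma(z)}^{z}\omega_{0,2}(z_1,\cdot)/(4\omega_{0,1}(z))$ is a difference of simple-pole differentials, with poles at $z_1=\sigma(z)$ and $z_1=z$. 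Evaluating residues at $z\in\mc{R}$ therefore produces poles in $z_1$ only at $\mc{R}$. The residues at $z=z_i,\sigma(z_i)$, which are needed because of the $\mf{b}$-term and the $\omega_{0,2}$ insertions in $\Rec$, produce poles only at $\sigma(z_i)$. The terms at $z_1=z_i$ cancel against those from $\omega_{0,2}(z,z_i)$ in the $\Rec$ input, which is exactly why the extra $\Res_{z=z_i}-\Res_{z=\sigma(z_i)}$ terms are built in. Vanishing residues then follow because each contribution is a total derivative: $K$ is an exact primitive in $z_1$ of $\omega_{0,2}$, which has no residues, and one checks this is preserved by the residue operations.

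The main obstacle is \hyperref[prop:RTR1]{RTR1}, symmetry in $z_1$ versus $z_2,\ldots,z_n$, since the recursion singles out $z_1$. Here I would follow the strategy of \cite{KO23}. First I would show by induction that the $\omega_{g,n}$ satisfy the refined quadratic loop equations: the combination $\sum_{z'\in x^{-1}(x(z))}\bigl(\Rec_{g,n}(z')+2\omega_{0,1}(z')\omega_{g,n}(z',\ldots)\bigr)$, suitably symmetrised, is holomorphic at $\mc{R}$ and at the points $\sigma(z_i)$. The refined term $\mf{b}\,dx\,d(\omega/dx)$ is the new feature here, and it is handled because the curves are genus zero with a single involution. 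Second, I would prove a uniqueness lemma: a multidifferential satisfying these loop equations, with the pole structure of \hyperref[prop:RTR2]{RTR2} and vanishing residues, is recovered by the residue formula with \emph{any} variable distinguished. For that step I would compute $\omega_{g,n}(z_1,z_2,\ldots)$ and $\omega_{g,n}(z_2,z_1,\ldots)$ via two successive residue formulas. The resulting double-residue expressions can be exchanged, because the only obstruction is a residue at $z=z'$, and that residue vanishes by the loop equations. The bookkeeping of the poles at $\sigma(z_i)$ in this exchange is the delicate point.

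For \hyperref[prop:RTR4]{RTR4} on the Weber curve, I would first set up free energies $F_g$, for $g\ge 2$ and the half-integer cases, with the dilaton equation $\int_0^\infty\omega_{g,n+1}(\cdot,z)\,\Phi=(2g-2+n)\,\omega_{g,n}$ for a primitive $\Phi$ of $\omega_{0,1}$. The limits $0,\infty$ are the poles of $\omega_{0,1}$, so the boundary terms must be controlled. Iterating the dilaton equation gives the factor $\Gamma(2g-2+n)$ times the $n=0$ quantity, up to the sign $(-1)^n$ coming from orientation of the path. Finally, I would compute $F_g$ using the variational formula of \cite{KO25}. Deforming $\mu$ (equivalently, the 't Hooft parameter) shifts $\omega_{0,1}$ by $dz/z$, so $\partial F_g$ is given by an integral of $\omega_{g,1}$. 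Solving the resulting difference equation, or comparing with the Gaussian $\beta$-ensemble, whose free energy is a Barnes double-gamma function, yields the generating function $w^2e^{aw}/((e^{\beta^{1/2}w}-1)(e^{-\beta^{-1/2}w}-1))$ with $a=\tfrac{\mu+1}{2}\mf{b}$. Extracting the coefficient of $w^{2g}$ then gives the double Bernoulli polynomial in the formula.
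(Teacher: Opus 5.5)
Your outline of RTR1--RTR3 is essentially the argument of \cite{KO23}: loop equations, then an exchange of iterated residues. That is all the paper invokes for these three properties. One correction: $K(z_1,z)$ is not exact in $z_1$. As a differential in $z_1$ it has simple poles at $z_1=z$ and $z_1=\sigma(z)$, with opposite, nonzero residues. It is this cancellation of the total residue, not exactness, that kills the $z_1$-residues at $\mc{R}$.

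The genuine gap is in RTR4.
\begin{itemize}
\item The identity you propose to iterate, $\int_0^\infty\omega_{g,n+1}(\cdot,z)\,\Phi(z)=(2g-2+n)\,\omega_{g,n}$, is not the dilaton equation and is not valid as written. The dilaton equation is the residue identity $\sum_{r\in\mc{R}}\Res_{z=r}\Phi(z)\,\omega_{g,n+1}(z_1,\ldots,z_n,z)=(2g-2+n)\,\omega_{g,n}(z_1,\ldots,z_n)$.
\item Your path integral diverges. Here $\Phi=\frac{z^2}{4}-\log z-\frac{1}{4z^2}$ has a double pole at $0$ and $\infty$, while, for instance, $\omega_{0,3}$ does not vanish at $z=0$. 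Iterated residues at $\mc{R}$ also do not directly give iterated integrals over $[0,\infty]$.
\item The missing idea is that integration over $[0,\infty]$ is the derivative in the scaling ('t~Hooft) parameter. Rescaling $x,y$ by $\sqrt{t}$ gives $\omega_{0,1}\mapsto t\,\omega_{0,1}$ and $\partial_t\omega_{0,1}|_{x}=-\frac{dz}{z}=\int_0^\infty\omega_{0,2}(z,\cdot)$. The variational formula of \cite{Osu24b} then yields $\int_0^\infty\cdots\int_0^\infty\omega_{g,n}=\partial_t^n F_g(t)$ for $2g-2+n>1$.
\item The rescaling leaves $\omega_{0,2}$ and $\omega_{\frac12,1}$ unchanged. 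The recursion therefore makes $\omega_{g,n}$ scale as $t^{2-2g-n}$, so $F_g(t)=t^{2-2g}F_g(1)$ for $g>1$. Then $\partial_t^n t^{2-2g}\big|_{t=1}=(-1)^n\Gamma(2g-2+n)/\Gamma(2g-2)$. This, not the orientation of the path, is the source of both $\Gamma(2g-2+n)$ and $(-1)^n$.
\item The cases $g\le 1$, where $F_g$ has logarithmic terms, and the base cases $2g-2+n=1$ need separate treatment.
\end{itemize}

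You also conflate deforming $\mu$ with deforming the 't~Hooft parameter, and these are different deformations. The parameter $\mu$ appears only in $\omega_{\frac12,1}$ and leaves $\omega_{0,1}$ untouched. It enters the answer only through the argument $\frac{\mu+1}{2}\mf{b}$ of $B_{2,2g}$, whereas the $t$-deformation produces the $\Gamma$-factor. Once these points are fixed, the only remaining input is the value $F_g(1)$. The paper takes this from \cite[theorem~3.6]{KO25}, which is close in spirit to the difference-equation route you sketch at the end.
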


\begin{proof}
	The first three properties are proved in \cite[theorem~2.17]{KO23}. The last statement follows from \cite[corollary~4.3]{Osu24b} together with \cite[theorem~3.6]{KO25}. More explicitly, rescale $x$ and $y$ by $\sqrt{t}$ for $t \in\C^*$. In particular, $\omega^{\textup{Web}}_{0,1}$ is rescaled by $t$. The paper \cite{KO25} computes the \emph{free energies} $F_g(t)$ of this rescaled Weber curve in terms of double Bernoulli polynomials for $g>1$ (with $F^{\textup{Web}}_0$, $F^{\textup{Web}}_{1/2}$, and $F^{\textup{Web}}_1$ treated separately). Moreover, the variational formula of \cite{Osu24b} implies that for $2g-2+n>1$,
	\begin{equation}\label{eq:variational-formula}
		\int_0^\infty \cdots \int_0^\infty
		\omega^{\textup{Web}}_{g,n}(z_1,\ldots,z_n;t)
		=
		\frac{\partial^n}{\partial t^n}
		F^{\textup{Web}}_g(t) .
	\end{equation}
	Substituting the explicit expression for $F^{\textup{Web}}_g(t)$ into \labelcref{eq:variational-formula} and setting $t=1$ yields \hyperref[prop:RTR4]{RTR4}. Note that $t$ above is denoted $m$ in \cite{KO25}, while $\mu$ in the present paper has the opposite sign convention to the one used in \cite{KO25}.
\end{proof}

\subsection{Laplace transforming the Weber correlators}
\label{ssec:Weber}
In this section, we prove point~(\labelcref{Weber}) of \cref{prop:RTR}, which concerns the refined lattice point count $N_{g,n}$. To this end, define $\ms{N}_{g,n}$ by discrete Laplace transforming the refined correlators, i.e.~expanding them at $z_i=0$:
\begin{equation}
	\omega_{g,n}^{\mathrm{Web}}(z_1,\ldots,z_n)
	\eqqcolon
	(-1)^{n} \frac{2}{(1+b)^{g}}
	\sum_{L_1,\ldots,L_n > 0}
	\ms{N}_{g,n}(L_1,\ldots,L_n)
	\prod_{i=1}^n L_i \, z_i^{L_i - 1} \, dz_i .
\end{equation}
Since $z_i=0$ is not a ramification point, property~\hyperref[prop:RTR2]{RTR2} of \cref{thm:RTR} ensures that this expansion is well defined, while \hyperref[prop:RTR1]{RTR1} implies that $\ms{N}_{g,n}$ is symmetric in its $n$ variables. Point~(\labelcref{Weber}) of \cref{prop:RTR} amounts to the identity $N_{g,n} = \ms{N}_{g,n}\big|_{\mu=-1}$. The proof proceeds by showing that both quantities satisfy the same recursion for $2g-2+n>1$, and that the initial data in the cases $2g-2+n=1$ coincide. The recursion for $\ms{N}_{g,n}$ is obtained by discrete Laplace transforming the refined topological recursion; for completeness, we derive it for general~$\mu$. The matching with the recursion for $N_{g,n}$ will then rely on the specialisation $\mu=-1$.

For $2g-2+n=1$, a direct computation from \cref{eq:omega:Weber:base} gives
\begin{equation}
\begin{aligned}
	\ms{N}_{0,3}(L_1,L_2,L_3)
	&=
	\frac{1 + (-1)^{L_1+L_2+L_3}}{2}
		\frac{1}{2}, \\
	\ms{N}_{\frac{1}{2},2}(L_1,L_2)
	&=
	\frac{1 + (-1)^{L_1+L_2}}{2} \,
		b \frac{\max(L_1,L_2) + \mu}{4}, \\
	\ms{N}_{1,1}(L_1)
	&=
	\frac{1 + (-1)^{L_1}}{2} \, \frac{
		(1+b)(L_1^2 - 4)
		+
		b^2 (5L_1^2 + 12\mu L_1 + 6\mu^2 - 2)
	}{96} .
\end{aligned}
\end{equation}
We emphasise again the specialisation $\mu=-1$, which ensures that the $(\tfrac{1}{2},2)$ and $(1,1)$ initial data coincide with those of $N_{g,n}$.

For $2g-2+n>1$, the recursion for $\ms{N}_{g,n}$ follows from that for $\omega_{g,n}$. The argument proceeds in three steps: the recursion input $\Rec_{g,n}$ is first split into three pieces corresponding to the $\mc{R}$-, $\mc{E}$-, and $\mc{D}$-terms; the resulting residues are then evaluated by contour deformation; finally, each contribution is expanded at $z_i=0$ to read off the recursion for $\ms{N}_{g,n}$.

The splitting of $\Rec_{g,n}$ is
\begin{equation}\label{RTRdecomposition}
	\Rec_{g,n}(z;z_2,\ldots,z_n)
	=
	\sum_{m=2}^n \omega_{\mc{R}_m}(z) + \omega_{\mc{E}}(z) + \omega_{\mc{D}}(z) ,
\end{equation}
where
\begin{equation}
\begin{aligned}
	\omega_{\mc{R}_m}(z)
	&\coloneqq
	\omega_{g,n-1}(z,z_2,\ldots,\widehat{z_m},\ldots,z_n)
	\left(
		2\omega_{0,2}(z,z_m) + \frac{dx(z)dx(z_m)}{(x(z) - x(z_m))^2}
	\right)
	,\\
	\omega_{\mc{E}}(z)
	&\coloneqq
	2 \omega_{\frac12,1}(z) \, \omega_{g-\frac{1}{2},n}(z,z_2,\ldots,z_n)
	+
	\mf{b} \, dx(z) \, d_z \frac{\omega_{g-\frac{1}{2},n}(z,z_2,\ldots,z_n)}{dx(z)}
	,\\
	\omega_{\mc{D}}(z)
	&\coloneqq
	\omega_{g-1,n+1}(z,z,z_2,\ldots,z_n)
	+
	\sum_{\substack{g_1+g_2=g \\ I_1 \sqcup I_2 = \{2,\ldots,n\}}}^{\textup{stable}}
	\omega_{g_1,1+|I_1|}(z,z_{I_1})
	\omega_{g_2,1+|I_2|}(z,z_{I_2})
	.
\end{aligned}
\end{equation}
The remaining variables play a spectator role and are hence suppressed from the notation. Note that the sum in $\omega_{\mc{D}}$ is restricted to \emph{stable} topologies; the unstable contributions $(0,2)$ and $(\tfrac12,1)$ have been absorbed into the $\mc{R}$- and $\mc{E}$-terms, respectively.

Next, observe that the integrand in the recursion formula has poles in the integration variable $z$ only at the ramification points, and at $z=z_i$ and $z=\sigma(z_i)$. Since we work on $\P^1$, the residue theorem gives
\begin{equation}
	\left(
		\sum_{i=1}^n \biggl( \Res_{z=z_i} - \Res_{z=\sigma(z_i)} \biggr)
		-
		\sum_{r \in \mc{R}} \Res_{z=r}
	\right)
	=
	2 \sum_{i=1}^n \Res_{z=z_i} .
\end{equation}
We now evaluate these residues case by case.

Let us begin with the $\mc{E}$-term, which is specific to the refined setting. By \hyperref[prop:RTR2]{RTR2} in \cref{thm:RTR}, the form $\omega_{\mc{E}}(z)$ has no poles at $z=z_i$. For the Weber curve, however, the kernel $K(z_1,z)$ has a simple pole at $z=z_1$, and is given by
\begin{equation}
	K(z_1,z)
	=
	- \frac{z^3 \, dz_1}{2(1-z^2)(z-z_1)(1-zz_1) \, dz} .
\end{equation}
Therefore,
\begin{equation}
	2 \sum_{i=1}^n \Res_{z=z_i} \, K(z_1,z) \, \omega_{\mc{E}}(z)
	=
	- \frac{z_1^3}{(1-z_1^2)^2 \, dz_1}
	\omega_{\mc{E}}(z_1) .
\end{equation}
Expanding $\omega_{\mc{E}}$ at $z_i=0$ yields
\begin{multline}
	- \frac{z_1^3}{(1-z_1^2)^2 \, dz_1}\,
	\omega_{\mc{E}}(z_1)
	= 
	(-1)^{n} \frac{2}{(1+b)^{g}}
	\\
	\times
	b
	\sum_{k,p,L_2,\ldots,L_n > 0}
		\frac{1+(-1)^k}{2}\,
		\frac{(k + p + \mu)k}{2}\,
		\ms{N}_{g-\frac12,n}(p,L_2,\ldots,L_n)\,
		p \, z_1^{k+p-1} dz_1
		\prod_{i=2}^n L_i \, z_i^{L_i - 1} \, dz_i .
\end{multline}
Extracting the coefficient of $(-1)^{n} \frac{2}{(1+b)^{g}}\bigl[\prod_{i=1}^n L_i \, z_i^{L_i - 1} \, dz_i\bigr]$, and assuming $\sum_i L_i$ is even, we obtain
\begin{equation}
	b
	\sum_{p > 0}
		p (L_1 + \mu)
		\frac{[L_1 -p]_+}{2 L_1}
		\ms{N}_{g-\frac12,n}(p,L_2,\ldots,L_n).
\end{equation}
The analogous computations for the $\mc{R}$- and $\mc{D}$-terms do not differ from the unrefined setting, and were carried out in detail in \cite{Nor13,GMM}. Collecting all contributions gives
\begin{equation}
\begin{split}
	N_{g,n}(L_1,\ldots,L_n)
	&=
	\sum_{m=2}^n \sum_{p > 0}
		p \, \mc{R}(L_1,L_m,p)
		N_{g,n-1}(p,L_2,\ldots,\widehat{L_m},\ldots,L_n) \\
	&\qquad
	+
	b \sum_{p > 0}
		p(L_1 + \mu) \, \mc{E}(L_1,p)
		N_{g-\frac{1}{2},n}(p,L_2,\ldots,L_n) \\
	&\qquad\qquad
	+
	\sum_{p,q > 0}
		p q \, \mc{D}(L_1,p,q)
		\Bigg(
			\frac{1+b}{2} N_{g-1,n+1}(p,q,L_2,\ldots,L_n) \\
	&\qquad\qquad\qquad 
			+
			\sum_{\substack{g_1+g_2 = g \\ I_1 \sqcup I_2 = \{2,\ldots,n\}}}
				N_{g_1,1+|I_1|}(p,L_{I_1})
				N_{g_2,1+|I_2|}(q,L_{I_2})
		\Bigg) .
\end{split}
\end{equation}
Here $\mc{R}$, $\mc{E}$, and $\mc{D}$ are as in \cref{eq:kernels}. Specialising to $\mu=-1$ makes the prefactor in front of the $\mc{E}$-kernel match the corresponding $b$-term in \cref{thm:lattice:rec}, so that the recursion above agrees with the refined lattice point recursion, completing the proof of point~(\labelcref{Weber}) of \cref{prop:RTR}.

\subsection{Laplace transforming the Airy correlators}
\label{ssec:Airy}
In this section, we prove point~(\labelcref{Airy}) of \cref{prop:RTR}. The strategy parallels the Weber case, but now uses the (continuous) Laplace transform. For $\Re(z_i)>0$, $i \in [n]$, suppose that there exists a continuous piecewise polynomial $\ms{V}_{g,n}$, defined for $2g-2+n\ge 1$, such that
\begin{equation}
	\omega^{\textup{Airy}}_{g,n}(z_1,\ldots,z_n)
	\eqqcolon
	\frac{2}{(1+b)^g}
	\int_0^{+\infty} \cdots \int_0^{+\infty}
		\ms{V}_{g,n}(L_1,\ldots,L_n)
		\prod_{i=1}^n L_i\,e^{-z_i L_i}\,dL_i\,dz_i.
\end{equation}
Unlike in the Weber case, the existence of $\ms{V}_{g,n}$ is not automatic. If it exists, however, it is unique: continuity rules out measure-zero ambiguities. Existence will follow from showing that $\ms{V}_{g,n}=V_{g,n}$.

The proof proceeds by induction on $2g-2+n$. The base cases $2g-2+n=1$ follow from direct computation. Assume that $\ms{V}_{g_0,n_0}=V_{g_0,n_0}$ holds for all $(g_0,n_0)$ with $2g_0-2+n_0 < 2g-2+n$. We now split the recursion input into $\mc{R}$-, $\mc{E}$-, and $\mc{D}$-terms as in \cref{RTRdecomposition}. Again, we begin with the $\mc{E}$-term, which is specific to the refined setting. For the Airy curve, the recursion kernel is
\begin{equation}
	K(z_1,z)
	=
	\frac{dz_1}{2(z-z_1)(z+z_1)\,dz},
\end{equation}
and has a simple pole at $z=z_1$. The $\omega_{\mc{E}}$ contribution therefore reads
\begin{equation}
\begin{split}
	\frac{\omega_{\mc{E}}(z_1)}{2z_1^2\,dz_1}
	&=
	\frac{b}{(1+b)^{g}}
	\int_0^{+\infty}\cdots\int_0^{+\infty}
	\frac{pz_1+2}{z_1^3}\,
	V_{g-\frac12,n}(p,L_2,\ldots,L_n)\,
	p\,e^{-pz_1}\,dp\,dz_1\,
	\prod_{i=2}^n L_i\,e^{-z_i L_i}\,dL_i\,dz_i \\
	&=
	\frac{b}{(1+b)^{g}}
	\int_0^{+\infty}\cdots\int_0^{+\infty}
	pq(p+q)\,
	V_{g-\frac12,n}(p,L_2,\ldots,L_n)\,
	e^{-(p+q)z_1}\,dp\,dq\,dz_1\,
	\prod_{i=2}^n L_i\,e^{-z_i L_i}\,dL_i\,dz_i \\
	&=
	\frac{2b}{(1+b)^{g}}
	\int_0^{+\infty} \cdots \int_0^{+\infty}
		p \frac{[L_1-p]_+}{2}\,
		V_{g-\frac12,n}(p,L_2,\ldots,L_n)\,dp\,
		\prod_{i=1}^n L_i\,e^{-z_i L_i}\,dL_i\,dz_i \\
	&=
	\frac{2}{(1+b)^{g}}
	\int_0^{+\infty} \cdots \int_0^{+\infty}
	b\left(\int_0^{+\infty}
		pL_1\,\mc{E}(L_1,p)\,
		V_{g-\frac12,n}(p,L_2,\ldots,L_n)\,dp\right)
	\prod_{i=1}^n L_i\,e^{-z_i L_i}\,dL_i\,dz_i.
\end{split}
\end{equation}
The first equality uses the induction hypothesis together with the Laplace representation of $\omega^{\textup{Airy}}_{g-1/2,n}$. For the second equality, we write $(pz_1+2)z_1^{-3}$ as a Laplace transform in $q$. The third equality follows from the change of variables $(p,q)\mapsto (p,L_1)$ with $L_1=p+q$, and the last equality is simply the definition of the kernel $\mc{E}$.

The contributions of the $\mc{R}$- and $\mc{D}$-terms are the same as in the unrefined setting, and we omit them. Recognising the sum of all contributions as the Laplace transform of the right-hand side of the volume recursion \labelcref{eq:volume:rec}, we obtain
\begin{equation}
	\int_0^{+\infty}\cdots\int_0^{+\infty}
	\ms{V}_{g,n}(L_1,\ldots,L_n)\prod_{i=1}^n L_i\,e^{-z_i L_i}\,dL_i\,dz_i
	=
	\int_0^{+\infty}\cdots\int_0^{+\infty}
	V_{g,n}(L_1,\ldots,L_n)\prod_{i=1}^n L_i\,e^{-z_i L_i}\,dL_i\,dz_i.
\end{equation}
Thus $\ms{V}_{g,n}$ and $V_{g,n}$ agree almost everywhere. Since both are continuous, it follows that $\ms{V}_{g,n}=V_{g,n}$, completing the induction.

\section{Properties of the refined lattice point count}
\label{sec:properties}

The goal of this section is to deduce the properties of the refined lattice point count stated in \cref{thm:polynomiality}. The main inputs are the result of \cref{sec:RTR} asserting that refined topological recursion on the Weber curve computes the lattice point count, and the the recursion for $N_{g,n}$ proved in \cref{sec:defs}.

\subsection{Polynomiality properties}
We begin by proving a structural description of the refined lattice point count $N_{g,n}(L)$ as a consequence of refined topological recursion. Consider the polytope $Q(L)\subset \R_{\ge 0}^{n^2}$ defined by the matrix equation $M\alpha=L$, where $M$ is the $n\times n^2$ matrix whose columns are the vectors $2e_i$ for $i\in[n]$ and $e_i+e_j$ for distinct $i,j\in[n]$, and where $e_i$ denotes the $i$th standard basis vector of $\R^n$. For $L\in\Z_{>0}^n$, denote by $Q^{\Z}(L)$ the set of integral points in $Q(L)$. The refined lattice point count $N_{g,n}(L)$ can be expressed as a finite sum of polynomially weighted lattice point counts in shifted polytopes $Q(L-m)$, for certain shifts $m\in\Z_{\ge 0}^n$.

\begin{lemma}\label{lem:poly:weighted:count}
	The refined lattice point count can be written as
	\begin{equation}
		N_{g,n}(L)
		=
		\sum_{m \in \Z^n_{\geq 0}} \sum_{\alpha \in Q^{\Z}(L-m)} p_m(\alpha),
	\end{equation}
	where each $p_m(\alpha)$ is a polynomial in $\alpha$, and all but finitely many $p_m$ vanish.
\end{lemma}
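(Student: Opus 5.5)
The plan is to derive the lemma from the rational structure of the Weber correlators, through the discrete Laplace transform of \cref{prop:RTR}. By part (\labelcref{Weber}) of \cref{prop:RTR}, $N_{g,n}(L)$ is, up to the factor $(-1)^n\frac{2}{(1+b)^g}$, the coefficient of $\prod_i L_i z_i^{L_i-1}dz_i$ in the expansion of $\omega^{\textup{Web}}_{g,n}$ at $z_i=0$. The first step is therefore to establish the shape of $\omega^{\textup{Web}}_{g,n}$, which I expect to be
\[
	\omega^{\textup{Web}}_{g,n}(z_1,\ldots,z_n)
	=
	\frac{P_{g,n}(z_1,\ldots,z_n)}{\prod_{i=1}^n (1-z_i^2)^{a_i}\prod_{i\neq j}(1-z_iz_j)^{c_{ij}}}
	\prod_{i=1}^n dz_i,
\]
where $P_{g,n}$ is a polynomial and the exponents $a_i,c_{ij}$ are finite. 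I will prove this by induction on $2g-2+n$ using the recursion. By \hyperref[prop:RTR2]{RTR2}, the poles in $z_1$ lie only at $\pm1$ and at $\sigma(z_j)=1/z_j$. The kernel $K$ and the residues at $z=z_i$ introduce only the denominators $(1-z_1^2)$ and $(1-z_1z_j)$. The base cases in \cref{eq:omega:Weber:base} already have this form. The main thing to check is that no pole at $z_i=0$ or at $z_i=\infty$ beyond a polynomial numerator appears, and that no other factors such as $(z_i-z_j)$ arise. The latter are excluded by symmetry together with \hyperref[prop:RTR2]{RTR2}.

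The second step is to expand such a rational function combinatorially. Because of the Laplace normalisation, the factor $\prod L_i$ must be divided out, so I will work with a primitive. Concretely, I write $\omega = d_1\cdots d_n F$ with $F$ of the same rational shape, as the base cases suggest. The form $F$ exists because residues vanish by \hyperref[prop:RTR3]{RTR3}, and the pole structure makes the primitive rational of the same type. The coefficient of $\prod z_i^{L_i}$ in $F$ is then exactly the quantity $N_{g,n}(L)$, up to the constant prefactor.

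Next I expand each factor as a power series:
\[
	(1-z_i^2)^{-a} = \sum_{\alpha_{ii}\ge 0}\binom{\alpha_{ii}+a-1}{a-1} z_i^{2\alpha_{ii}},
	\qquad
	(1-z_iz_j)^{-c} = \sum_{\alpha_{ij}\ge0}\binom{\alpha_{ij}+c-1}{c-1}(z_iz_j)^{\alpha_{ij}}.
\]
The binomial coefficients are polynomials in the summation index. Each monomial $z^m$ of the numerator then shifts the target exponent from $L$ to $L-m$. The exponent constraint reads $2\alpha_{ii}+\sum_{j\ne i}\alpha_{ij}=L_i-m_i$, which is precisely $M\alpha=L-m$ with columns $2e_i$ and $e_i+e_j$. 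The resulting summand is the product of the binomials times the numerator coefficient, a polynomial $p_m(\alpha)$ in $\alpha$. Since the numerator has finitely many monomials, only finitely many $p_m$ are nonzero.

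One bookkeeping issue needs care. The matrix $M$ has $n^2$ columns, so each unordered pair appears twice, as $\alpha_{ij}$ and $\alpha_{ji}$. I will either use only the variables with $i<j$ and set the rest to zero, or absorb the double counting into the polynomial. The cleanest route is to set $p_m(\alpha)=0$ unless $\alpha_{ji}=0$ for $i<j$, but then $p_m$ is not a polynomial. Instead I will symmetrise: replace $(1-z_iz_j)^{-c}$ by an expression whose coefficient at total index $\alpha_{ij}+\alpha_{ji}=k$ is distributed over the $k+1$ pairs. Concretely, I choose a polynomial $q(\alpha_{ij},\alpha_{ji})$ with $\sum_{s+t=k}q(s,t)=\binom{k+c-1}{c-1}$. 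Such a $q$ exists because the sum over $s+t=k$ of a polynomial is a polynomial in $k$ that can be matched degree by degree; for example, a suitable multiple of $\binom{s+t+c-1}{c}$ works after correction.

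The main obstacle is the first step, the rational form with only the denominators $(1-z_i^2)$ and $(1-z_iz_j)$ and a polynomial numerator, that is, controlling behaviour at $0$ and $\infty$. I would handle this by checking that the $\omega_{g,n}$ are odd under $\sigma$ and regular away from the allowed poles. This is already used implicitly when the paper expands at $z_i=0$ and relies on \hyperref[prop:RTR2]{RTR2}.
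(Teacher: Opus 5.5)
Your argument follows the paper's proof. You expand the iterated primitive $F_{g,n}=\int_0^{z_1}\cdots\int_0^{z_n}\omega^{\textup{Web}}_{g,n}$, whose poles lie only on $z_iz_j=1$ (with $i=j$ allowed). You read off products of binomial coefficients as polynomial weights and identify the exponent constraints with $M\alpha=L-m$. The paper takes the pole structure of $F_{g,n}$ directly from \cite[lemma~3.5]{KO23} rather than rederiving it from RTR1--RTR3. Your concern about $z_i=0,\infty$ is unnecessary: once the polar locus in $\C^n$ lies in $\{z_i=\pm1\}\cup\{z_iz_j=1\}$, the numerator is a polynomial by unique factorisation.

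The step that is wrong as written is the symmetrisation. You cannot in general find a polynomial $q$ with $\sum_{s+t=k}q(s,t)=\binom{k+c-1}{c-1}$ by matching ``degree by degree''. For every polynomial $q$, the sum $\sum_{s=0}^{k}q(s,k-s)$ is a polynomial in $k$ that vanishes at $k=-1$. To see this, expand $(k-s)^j$ and use that $\sum_{s=0}^k s^a$ vanishes at $k=-1$ for all $a\ge 0$. Hence no such $q$ exists when $c=1$.

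This case really occurs: the primitive of $\omega^{\textup{Web}}_{\frac12,2}$ has a simple factor $1-z_1z_2$ in its denominator. Your diagonal expansion has the same defect if some $a_i=0$, since $\binom{\alpha_{ii}-1}{-1}$ is not polynomial in $\alpha_{ii}$.

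The repair is the paper's device. Multiply numerator and denominator by extra factors so that the denominator reads $\prod_{i,j=1}^n(1-z_iz_j)^{d_{ij}+1}$ over \emph{ordered} pairs, and expand each ordered factor on its own. The weight then becomes $c_m\prod_{i,j}\binom{d_{ij}+\alpha_{ij}}{d_{ij}}$, which is automatically polynomial in all $n^2$ coordinates of $\alpha$. By the convolution identity, this is a valid choice of your $q$, available once each unordered pair has total exponent at least $2$. No separate symmetrisation argument is needed.
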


The point of \cref{lem:poly:weighted:count} is that it reduces the analysis of $N_{g,n}$ to (finite sums of) \emph{polynomially} weighted lattice point counts. A priori, the definition of $N_{g,n}$ only exhibits it as a sum of \emph{rationally} weighted counts, since the MON is a rational function of the edge lengths.

\begin{proof}
	We use point~(\labelcref{Weber}) of \cref{prop:RTR}, which states that the refined lattice point counts $N_{g,n}$ are encoded in the symmetric differentials $\omega_{g,n}^{\textup{Web}}$ produced by refined topological recursion as
	\begin{equation}
		\omega_{g,n}^{\textup{Web}}(z_1,\ldots,z_n)
		=
		(-1)^{n} \frac{2}{(1+b)^g}
		\sum_{L_1,\ldots,L_n > 0}
		N_{g,n}(L_1,\ldots,L_n)
		\prod_{i=1}^n L_i \, z_i^{L_i - 1} \, dz_i.
	\end{equation}
	Fix $g\in\frac{1}{2}\Z_{\ge 0}$ and $n\in\Z_{>0}$ with $2g-2+n>0$. By \cite[lemma~3.5]{KO23}, the iterated integral
	\begin{equation}\label{def of F_{g,n}}
		F_{g,n}(z_1,\ldots,z_n)
		\coloneqq
		\int_{0}^{z_1}\cdots\int_{0}^{z_n} \omega_{g,n}^{\textup{Web}}
	\end{equation}
	is a meromorphic function of $z_1,\ldots,z_n$ whose poles lie only at $z_i=\sigma(z_j)$, for $i,j\in[n]$. In particular, one can write
	\begin{equation}
		F_{g,n}(z_1,\ldots,z_n)
		=
		\frac{P_{g,n}(z_1,\ldots,z_n)}{\prod_{i,j=1}^n (1-z_i z_j)^{d_{i,j}+1}},
	\end{equation}
	for some integers $d_{i,j}\in\Z_{\ge 0}$ and some polynomial $P_{g,n}$. Expanding at $z_i=0$ gives
	\begin{equation}
		F_{g,n}(z_1,\ldots,z_n)
		=
		P_{g,n}(z_1,\ldots,z_n)
		\prod_{i,j=1}^n
		\sum_{\alpha_{i,j} \ge 0}
			\binom{d_{i,j}+\alpha_{i,j}}{d_{i,j}}
			(z_i z_j)^{\alpha_{i,j}}.
	\end{equation}
	Write $P_{g,n}(z_1,\ldots,z_n) = \sum_m c_m\,z_1^{m_1}\cdots z_n^{m_n}$, where the sum over $m \in \Z_{\ge 0}^n$ is finite. Extracting the coefficient of $z_i^{L_i}$ then expresses $N_{g,n}$ as a finite sum of terms of the form
	\begin{equation}\label{eq:poly_weighted}
		(-1)^{n}
		\frac{(1+b)^g}{2}
		\sum_{\alpha \in Q^{\Z}(L-m)}
			c_m
			\prod_{i,j=1}^n
				\binom{d_{i,j}+\alpha_{i,j}}{d_{i,j}},
	\end{equation}
	where $Q(L-m)\subset\R_{\ge 0}^{n^2}$ is the polytope determined by $M\alpha=L-m$ (equivalently, by $\sum_{j=1}^n(\alpha_{i,j}+\alpha_{j,i})=L_i-m_i$ for $i\in[n]$). Since the binomial coefficients in \cref{eq:poly_weighted} are polynomials in $\alpha_{i,j}$, this has the desired form.
\end{proof}

\begin{proposition}\label{prop:poly_props}
	The refined lattice point count $N_{g,n}(L)$ is a symmetric, rational, piecewise quasipolynomial of period $2$ in the boundary lengths $(L_1,\ldots,L_n)$. Moreover, it is a polynomial in $b$ of degree at most $2g$.
\end{proposition}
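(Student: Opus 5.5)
The plan is to read off each property either from \cref{lem:poly:weighted:count}, from the symmetry of the refined topological recursion correlators, or from the definition of the MON.

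\emph{Piecewise quasipolynomiality.} By \cref{lem:poly:weighted:count}, $N_{g,n}(L)$ is a finite sum over shifts $m$ of polynomially weighted lattice point counts $\sum_{\alpha\in Q^{\Z}(L-m)} p_m(\alpha)$ in the parametric polytopes $Q(L-m)=\{\alpha\in\R_{\ge0}^{n^2} : M\alpha=L-m\}$. I will invoke the classical theory of vector partition functions (Sturmfels, Brion--Vergne): for a fixed integral matrix $M$ whose columns span a pointed cone, the weighted count $L\mapsto\sum_{\alpha\in Q^{\Z}(L)}p(\alpha)$ is a piecewise quasipolynomial on the chambers of the chamber complex of $M$. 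The columns of $M$ are $2e_i$ and $e_i+e_j$, so:
\begin{itemize}
\item the walls are spanned by subsets of these vectors;
\item the period is governed by the lattice indices of the relevant sublattices, which are all powers of $2$.
\end{itemize}
The main obstacle is showing that the period is exactly $2$ rather than some larger power of $2$. For this I would analyse the unimodularity defect directly: any basis drawn from the columns of $M$ generates a sublattice containing $2\Z^n$. This needs checking, since a basis made of the $e_i+e_j$ around an odd cycle generates an index-$2$ sublattice. If the general theory gives trouble, I would instead use the generating function. The Laplace transform $F_{g,n}$ has denominators $\prod(1-z_iz_j)^{d_{ij}+1}$ only, so coefficient extraction from products of geometric series in the monomials $z_iz_j$ and $z_i^2$ yields quasipolynomials whose periods divide $2$. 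Shifting by the finitely many $m$ preserves all of this. Rationality of the coefficients comes from the rational coefficients $c_m$ of $P_{g,n}$ and of the binomials.

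\emph{Symmetry.} Symmetry in $L_1,\dots,L_n$ follows either from \hyperref[prop:RTR1]{RTR1} via \cref{prop:RTR}, or directly from the definition: relabelling faces gives a bijection of $\MG_{g,n}$ preserving $|\Aut(G)|$ and the MON, since the MON does not depend on the face labelling.

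\emph{Polynomiality in $b$.} This follows from the definition: $N_{g,n}$ is a finite sum of MONs $\rho_G(\ell;b)$. By \cref{prop:MON}\labelcref{poly:b}, each is a polynomial in $b$ of degree at most $2g$, so the same holds for the sum.

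Finally, I would record that the period-$2$ structure is consistent with the vanishing of $N_{g,n}$ when $\sum_i L_i$ is odd. For this, note that $M\alpha=L-m$ forces parity constraints, and that only even total $\sum_i L_i$ is supported.
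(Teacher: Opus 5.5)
Your proposal is correct and follows essentially the same route as the paper. Both use \cref{lem:poly:weighted:count} together with Ehrhart/vector-partition-function theory for the piecewise quasipolynomial structure, the face-relabelling bijection for symmetry, and \cref{prop:MON}\labelcref{poly:b} for the $b$-dependence.

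The basis-wise claim you flag as needing a check does hold. Regard $2e_i$ as a loop at $i$ and $e_i+e_j$ as an edge. Then each basis of columns of $M$ is a spanning subgraph whose components each contain exactly one cycle, and that cycle is odd. The alternating sum of the $e_i+e_j$ around an odd cycle equals $2e_i$, and the identity $2(e_u+e_v)-2e_u=2e_v$ propagates this along tree edges, so every basis lattice contains $2\Z^n$. This basis-by-basis statement is the precise justification behind the paper's briefer remark that the columns of $M$ generate an index-$2$ sublattice.
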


In other words, once we fix the parities of the $L_1,\ldots, L_n$, the function $N_{g,n}(L)$ agrees with a piecewise polynomial in $(L_1,\ldots,L_n)$ with rational coefficients.

\begin{proof}
	The symmetry and rationality of $N_{g,n}$ are immediate from \cref{def:Ngn}: symmetry follows from summing over labelled M\"obius graphs, while rationality comes from the orbifold weights $1/|\Aut(G)|$.

	To prove piecewise quasipolynomiality, we use \cref{lem:poly:weighted:count}, which expresses $N_{g,n}$ as a finite sum of polynomially weighted counts of integer points in polytopes of the form $Q(L-m)$. Such a polytope is defined by the matrix equation $M\alpha=L-m$, where $M$ is the $n\times n^2$ matrix whose columns are $2e_i$ for $i\in[n]$ and $e_i+e_j$ for $i,j\in[n]$ with $i\neq j$. An extension of Ehrhart theory developed in \cite{BBDKV19} implies that each polynomially weighted count in \cref{eq:poly_weighted} is a piecewise quasipolynomial in $L-m$, with period dividing the least common multiple of the denominators of the vertices of the polytope. Since the columns of $M$ generate an index-$2$ sublattice of $\Z^n$, the vertices of $Q(L-m)$ have coordinates with denominators at worst $2$. It follows that the period of $N_{g,n}$ is $2$.

	Finally, polynomiality in $b$ follows from the fact that the MON is a polynomial in $b$ of degree at most $2g$, see \cref{prop:MON}.
\end{proof}

\subsection{Wall-and-chamber structure}
In the previous section, we proved that the $N_{g,n}$ are piecewise quasipolynomials in the boundary lengths. The following result gives a precise description of the wall-and-chamber structure.

\begin{proposition}\label{lem:walls}
	The refined lattice point count $N_{g,n}$ is a piecewise quasipolynomial with walls in $\R_{\ge 0}^n$ given by the equations
	\begin{equation}
		\sum_{i=1}^n \epsilon_i L_i = 0,
		\qquad
		\epsilon_i \in \set{+1,-1,0}.
	\end{equation}
	Moreover, $N_{g,n}(L)$ is continuous across these walls. In particular, after fixing the parity class of $(L_1,\ldots,L_n)$, it extends to a continuous function on all of $\R_{\ge 0}^n$.
\end{proposition}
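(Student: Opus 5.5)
I will prove both claims, the location of the walls and continuity across them, by induction on $2g-2+n$ using the asymmetric recursion of \cref{thm:lattice:rec}, after fixing a parity class of $L$. The base cases $N_{0,3}$, $N_{\frac12,2}$, $N_{1,1}$ from \cref{eq:base:top:lattice} are explicit. $N_{0,3}$ and $N_{1,1}$ are polynomial on each parity class. $N_{\frac12,2}$ involves $\max(L_1,L_2)$, which is continuous and has its only wall at $L_1-L_2=0$, of the required form.

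For the inductive step, fix parities and assume each lower $N_{g',n'}$ is, on each parity class, a continuous function given by polynomials on the chambers cut out by the hyperplanes $\sum\epsilon_iL_i=0$. I treat each term of \cref{eq:lattice:rec} separately. Consider the $\mc{E}$-term
\[
\sum_{p} p(L_1-1)[L_1-p]_+ N_{g-\frac12,n}(p,L_2,\dots,L_n)/(2L_1).
\]
Here the sum runs over $p\equiv L_1+\dots$ in a fixed parity class with $0<p<L_1$. On each chamber for $N_{g-\frac12,n}$, the region in $p$ is an interval whose endpoints are linear forms $\pm\sum_{i\ge2}\epsilon_iL_i$ or $L_1$.

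Summing a polynomial in $p$ over an arithmetic progression of step $2$ between such endpoints gives, by Faulhaber, a polynomial in the endpoints once parities are fixed. The breakpoints appear exactly where the ordering of the endpoints changes, namely on hyperplanes $L_1=\sum_{i\ge2}\epsilon_iL_i$ or $\sum_{i\ge2}\epsilon_iL_i=\sum_{i\ge2}\epsilon'_iL_i$. Both are of the form $\sum\epsilon_iL_i=0$ with $\epsilon_i\in\{0,\pm1\}$ after cancellation; when coefficients $\pm2$ appear I will reduce them using symmetry, as explained below. The factor $1/L_1$ is cancelled because the summand vanishes to first order appropriately: the summed polynomial vanishes at $L_1=0$. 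This I will check via the antisymmetry $p\mapsto -p$ implicit in the $\mc{R},\mc{E},\mc{D}$ kernels, as in Norbury's argument.

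The $\mc{R}$-term is treated the same way, with kernel breakpoints at $p=L_1\pm L_m$ and $p=L_m-L_1$. The $\mc{D}$-terms produce double sums over the triangle $p+q<L_1$. For these I iterate: sum over $q$ first, then over $p$, each time producing polynomials in linear-form endpoints.

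Continuity follows because each summand is a continuous function of $(p,L)$, being a product of ramp functions and continuous $N$'s by induction. A sum over lattice points of such a function, whose support endpoints vary continuously, changes continuously when endpoints cross. More precisely, on a wall two chamber polynomials differ by a sum over an interval of length zero together with boundary terms that vanish because the ramp kernels vanish at their breakpoints. I would make this rigorous through the generating-function viewpoint instead: by \cref{lem:poly:weighted:count} and vector-partition-function theory (\cite{BBDKV19}), the walls of $\sum_{\alpha\in Q^\Z(L-m)}p_m(\alpha)$ lie in the chamber complex of $M$. The cones spanned by $n-1$ columns $2e_i$, $e_i+e_j$ give exactly hyperplanes $\sum\epsilon_iL_i=0$ with $\epsilon_i\in\{0,\pm1\}$. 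This route disposes of the $\pm2$-coefficient worry and settles the wall location directly.

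\textbf{Main obstacle.} The hardest step is continuity, since vector partition functions are generally only quasipolynomial and need not be continuous across walls. Here the key input I would try first is that the chamber polynomials agree on walls because the poles of $F_{g,n}$ are only at $z_iz_j=1$, with no pole at $z_i=0$. By the residue formula for Ehrhart-type sums, the difference of polynomials across a wall is a residue at a pole of order bounded by $d_{i,j}+1$. Its vanishing on the wall itself should follow from degree counting, since the jump is divisible by the defining linear form. If that fails, I fall back to the inductive Faulhaber argument with ramp-function kernels, where continuity is manifest.
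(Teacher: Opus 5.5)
You have the same two ingredients as the paper: an inductive Faulhaber analysis of the recursion, and the Ehrhart-type theory of \cite{BBDKV19} applied to \cref{lem:poly:weighted:count}. But you present the second as a \emph{replacement} for the first, and that does not work. \Cref{lem:poly:weighted:count} writes $N_{g,n}$ as a finite sum over shifts $m$ of weighted counts in $Q(L-m)$. The chamber complex of $M$ therefore only places the walls on affine hyperplanes $\sum_i\epsilon_iL_i=\sum_i\epsilon_im_i$ with $\epsilon_i\in\{0,\pm1\}$, and nothing forces these through the origin. This is exactly what the recursion is for in the paper. Your inductive argument shows every wall is a homogeneous hyperplane with integer coefficients (possibly $\pm2$). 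The $M$-argument shows it is one of the affine hyperplanes above. Only together do they give $\sum_i\epsilon_iL_i=0$ with $\epsilon_i\in\{0,\pm1\}$, and the combination also disposes of your $\pm2$ worry. You do not need the antisymmetry argument for the factor $1/(2L_1)$ either. By \cref{prop:poly_props}, $N_{g,n}$ is already piecewise quasipolynomial, so on each region where both sides are polynomial its chamber polynomial is the right-hand side's divided by $2L_1$. Hence the walls of $N_{g,n}$ are among the right-hand side's breakpoints.

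The continuity step is the real gap. It is not ``manifest'' from continuity of the summand and vanishing of the ramp kernels, because the sums run over $p$ in a fixed residue class mod $2$. If a breakpoint has the opposite parity to $p$, the Faulhaber polynomial of a degenerate interval is a half-step difference, which need not vanish. For example, take $L_1$ even, $L_2$ odd, and sum over even $p$. The summand $[p-L_2]_+^2[L_1-p]_+$ is jointly continuous and vanishes at both breakpoints. Yet $\sum_p[p-L_2]_+^2[L_1-p]_+$ equals $S(L_1-L_2)$ on $L_1>L_2$, where $S(d)=\frac{d^2(d-1)(d-2)}{6}-\frac{(d-1)^2((d-1)^2-2)}{8}$ and $S(0)=\tfrac18$, while it vanishes identically on $L_1<L_2$. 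What an inductive proof must actually track is parity alignment of the breakpoints. For the kernels this holds: in the asymmetric recursion $p\equiv L_1+L_m$ in the $\mc{R}$-term, $p\equiv L_1$ in the $\mc{E}$-term and $p+q\equiv L_1$ in the $\mc{D}$-terms, matching the breakpoints $L_1+L_m$, $\pm(L_1-L_m)$, $L_1$. For breakpoints coming from walls of the lower $N$'s, including where they cross $p=0$, alignment has to be proved and carried along as part of the induction hypothesis. The paper itself only asserts that continuity follows from the recursion, so this is where genuine work is needed. Your residue route does not supply it, since saying the jump is divisible by the defining linear form just restates continuity.
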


\begin{proof}
	Using the structural expression of $N_{g,n}$ as a polynomially weighted count of lattice points in the polytopes $Q(L-m)$ from \cref{lem:poly:weighted:count}, we first describe the possible walls. By \cite{BBDKV19}, the walls arise from hyperplanes spanned by $n-1$ linearly independent columns of the matrix $M$. Let $H_I\subset \R^n$ be such a wall, spanned by columns $\{M_i\}_{i\in I}$ with $I\subset[n^2]$ and $|I|=n-1$. A normal vector $u_I\in\R^n$ to $H_I$ satisfies
	\begin{equation}
		u_I\cdot M_i = 0 \qquad \text{for all } i\in I.
	\end{equation}
	Since each column of $M$ is either of the form $2e_r$ or $e_r+e_s$, these orthogonality can be used to force $u_I$ to have entries in $\{+1,-1,0\}$. Taking into account the shifts $L\mapsto L-m$ in \cref{lem:poly:weighted:count}, this shows that the walls have equations of the form $\sum_{i=1}^n \epsilon_i L_i = s_m$ with $\epsilon_i\in\{+1,-1,0\}$.

	It remains to show that in fact $s_m=0$, i.e.\ that the walls pass through the origin. This is proved by induction on $2g-2+n$, using the symmetric recursion \labelcref{eq:lattice:rec:sym}. The base cases follow from the explicit formulas in \cref{app:base:top}. Fix a parity class by imposing $L_i\equiv \delta_i \pmod 2$ with $\delta_i\in\{0,1\}$. Consider the $\mc{R}$-term in \cref{eq:lattice:rec:sym}:
	\begin{equation}\label{eq:R-term:chambers}
		\sum_{p>0}
			p\,[L_i+L_j-p]_+\,
			N_{g,n-1}\bigl(p,L_{[n]\setminus\{i,j\}}\bigr).
	\end{equation}
	Fix a chamber $\mf{c}$ in the $(n-1)$ variables $(p,L_{[n]\setminus\{i,j\}})$. On $\mf{c}$ the function $N_{g,n-1}$ is represented by a polynomial, and it vanishes unless $p\equiv \delta_i+\delta_j \pmod 2$; denote this polynomial by $N^{\mf{c}}_{g,n-1}(p,L_{[n]\setminus\{i,j\}})$. As $p$ ranges between $0$ and $L_i+L_j$, passing between chambers only changes the summation bounds, which by the induction hypothesis are given by linear equations of the form $\epsilon p+\sum_{k\neq i,j}\epsilon_k L_k=0$. Consequently, \cref{eq:R-term:chambers} can be written as a finite sum of expressions of the form
	\begin{equation}\label{eq:R-term:sums}
		\sum_{\substack{b^{\mf{c}} \le p \le B^{\mf{c}} \\ p \equiv \delta_i+\delta_j \;(\mathrm{mod}\,2)}}
			p(L_i+L_j-p)\,
			N^{\mf{c}}_{g,n-1}\bigl(p,L_{[n]\setminus\{i,j\}}\bigr),
	\end{equation}
	where $b^{\mf{c}}$ and $B^{\mf{c}}$ are linear functions of $L_{[n]\setminus\{i,j\}}$. By Faulhaber's formula, each such sum is a polynomial in $(L_1,\ldots,L_n)$. When any two of the linear bounds $b^{\mf{c}}$ and $B^{\mf{c}}$ (across all the chambers) coincide, the splitting of \labelcref{eq:R-term:chambers} into sums of the form \labelcref{eq:R-term:sums} changes. Hence, \cref{eq:R-term:chambers} defines a piecewise polynomial, with walls given by linear equations in $(L_1,\ldots,L_n)$ with integer coefficients. In particular, the walls pass through the origin. The $\mc{E}$- and $\mc{D}$-terms are treated in the same way and are omitted.

	Finally, continuity across the walls is not automatic for general weighted lattice point counts, but follows in our case directly from the recursion \labelcref{eq:lattice:rec:sym}.
\end{proof}

\subsection{Degree}
We prove that $N_{g,n}(L)$ has degree $6g-6+2n$ in the boundary lengths.

\begin{proposition}
	The piecewise quasipolynomials $N_{g,n}(L)$ have degree $6g-6+2n$ in $(L_1,\ldots,L_n)$.
\end{proposition}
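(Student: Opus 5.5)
We argue by induction on $2g-2+n$, using the symmetric recursion of \cref{thm:lattice:rec:sym} and comparing with the volume asymptotics. The statement has two halves: the upper bound (each chamber polynomial of $N_{g,n}$, in a fixed parity class, has total degree at most $6g-6+2n$) and exactness (the degree-$(6g-6+2n)$ part does not vanish).

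For the base cases $2g-2+n=1$, both halves are read off from the explicit formulas in \cref{eq:base:top:lattice} and \cref{tab:Ngnk}. These have degrees $0$, $1$ and $2$ for $(0,3)$, $(\tfrac12,2)$ and $(1,1)$. In the $(\tfrac12,2)$ case the leading coefficient carries a factor $b$, so exactness is understood as a polynomial identity in $b$, or equivalently for generic $b$.

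For the upper bound in the inductive step, we fix a parity class and a chamber as in the proof of \cref{lem:walls}, so that each term of \cref{eq:lattice:rec:sym} becomes a finite sum of expressions of the shape \cref{eq:R-term:sums}.
\begin{itemize}
\item In the $\mc{R}$-term, the summand $p(L_i+L_j-p)N^{\mf{c}}_{g,n-1}$ has degree at most $2+(6g-6+2n-2)$ in $(p,L)$. By Faulhaber's formula, summing over $p$ between bounds linear in $L$ raises the degree by one, giving $6g-6+2n+1$.
\item In the $\mc{E}$-term, the summand $p(L_i-1)(L_i-p)N_{g-\frac12,n}$ has degree $3+(6g-9+2n)$. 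Summing raises this by one, giving again $6g-6+2n+1$.
\item In the $\mc{D}$-terms there is a double sum over $p,q$, and the summand has degree $3+(6g-12+2n+2)$, or $3$ plus the sum of the two degrees in the disconnected case. Summing twice gives $6g-6+2n+1$ in both subcases.
\end{itemize}
Dividing by $2\sum_i L_i$ on the left-hand side of \cref{eq:lattice:rec:sym} then gives degree at most $6g-6+2n$. We must check that this quotient is still piecewise polynomial rather than merely rational. This follows from \cref{prop:poly_props}, since $N_{g,n}$ is already known to be a piecewise quasipolynomial, so only the degree comparison is needed.

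For exactness we use the scaling limit \cref{eq:NV}: when $\sum L_i$ is even, $N_{g,n}(\lambda^{-1}L)\lambda^{6g-6+2n}\to \frac{2}{2^{2g-2+n}}V_{g,n}(L)$. Given the upper bound, this says that the top homogeneous part of each chamber polynomial, in the even parity class, equals $\frac{2}{2^{2g-2+n}}V_{g,n}$. For the odd parity classes we use the same limit with $L$ approximated by lattice vectors of the prescribed parities; the limit is independent of the parity class because the Riemann-sum argument of \cref{sec:volume} only uses the total parity. It therefore remains to show $V_{g,n}\not\equiv 0$. This is the step we expect to be the main obstacle, since for general $b$ the MON could in principle cause cancellation.

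We plan to handle it by specialising $b=1$. There \cref{prop:MON} gives $\rho\equiv 1$, so $V_{g,n}(L;1)$ is a positive multiple of the total Euclidean volume of the top-dimensional cells of $\Mod_{g,n}(L)$. This is strictly positive because trivalent M\"obius graphs of type $(g,n)$ with $P_G(L)$ nonempty of full dimension $6g-6+2n$ exist. For example, one can take a generic point of the non-orientable Klein-surface component $\Mod^-_{g,n}(L)$, which has this dimension by \cref{eq:iso:nor}. Alternatively, for integer $g$ one can take $b=0$ and use the known positivity of Kontsevich's volumes. Hence $V_{g,n}\neq 0$ as a polynomial in $b$, the top-degree part of $N_{g,n}$ is nonzero, and the degree is exactly $6g-6+2n$.
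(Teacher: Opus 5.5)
Your upper bound is the same as the paper's proof: induction on $2g-2+n$ through the symmetric recursion \eqref{eq:lattice:rec:sym}, chamberwise Faulhaber sums of degree at most $6g-5+2n$, then division by $2\sum_i L_i$. You are slightly more careful than the paper here, since you note that \cref{prop:poly_props} already makes the quotient piecewise polynomial, so only degrees need comparing.

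The genuine difference is the lower bound. The paper asserts that the Faulhaber sums have degree exactly $6g-6+2n+1$ but never rules out cancellation among the $\mc{R}$-, $\mc{E}$- and $\mc{D}$-contributions. Its alternative via pole orders of $\omega^{\textup{Web}}_{g,n}$ likewise gives only an upper bound. You instead identify the top homogeneous part of each chamber polynomial with $\frac{2}{2^{2g-2+n}}V_{g,n}$ through the scaling limit \eqref{eq:NV}, and then prove $V_{g,n}(L;1)>0$ using $\rho\equiv 1$ at $b=1$. This is an actual proof that the degree is attained on every chamber of every parity class with even total. It also correctly brings out that ``exactly $6g-6+2n$'' must be read for generic $b$, since for half-integer $g$ the count vanishes identically at $b=0$.

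Note that once piecewise quasipolynomiality is known, the scaling limit alone already forces each chamber degree to be at most $6g-6+2n$. Otherwise $\lambda^{6g-6+2n}N_{g,n}(\lambda^{-1}L)$ would diverge on a dense set of directions. So your induction is only needed if you want to avoid relying on the Riemann-sum asymptotics.

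Two minor fixes:
\begin{itemize}
\item The independence of the limit from the parity class is a per-cell fact, not a consequence of the recursion argument of \cref{sec:volume}. For trivalent $G$, the image $A_G(\Z^{E(G)})$ is the whole even-sum lattice, so every integral fibre is a translate of $\Lambda_G$.
\item For $g=0$ the non-orientable component is empty, so positivity there must come from the orientable component, or from your $b=0$ alternative.
\end{itemize}
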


\begin{proof}
	The proof is by induction on $2g-2+n$, using the symmetric recursion \labelcref{eq:lattice:rec:sym}. The base cases follow from the explicit formulas in \cref{app:base:top}. Fix a parity class by choosing $\delta_i\in\{0,1\}$ such that $L_i\equiv \delta_i \pmod 2$ throughout.

	As in the proof of \cref{lem:walls}, each summation in the recursion can be decomposed chamberwise: after fixing a chamber $\mf{c}$, the relevant $N$ is represented by a polynomial (with the appropriate parity constraint), and the summation bounds become linear functions of the remaining boundary lengths. We use this repeatedly below.

	Consider first the $\mc{R}$-term. In each chamber $\mf{c}$ it is a finite sum of expressions of the form
	\begin{equation}\label{eq:R-term:chambers:encore}
		\sum_{\substack{b^{\mf{c}} \le p \le B^{\mf{c}} \\ p \equiv \delta_i+\delta_j \;(\mathrm{mod}\,2)}}
			p(L_i+L_j-p)\,
			N^{\mf{c}}_{g,n-1}\bigl(p,L_{[n]\setminus\{i,j\}}\bigr),
	\end{equation}
	where $b^{\mf{c}}$ and $B^{\mf{c}}$ are linear in $L_{[n]\setminus\{i,j\}}$. By the induction hypothesis, $N^{\mf{c}}_{g,n-1}$ has degree $6g-6+2n-2$. Faulhaber's formula then shows that \cref{eq:R-term:chambers:encore} has degree $6g-6+2n+1$ in $(L_1,\ldots,L_n)$. After the overall division by $L_1+\cdots+L_n$ in \cref{eq:lattice:rec:sym}, this contributes degree $6g-6+2n$ to $N_{g,n}$. The $\mc{E}$-term is treated in the same way and is omitted.

	A similar argument applies to the $\mc{D}$-terms. We treat only the connected contribution; the disconnected case is identical. Chamberwise, the relevant double sums are of the form
	\begin{equation}
		\sum_{\substack{b_1^{\mf{c}}\le p\le B_1^{\mf{c}} \\ b_2^{\mf{c}}\le q\le B_2^{\mf{c}} \\ p+q \equiv \delta_i \;(\mathrm{mod}\,2)}}
			pq(L_i-p-q)\,
			N^{\mf{c}}_{g-1,n+1}\bigl(p,q,L_{[n]\setminus\{i\}}\bigr),
	\end{equation}
	where $b_1^{\mf{c}},B_1^{\mf{c}}$ are linear in $(L_1,\ldots,L_n)$ and $b_2^{\mf{c}},B_2^{\mf{c}}$ are linear in $(p,L_1,\ldots,L_n)$. By the induction hypothesis, $N^{\mf{c}}_{g-1,n+1}$ has degree $6g-6+2n-4$. Applying Faulhaber's formula twice shows that such a double sum has degree $6g-6+2n+1$ before the division by $L_1+\cdots+L_n$, hence contributes degree $6g-6+2n$ to $N_{g,n}$.

	Alternatively, the degree can be read off directly from \cref{prop:RTR}: one can prove that the refined topological recursion differential $\omega^{\textup{Web}}_{g,n}$ has poles of order at most $6g-4+2n$, which implies that the coefficients $N_{g,n}(L)$ have degree $6g-6+2n$.
\end{proof}

Putting together the results of this section completes the proof of \cref{thm:polynomiality}.

\section{The refined Euler characteristic}
\label{sec:Euler}
The goal of this section is twofold: to show that the refined lattice point count computes a refined orbifold Euler characteristic of the moduli space of metric M\"obius graphs, and to evaluate this quantity explicitly using refined topological recursion. The refined Euler characteristic is defined as the usual signed orbifold count, weighted by the average MON of each cell. As a by-product of this evaluation, we recover the orbifold Euler characteristics of the moduli spaces of Riemann and Klein surfaces, thereby providing a new proof of the Harer--Zagier \cite{HZ86} and Goulden--Harer--Jackson \cite{GHJ01} formulas, and exhibiting a one-parameter refinement that interpolates between them.

\begin{definition}
	Define the \emph{refined Euler characteristic} of the moduli space of metric M\"obius graphs $\mathcal N_{g,n}(L)$ by
	\begin{equation}
		\chi_{g,n}(b)
		\coloneqq
		\sum_{G \in \MG_{g,n}}
			(-1)^{\dim P_{G}(L)}
			\frac{\braket{\rho_G(b)}}{|\Aut(G)|},
		\qquad
		\braket{\rho_G(b)} \coloneqq \rho_G(1,\ldots,1;b).
	\end{equation}
\end{definition}

The quantity $\braket{\rho_G(b)}$ will be referred to as the \emph{average MON} of the M\"obius graph $G$. It is the value of $\rho_G$ at the uniform metric $(1,\ldots,1)$ on $G$. Since $\rho_G$ is a homogeneous rational function of degree zero in the edge lengths, the same value is obtained at any uniform metric $(\ell,\ldots,\ell)$ with $\ell>0$. In this sense, $\braket{\rho_G(b)}$ captures the average non-orientability of the cell associated with~$G$.

To relate $\chi_{g,n}(b)$ to the lattice point count, introduce the formal power series
\begin{equation}
	S_{g,n}(z;b)
	\coloneqq
	\sum_{L_1,\ldots,L_n > 0}
		N_{g,n}(L_1,\ldots,L_n;b)\, z^{L_1+\cdots+L_n}.
\end{equation}
As before, we suppress the dependence on $b$ when it is clear from the context.

The key point is to evaluate $S_{g,n}$ at $z=\infty$ (after analytic continuation) in three different ways. First, using the polytopal cell decomposition of the moduli space, we relate $S_{g,n}(\infty)$ to the refined Euler characteristic. Second, using the piecewise quasipolynomiality of the lattice point count, we identify $S_{g,n}(\infty)$ with the constant term of the lattice point polynomial. Third, we express the same quantity in terms of the iterated integral of refined topological recursion correlators, obtaining an explicit closed formula in terms of double Bernoulli polynomials.

\subsection{Via the polytopal structure}
The series $S_{g,n}$ can be resummed into a meromorphic function of $z$.

\begin{lemma}
	The series $S_{g,n}$ equals the following meromorphic function of $z$ with coefficients in $\Q[b]$:
	\begin{equation}\label{eq:Sgn}
		S_{g,n}(z)
		=
		\sum_{G \in \MG_{g,n}} \frac{\braket{\rho_G}}{|\Aut(G)|}
		\left( \frac{z^2}{1-z^2} \right)^{|E(G)|}.
	\end{equation}
	In particular, $S_{g,n}(\infty) = (-1)^n \chi_{g,n}(b)$.
\end{lemma}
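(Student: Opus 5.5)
The plan is to prove \cref{eq:Sgn} graph by graph, by an induction on the number of edges that mirrors the recursive definition of the MON (\cref{def:MON}). First I rewrite $S_{g,n}$ via the cell decomposition. By the orbit–stabiliser form of \cref{def:Ngn} and the identity $\sum_i L_i = 2\sum_{e} \ell_e$, summing over all $L \in \Z_{>0}^n$ is the same as summing over all integral metrics $\ell \in \Z_{>0}^{E(G)}$ on each $G \in \MG_{g,n}$. This gives
\begin{equation}
	S_{g,n}(z) = \sum_{G \in \MG_{g,n}} \frac{1}{|\Aut(G)|}\, F_G(z),
	\qquad
	F_G(z) \coloneqq \sum_{\ell \in \Z_{>0}^{E(G)}} \rho_G(\ell)\, z^{2|\ell|},
	\qquad
	|\ell| \coloneqq \sum_{e} \ell_e .
\end{equation}
It therefore suffices to prove that $F_G(z) = \braket{\rho_G}\, u^{|E(G)|}$ with $u \coloneqq z^2/(1-z^2)$. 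I will prove this for \emph{every} M\"obius graph, allowing arbitrary valency and disconnected graphs, because edge removal produces such graphs.

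The induction runs on $|E(G)|$. For disconnected graphs the claim follows from the connected case: $\rho$ is multiplicative, the $\ell$-sum factorises over components, and the average MON is multiplicative as well. For the base case $G = \bullet$ we have $F_\bullet = 1 = \braket{\rho_\bullet}\,u^0$. For the inductive step, clear denominators in \cref{eq:def:rho}. Since $\sum_{r \in R(G)} \ell_{e_r} = 4|\ell|$, this gives
\begin{equation}
	4|\ell|\,\rho_G(\ell) = \sum_{r \in R(G)} \ell_{e_r}\, w_r\, \rho_{G-e_r}(\ell - \ell_{e_r}).
\end{equation}
Multiply by $z^{2|\ell|}$ and sum over $\ell \in \Z_{>0}^{E(G)}$, writing $D \coloneqq \tfrac{z}{2}\tfrac{d}{dz}$. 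The left-hand side becomes $4D F_G$. On the right-hand side the sum over $\ell_{e_r}$ decouples and produces $D u$, while the remaining lengths produce $F_{G-e_r}$. Using the induction hypothesis $F_{G-e_r} = \braket{\rho_{G-e_r}}\, u^{|E(G)|-1}$, I get the ODE
\begin{equation}
	4 D F_G = \Bigl( \sum_{r \in R(G)} w_r \braket{\rho_{G-e_r}} \Bigr) u^{|E(G)|-1} D u .
\end{equation}
On the other hand, evaluating \cref{eq:def:rho} at the uniform metric gives $\sum_r w_r \braket{\rho_{G-e_r}} = 4|E(G)|\,\braket{\rho_G}$, because $|R(G)| = 4|E(G)|$. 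Hence the ansatz $\braket{\rho_G}\, u^{|E(G)|}$ solves the ODE. Both this ansatz and $F_G$ are power series in $z$ without constant term, since $|E(G)| \ge 1$. The operator $D$ is injective on such series, so the two must coincide.

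Finally I evaluate at infinity. The function $u = z^2/(1-z^2)$ tends to $-1$ as $z \to \infty$, so $S_{g,n}(\infty) = \sum_G (-1)^{|E(G)|} \braket{\rho_G}/|\Aut(G)|$. Since $\dim P_G(L) = |E(G)| - n$, this equals $(-1)^n \chi_{g,n}(b)$. The step that needs the most care is the inductive one. A single $F_G$ is not obviously a product, because $\rho_G(\ell)$ is a genuinely non-constant rational function of $\ell$. The resolution is that the Tutte-type recursion for $\rho$ is linear after multiplying by the total length, and multiplication by $|\ell|$ becomes the Euler operator $D$ on generating functions. The bookkeeping points to check are that low-valency and disconnected $G-e_r$ are covered by the induction, and that the weights $w_r$ do not depend on $\ell$, which is immediate from \cref{def:MON}.
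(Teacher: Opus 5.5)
Your proof is correct and follows essentially the paper's argument. Both rewrite $S_{g,n}$ as a sum over graphs and integral metrics, and both run the same induction on the number of edges, using the MON recursion together with its value at the uniform metric, $\sum_{r} w_r\braket{\rho_{G-e_r}} = 4|E(G)|\braket{\rho_G}$. The difference is only in packaging: your identity $F_G=\braket{\rho_G}\,u^{|E(G)|}$ is the generating-function form of the paper's slice-wise claim $\sum_{|\ell|=T}\rho_G(\ell)=\braket{\rho_G}\binom{T-1}{|E(G)|-1}$. Likewise, your Leibniz step $D(u^{|E(G)|})=|E(G)|\,u^{|E(G)|-1}Du$ plays the role of the paper's observation that $\sum_{|\ell|=T}\ell_e$ is independent of $e$.
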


\begin{proof}
	Rewrite the definition of $S_{g,n}(z)$ as
	\begin{equation}
	\begin{split}
		S_{g,n}(z)
		&=
		\sum_{G \in \MG_{g,n}}
			\frac{1}{|\Aut(G)|}
				\sum_{\ell \in \Z_{>0}^{E(G)}}
					\rho_G(\ell)\,
					z^{\sum_{i=1}^n\sum_{e \in E(G)} a_{i,e} \ell_e} \\
		&=
		\sum_{G \in \MG_{g,n}}
			\frac{1}{|\Aut(G)|}
			\sum_{\ell \in \Z_{>0}^{E(G)}}
				\rho_G(\ell)\,
				z^{2 \sum_{e \in E(G)}\ell_e} \\
		&=
		\sum_{G \in \MG_{g,n}}
			\frac{1}{|\Aut(G)|}
			\sum_{T \ge |E(G)|} z^{2T}
				\sum_{\substack{\ell_1,\ldots,\ell_{|E(G)|} > 0 \\ \ell_1 + \cdots +\ell_{|E(G)|} = T}}
					\rho_G(\ell),
	\end{split}
	\end{equation}
	where the first equality is \cref{def:Ngn}, the second uses $\sum_{i=1}^n a_{i,e}=2$, and the third groups terms by the total edge length. For notational convenience, set $E=|E(G)|$ and label the edges by $1,\ldots,E$. We claim that the innermost sum satisfies
	\begin{equation}\label{eq:claim:averageMON}
		\sum_{\substack{\ell_1,\ldots,\ell_{E} > 0 \\ \ell_1 + \cdots +\ell_{E} = T}} \rho_G(\ell)
		=
		\braket{\rho_G}\,
		\sum_{\substack{\ell_1,\ldots,\ell_{E} > 0 \\ \ell_1 + \cdots +\ell_{E} = T}} 1 .
	\end{equation}
	Assuming this claim, \cref{eq:Sgn} follows since
	\begin{equation}
		\sum_{T \ge E} z^{2T}
		\sum_{\substack{\ell_1,\ldots,\ell_{E} > 0 \\ \ell_1 + \cdots +\ell_{E} = T}} 1
		=
		\sum_{T \ge E} \binom{T-1}{E-1} z^{2T}
		=
		\biggl(\frac{z^2}{1-z^2}\biggr)^{E}.
	\end{equation}
	It remains to prove \cref{eq:claim:averageMON}. We argue by induction on $E$. For this part, we drop the face-labelling and valency restrictions, since the claim is meaningful for arbitrary M\"obius graphs as well. The base cases are immediate. Assume the claim holds for $E-1$. Using the definition of $\rho_G$ in \labelcref{eq:def:rho}, we obtain
	\begin{equation}
	\begin{split}
		\sum_{\substack{\ell_1,\ldots,\ell_{E} > 0 \\ \ell_1 + \cdots +\ell_{E} = T}}
			\rho_G(\ell)
		&=
		\sum_{e=1}^{E} \frac{w_e}{4T}
			\sum_{\substack{\ell_1,\ldots,\ell_{E} > 0 \\ \ell_1 + \cdots +\ell_{E} = T}}
				\ell_e \, \rho_{G-e}(\ell-\ell_e) \\
		&=
		\sum_{e=1}^{E} \frac{w_e}{4T}
			\sum_{\ell_e=1}^{T-E+1}
			\ell_e
				\sum_{\substack{\ell_1,\ldots,\widehat{\ell_e},\ldots,\ell_E > 0 \\
				\ell_1 + \cdots +\widehat{\ell_e}+\cdots+\ell_E = T - \ell_e}}
					\rho_{G-e}(\ell-\ell_e) \\
		&=
		\sum_{e=1}^{E} \frac{w_e}{4T}
			\sum_{\ell_e=1}^{T-E+1}
			\ell_e \braket{\rho_{G-e}}
				\sum_{\substack{\ell_1,\ldots,\widehat{\ell_e},\ldots,\ell_E > 0 \\
				\ell_1 + \cdots +\widehat{\ell_e}+\cdots+\ell_E = T - \ell_e}}
					1,
	\end{split}
	\end{equation}
	where the last equality uses the induction hypothesis. Reintroducing $\ell_e$ into the sum yields
	\begin{equation}
		\sum_{\substack{\ell_1,\ldots,\ell_{E} > 0 \\ \ell_1 + \cdots +\ell_{E} = T}}
			\rho_G(\ell)
		=
		\sum_{e=1}^{E} \frac{w_e}{4T}\,\braket{\rho_{G-e}}
		\sum_{\substack{\ell_1,\ldots,\ell_{E} > 0 \\ \ell_1 + \cdots +\ell_{E} = T}}
			\ell_e
		=
		\sum_{e=1}^{E} \frac{w_e}{4E}\,\braket{\rho_{G-e}}
			\sum_{\substack{\ell_1,\ldots,\ell_{E} > 0 \\ \ell_1 + \cdots +\ell_{E} = T}} 1.
	\end{equation}
	To justify the last step, note that the sum $\sum_{\ell_1+\cdots+\ell_E=T} \ell_e$ is independent of $e$ by symmetry. Averaging over $e$ therefore gives\footnote{
		Geometrically, this says that the barycentre of the lattice points in the simplex $\set{ \ell_i>0 | \sum_i \ell_i=T }$ lies on the diagonal, so each coordinate has average $T/E$.
	}
	\begin{equation}
		\frac{1}{T} \sum_{\substack{\ell_1,\ldots,\ell_{E} > 0 \\ \ell_1 + \cdots +\ell_{E} = T}} \ell_e
		=
		\frac{1}{T \, E}\sum_{e=1}^E
		\sum_{\substack{\ell_1,\ldots,\ell_{E} > 0 \\ \ell_1 + \cdots +\ell_{E} = T}} \ell_e
		=
		\frac{1}{E}
		\sum_{\substack{\ell_1,\ldots,\ell_{E} > 0 \\ \ell_1 + \cdots +\ell_{E} = T}} 1.
	\end{equation}
	The defining recursion \labelcref{eq:def:rho}, evaluated at the uniform metric $\ell=(1,\ldots,1)$, gives $\braket{\rho_G}=\sum_{e=1}^E \frac{w_e}{4E}\,\braket{\rho_{G-e}}$, which proves the claimed \cref{eq:claim:averageMON}.

	Finally, \cref{eq:Sgn} implies $S_{g,n}(\infty)=(-1)^n\chi_{g,n}(b)$, since $\dim P_G(L)=|E(G)|-n$.
\end{proof}

\subsection{Via the piecewise quasipolynomial structure}
On the other hand, $S_{g,n}(\infty)$ can be computed using the piecewise quasipolynomiality of $N_{g,n}$. 


\begin{lemma}
	The value of $S_{g,n}$ at infinity is given by $S_{g,n}(\infty)=(-1)^n N^{[0]}_{g,n}(0,\ldots,0)$, where $N^{[0]}_{g,n}$ denotes the piecewise polynomial governing $N_{g,n}$ on inputs where all $L_i$ are even.
\end{lemma}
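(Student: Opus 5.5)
The strategy is to compute $S_{g,n}(\infty)$ chamber by chamber, using the piecewise quasipolynomial structure from \cref{thm:polynomiality} together with a Stanley-type reciprocity for rational generating functions of polynomials over lattice cones. The one-variable model already shows the mechanism. For a polynomial $f$, the series $\sum_{k\ge 1} f(k)\,w^k$ is a rational function of $w$, and as rational functions it equals $-\sum_{k\le 0} f(k)\,w^k$ (Popoviciu/Stanley reciprocity). Letting $w\to\infty$, every term of the second expression with $k<0$ tends to zero. Hence the value at infinity is $-f(0)$.

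For example, $\sum_{k\ge1} z^{2k} = z^2/(1-z^2)\to -1$, while $\sum_{k\ge1} k\,z^{2k} = z^2/(1-z^2)^2 \to 0$. The same argument applied to an odd shift shows that $\sum_{k\ge0} f(2k+1)\,z^{2k+1}$ tends to $0$ at infinity, because after reciprocity only strictly negative powers of $z$ remain.

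\textbf{Step 1: reduction to cones.} Split the sum defining $S_{g,n}(z)$ according to the parity class of $(L_1,\ldots,L_n)$. Within each parity class, use \cref{lem:walls}: the walls $\sum_i \epsilon_i L_i=0$ all pass through the origin. They therefore cut $\R_{>0}^n$ into finitely many relatively open rational polyhedral cones with apex $0$, namely the open chambers together with the relatively open cones lying in the walls. On each such cone $C$ the function $N_{g,n}$ agrees with a single polynomial $f_C$ on the appropriate parity sublattice, and $S_{g,n}$ becomes a finite sum of series $\sum_{L\in C\cap\Lambda} f_C(L)\,z^{|L|}$. Here $\Lambda$ is a translate of $(2\Z)^n$ and $|L|=L_1+\cdots+L_n$.

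\textbf{Step 2: reciprocity on each cone.} Triangulate each $C$ into simplicial cones and write the lattice points of a half-open simplicial cone as a fundamental parallelepiped plus nonnegative integer combinations of the generators. Applying the one-variable computation along each generator direction gives the value at $z=\infty$:
\begin{itemize}
\item if $\Lambda=(2\Z)^n$, the value is $(-1)^{\dim C}\,f_C(0)$, up to the inclusion–exclusion bookkeeping over faces;
\item if $\Lambda$ is a genuinely shifted coset (some $L_i$ odd), the value is $0$, since after reciprocity only negative powers of $z$ survive.
\end{itemize}
Equivalently, the multivariate version of Stanley reciprocity states that for an open cone the series evaluated at $z_i\to\infty$ equals $(-1)^{\dim C}$ times the value of $f_C$ at the apex, provided the apex is a lattice point of $\Lambda$, and vanishes otherwise.

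\textbf{Step 3: using continuity and summing.} By \cref{lem:walls}, $N_{g,n}$ restricted to the even class extends to a continuous function on $\R_{\ge0}^n$. Hence all the polynomials $f_C$ take the same value at the common apex, namely $N^{[0]}_{g,n}(0,\ldots,0)$. Summing $(-1)^{\dim C}$ over the relatively open cones decomposing the open orthant $\R_{>0}^n$ gives its compactly supported Euler characteristic (for the cross-section simplex), which is $(-1)^n$. Therefore
\begin{equation}
	S_{g,n}(\infty)=(-1)^n\,N^{[0]}_{g,n}(0,\ldots,0).
\end{equation}

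\textbf{Main obstacle.} I expect the main difficulty to be Step 2: making the cone-wise reciprocity precise when the cones are not unimodular with respect to the parity lattice, and checking that the fundamental parallelepiped contributions vanish at infinity. The latter should hold because every nonzero lattice point of the parallelepiped yields only negative powers of $z$ after reciprocity. I would handle this through the explicit half-open simplicial decomposition, and fall back on Brion's theorem for the rational generating function of a cone if the direct argument becomes unwieldy.
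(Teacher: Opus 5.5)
Your proof is correct, and its skeleton matches the paper's. Both decompose into cones cut out by walls through the origin. On each cone, the series has value $(-1)^{\dim C}$ times its constant term at infinity on the even class, and $0$ on the odd classes. Both then use continuity and $\sum_C(-1)^{\dim C}=(-1)^n$; the paper phrases this last step as an inclusion--exclusion over closures of cones, evaluated at the origin.

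The technical core is packaged differently. The paper passes to the multivariable series $\mathsf{F}_{g,n}(z_1,\ldots,z_n)$ and invokes \cite[Lemma~4.1]{KO25} to identify $S_{g,n}(\infty)$ with the iterated limit $z_i\to\infty$. It then subdivides into unimodular cones, so each cone contributes $N^{\mf c}_{g,\delta}(z_1\partial_{z_1},\ldots,z_n\partial_{z_n})\prod_j z^{A^{\mf c}_j}/(1-z^{A^{\mf c}_j})$, whose limit is read off by hand. It kills the odd classes with the sign-averaging projector over $(\pm z_1,\ldots,\pm z_n)$, since that limit does not depend on the signs.

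You stay with the single variable $z$ and use Stanley reciprocity, which is valid for arbitrary rational cones. So for this lemma you need neither a unimodular subdivision nor the external iterated-limit result. The price is a reciprocity statement for a lattice coset not containing the apex, which is less standard.

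The obstacle you flag is not a real one. Write the unweighted reciprocity $\sum_{L\in C\cap\Z^n}z^L=(-1)^{\dim C}\sum_{L\in\overline{C}\cap\Z^n}z^{-L}$ in $n$ variables, with $C$ relatively open, $\overline{C}$ its closure, and lattice points taken in the span of $C$. Apply $f_C(z_1\partial_{z_1},\ldots,z_n\partial_{z_n})$ to both sides, and only then specialise $z_i=z$. This is harmless, because no denominator $1-z^{v}$ with $0\neq v\in\Z_{\ge0}^n$ vanishes identically. The odd cosets reduce to $\Z^n$ via exactly the paper's parity projector.

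Conversely, the paper's multivariable set-up is not wasted effort. The identification of $S_{g,n}(\infty)$ with iterated limits of $\mathsf{F}_{g,n}$ is precisely what is needed right afterwards to compare with $\int_0^\infty\cdots\int_0^\infty\omega^{\textup{Web}}_{g,n}$ in the proof of \cref{thm:EC}.
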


\begin{proof}
	Consider the generating function $\mathsf{F}_{g,n}(z_1,\ldots,z_n)$ defined as 
	\begin{equation}
		\mathsf{F}_{g,n}(z_1,\ldots,z_n)
		\coloneqq
		\sum_{L_1,\ldots, L_n > 0 } N_{g,n}(L_1,\ldots, L_n) \prod_{i=1}^n z_i^{L_i}.
	\end{equation}
	Note that $\mathsf{F}_{g,n}$ is the multivariable integral of the refined topological recursion correlators $\omega_{g,n}^{\textup{Web}}$ up to the normalization factors appearing in \cref{prop:RTR}. Then, \cite[Lemma~4.1]{KO25} shows that
	\begin{equation}\label{eq:StoF}
		S_{g,n}(\infty)
		=
		\lim_{z_1 \to \infty} \cdots \lim_{z_n \to \infty}
			\mathsf{F}_{g,n}(z_1,\ldots,z_n).
	\end{equation}
	The above identity means that taking the successive limits $z_i\to\infty$ in $\mathsf{F}_{g,n}(z_1,\ldots,z_n)$ is equivalent to first specialising $z_1=\cdots=z_n=z$ and then letting $z\to\infty$. For brevity, we denote the right-hand side of \labelcref{eq:StoF} by $\mathsf{F}_{g,n}(\infty,\ldots,\infty)$.

	To handle the quasipolynomiality, fix the parity class of each $L_i$ and write
	\begin{equation}
		\mathsf{F}_{g,n}(z_1,\ldots,z_n)
		=
		\sum_{\delta \in \{0,1\}^n}
		\sum_{\substack{L_1,\ldots,L_n > 0 \\ L_i \equiv \delta_i \; (\mathrm{mod}\, 2)}}
		N_{g,\delta}(L_1,\ldots,L_n)\, \prod_{i=1}^n z_i^{L_i}.
	\end{equation}
	Here $N_{g,\delta}$ denotes the continuous piecewise polynomial describing $N_{g,n}$ on the parity class $L_i \equiv \delta_i \pmod{2}$. Denote the inner sum by $\mathsf{F}_{g,\delta}(z_1,\ldots,z_n)$. We claim that $\mathsf{F}_{g,\delta}$ is rational and that
	\begin{equation}\label{eq:S:delta:claim}
		\mathsf{F}_{g,\delta}(\infty,\ldots,\infty)
		=
		\begin{cases}
			(-1)^n N_{g,\delta}(0,\ldots,0) & \text{if $\delta=(0,\ldots,0)$,} \\
			0 & \text{otherwise.}
		\end{cases}
	\end{equation}
	In view of \labelcref{eq:StoF}, the lemma follows immediately from this claim. The remainder of the proof is devoted to proving it.

	By \cref{thm:polynomiality}, $N_{g,\delta}$ is a continuous piecewise polynomial whose walls are given by equations $\sum_i \epsilon_i L_i=0$, with $\epsilon_i\in\{+1,-1,0\}$. These walls decompose the orthant $\R_{>0}^n$ into finitely many relatively open cones $\mf{c}$ with apex at the origin and, on each $\mf{c}$, the function $N_{g,\delta}$ is represented by a polynomial $N_{g,\delta}^{\mf{c}}$. After subdividing if necessary, assume that the cones are simplicial and unimodular with respect to $\Z^n$. In particular, for each cone $\mf{c}$ there exists a unimodular matrix $A^{\mf{c}}=(A^{\mf{c}}_{i,j})$ such that
	\begin{equation}
		A^{\mf{c}}\bigl(\Z_{>0}^{\dim(\mf{c})} \times \{0\}^{\codim(\mf{c})}\bigr)
		=
		\mf{c}\cap \Z^n.
	\end{equation}
	Equivalently, the columns of $A^{\mf{c}}$ are the primitive generators of $\mf{c}$. Then, the multivariate generating function of lattice points in $\mf{c}$ can be resummed as
	\begin{equation}
		\sum_{L \in \mf{c}\cap\Z^n} \prod_{i=1}^n z_i^{L_i}
		=
		\prod_{j=1}^{\dim(\mf{c})}
			\frac{\prod_{i=1}^n z_i^{A^{\mf{c}}_{i,j}}}{1 - \prod_{i=1}^n z_i^{A^{\mf{c}}_{i,j}}}.
	\end{equation}
	Applying the differential operator $N_{g,\delta}^{\mf{c}}(z_1\partial_{z_1},\ldots,z_n\partial_{z_n})$ and summing over cones $\mf{c}$ give
	\begin{equation} \label{eq:Fasrational}
		\mathsf{F}_{g,\delta}(z_1,\ldots,z_n)=\sum_{\mf{c}}\sum_{L \in \mf{c}\cap\Z^n}
			N_{g,\delta}^{\mf{c}}(L)\prod_{i=1}^n z_i^{L_i}
		=
		\sum_{\mf{c}}N_{g,\delta}^{\mf{c}}(z_1\partial_{z_1},\ldots,z_n\partial_{z_n})
		\prod_{j=1}^{\dim(\mf{c})}
			\frac{\prod_{i=1}^n z_i^{A^{\mf{c}}_{i,j}}}{1 - \prod_{i=1}^n z_i^{A^{\mf{c}}_{i,j}}}.
	\end{equation}
	Moreover, the restriction to the parity class $L_i\equiv\delta_i$ can be enforced by the standard parity projector:
	\begin{equation}
		\mathsf{F}_{g,\delta}(z_1,\ldots,z_n)
		=
		\frac{1}{2^n} \sum_{p \in \{+1,-1\}^n}
			\left(\prod_{i=1}^n p_i^{\delta_i}\right)
			\mathsf{F}_{g,\delta}(p_1z_1,\ldots,p_nz_n).
	\end{equation}
	On the other hand, taking the limit $z_i\to\infty$ in \labelcref{eq:Fasrational}, all terms involving at least one operator $z_i\partial_{z_i}$ vanish, so only the constant term of each $N_{g,\delta}^{\mf{c}}$ contributes. Since this constant term does not depend on $p_i$ as $z_i\to\infty$, we have
	\begin{equation}
		\mathsf{F}_{g,\delta}(\infty,\ldots,\infty)
		=
		\frac{1}{2^n} \sum_{p \in \{+1,-1\}^n}
			\left(\prod_{i=1}^n p_i^{\delta_i}\right)
			\sum_{\mf{c}}
				(-1)^{\dim(\mf{c})}\,N_{g,\delta}^{\mf{c}}(0,\ldots,0).
	\end{equation}
	The remaining sum over $p$ vanishes unless $\delta=(0,\ldots,0)$, in which case it equals $1$. This proves \cref{eq:S:delta:claim} for $\delta\neq 0$, and yields $\mathsf{F}_{g,(0,\ldots,0)}(\infty,\ldots,\infty)=\sum_{\mf{c}} (-1)^{\dim(\mf{c})} N_{g,(0,\ldots,0)}^{\mf{c}}(0,\ldots,0)$ for $\delta=0$.

	To conclude for $\delta=0$, apply inclusion-exclusion to the fan of cones. One has $\mb{1}_{\R_{\ge 0}^n}=\sum_{\mf{c}} (-1)^{\codim(\mf{c})}\mb{1}_{\overline{\mf{c}}}$. Since $N_{g,(0,\ldots,0)} = N_{g,n}^{[0]}$ is continuous, it follows that for all $L\in\R_{\ge 0}^n$,
	\begin{equation}
		N_{g,n}^{[0]}(L_1,\ldots,L_n)
		=
		\sum_{\mf{c}}
			(-1)^{\codim(\mf{c})} \,N_{g,(0,\ldots,0)}^{\mf{c}}(L)\,\mb{1}_{\overline{\mf{c}}}(L_1,\ldots,L_n).
	\end{equation}
	Evaluating at $L=0$, and noting that $\dim(\mf{c})+\codim(\mf{c})=n$, gives
	\begin{equation}
		N_{g,n}^{[0]}(0,\ldots,0)
		= (-1)^n
		\sum_{\mf{c}}
			(-1)^{\dim(\mf{c})} \,N_{g,(0,\ldots,0)}^{\mf{c}}(0,\ldots,0).
	\end{equation}
	The right-hand side equals $(-1)^n \mathsf{F}_{g,(0,\ldots,0)}(\infty,\ldots,\infty)$, which completes the proof.
\end{proof}

\subsection{Via refined topological recursion: a proof of \cref{thm:EC}}
We conclude by observing that, by the definition of $S_{g,n}$ and \cref{eq:Web:lattice}, one has
\begin{equation}
	\int_{0}^{\infty} \cdots \int_{0}^{\infty}
	\omega^{\textup{Web}}_{g,n}\big|_{\mu=-1}
	=
	(-1)^{n} \frac{2}{(1+b)^{g}}\,
	S_{g,n}(\infty).
\end{equation}
Combining this identity with property \hyperref[prop:RTR4]{RTR4} of \cref{thm:RTR} and the lemmas above, yields \cref{thm:EC}, namely
\begin{equation}
	\chi_{g,n}(b)
	=
	N^{[0]}_{g,n}(0,\ldots,0;b)
	=
	(-1)^{n}\,
	\Gamma(2g-2+n)\,
	\frac{
		B_{2,2g}(0 \,|\, \beta^{1/2}, -\beta^{-1/2})
	}{2 \beta^g (2g)!},
\end{equation}
where $\beta=\frac{1}{1+b}$ relates the refinement parameters.

Finally, \cref{eq:iso:or,eq:iso:nor} imply
\begin{equation}
	\chi(\mc{M}_{g,n}) = 2\,\chi_{g,n}(b)\big|_{b=0},
	\qquad
	\chi(\mc{K}_{g,n})
	=
	2^{n} \Bigl( \chi_{g,n}(b)\big|_{b=1} - \chi_{g,n}(b)\big|_{b=0} \Bigr),
\end{equation}
completing the proof of \cref{thm:EC}.

\appendix

\section{Computing the measure of non-orientability}
\label{app:MON}

In this appendix, we give an example computation of the MON. Consider the graph $G$ of type $(1,1)$ drawn on a Klein bottle, shown in \cref{fig:KBgraph}.

\begin{figure}[b]
	\begin{subfigure}{.5\textwidth}
		\centering

		\caption{A metric M\"obius graph of type (1,1).}
		\label{fig:KBgraph}
	\end{subfigure}%
	\begin{subfigure}{.5\textwidth}
		\centering
		%
		\caption{A metric M\"obius graph of type (1,2).}
		\label{fig:soccerfield}
	\end{subfigure}	
	\caption{
		Two examples of metric M\"obius graphs.
	}
	\label{fig:MON:exmpl}
\end{figure}
In the top rows of \cref{fig:MON:algorithm}, we list all $12$ possible rootings of $G$, with the root marked in red. In the first four rootings the root lies on the edge of length~$\ell_1$, in the middle four it lies on the edge of length~$\ell_2$, and in the last four it lies on the edge of length~$\ell_3$.
\begin{figure}
	%
	\caption{MON computation for a graph of type $(1,1)$ drawn on a Klein bottle.}
	\label{fig:MON:algorithm}
\end{figure}
Removing the root from the first or the last four graphs leaves the number of faces unchanged ($\mc{R}$-type); we therefore assign the weight $w=b$. The resulting graph, depicted in the bottom rows, is a M\"obius strip, whose MON is equal to $b$. For the middle four graphs, we indicate in black arrows the orientation induced by the root. In these cases, removing the root increases the number of faces ($\mc{D}^{\rm c}$-type), and we depict the resulting graph in the bottom rows. We mark the new root in red and the orientation induced by this root in blue. Since the orientations of the edge succeeding the root of $G$ do not match, we assign the weight $w=b$. The graph obtained after root removal is orientable, and hence has MON equal to $1$.

Putting these contributions together, we obtain
\begin{equation}
	\rho_G(\ell)
	=
	\frac{1}{4 \ell_1 + 4 \ell_2 + 4 \ell_3} \left(4 \ell_1 b^2 + 4 \ell_2 b + 4 \ell_3 b^2 \right)
	= 
	\frac{1}{\ell_1 + \ell_2 + \ell_3} \left( (\ell_1 + \ell_3) b^2 + \ell_2 b \right).
\end{equation}
The interested reader can also check that the MON for the graph shown in \cref{fig:soccerfield} is
\begin{equation}
	\frac{b^2 \ell_1 - b \ell_1}{\ell_1 + \ell_2 + \ell_3 + \ell_4 + \ell_5 + \ell_6}
	+
	\frac{b \ell_1 - b^2 \ell_1}{\ell_1 + \ell_2 + \ell_3 + \ell_4 + \ell_6}
	+
	\frac{b \ell_1 + b^2 \ell_2 + b^2 \ell_3 + b^2 \ell_4 + b^2 \ell_5}{\ell_1 + \ell_2 + \ell_3 + \ell_4 + \ell_5}.
\end{equation}
In particular, the MON is not a polynomial function of the edge lengths, even if we restrict ourselves to fixed boundary lengths, as the above example shows.

\section{Base topologies}
\label{app:base:top}

In this appendix, we compute the refined lattice point counts for the base-case topologies $(0,3)$, $(\tfrac{1}{2},2)$, and $(1,1)$, corresponding respectively to a pair of pants, a two-holed cross-cap, and the combination of a one-holed torus and a one-holed Klein bottle. 

\subsection*{Pair of pants}
For fixed $(L_1,L_2,L_3) \in \Z_{>0}^{3}$, there exists a single integral metric M\"obius graph with $(L_1,L_2,L_3)$ as perimeters if $L_1+L_2+L_3$ is even, and none otherwise. Since the MON is constantly one and the order of the automorphism group is $2$, we find
\begin{equation}
	N_{0,3}(L_1,L_2,L_3;b) = \frac{1+(-1)^{L_1+L_2+L_3}}{2} \frac{1}{2}.
\end{equation}

\subsection*{Two-holed cross-cap}
Fix $(L_1,L_2) \in \Z_{>0}^{2}$. Again, there are no integral metric M\"obius graphs unless $L_1 + L_2$ is even. Notice that the MON is constantly $b$ in this case. We split the computation into three cases, referring to \cref{ex:chi1} for the contributing graphs.

First, suppose $L_1 = L_2 = L$. In this case, only the last graph in \cref{ex:chi1} contributes, giving $\frac{1}{4} \sum_{\ell_1 + \ell_2 = L} b = \frac{b}{4}(L-1)$.

Next, suppose $L_1 > L_2$. In this case, only the first, third, and fifth graphs contribute. In this order, they add to the refined lattice point count as
\begin{equation}
	\frac{1}{4} \sum_{\substack{\ell_1 + \ell_2 + 2\ell_3 = L_1 \\ \ell_1 + \ell_2 = L_2}} b
	=
	\frac{b}{4} (L_2 - 1),
	\quad
	\frac{1}{2} \sum_{\substack{2\ell_1 + 2\ell_2 + \ell_3 = L_1 \\ \ell_3 = L_2}} b
	=
	\frac{b}{2} \left(\frac{L_1 - L_2}{2} - 1\right),
	\quad
	\frac{1}{2}\sum_{\substack{2\ell_1 + \ell_2 = L_1 \\ \ell_2 = L_2}} b
	=
	\frac{b}{2},
\end{equation}
for a total of $\frac{b}{4}(L_1-1)$. The case $L_2 > L_1$ is symmetric, giving $\frac{b}{4}(L_2-1)$.

Thus, for generic $(L_1,L_2)$, we find
\begin{equation}
	N_{\frac{1}{2},2}(L_1,L_2;b)
	=
	\frac{1 + (-1)^{L_1+L_2}}{2} \,
	b \frac{\max(L_1,L_2) - 1}{4} .
\end{equation}

\subsection*{One-holed torus and Klein bottle}
Fix $L_1 \in \Z_{>0}$, which must be even to yield a non-trivial contribution. The first two graphs in \cref{ex:chi1} are drawn on a torus, giving
\begin{equation}
	N_{\textup{torus}}(L_1)
	=
	\frac{1}{12} \sum_{\ell_1 + \ell_2 + \ell_3 = \frac{L_1}{2}} 1
	+
	\frac{1}{8} \sum_{\ell_1 + \ell_2 = \frac{L_1}{2}} 1
	=
	\frac{L_1^2 -6L_1 + 8}{96}
	+
	\frac{L_1-2}{16}
	=
	\frac{L_1^2 - 4}{96} .
\end{equation}
As for the last four graphs in \cref{ex:chi1}, they are drawn on a Klein bottle (KB). Interestingly, the MON is not constant in this case (cf. \cref{app:MON}), yielding
\begin{equation}
\begin{split}
	N_{\textup{KB}}(L_1)
	&=
	\frac{1}{4}
	\Bigg(
		\sum_{\ell_1 + \ell_2 + \ell_3 = \frac{L_1}{2}} \frac{2(\ell_1 + \ell_2)b^2 + 2\ell_3 b}{L_1}
		+
		\sum_{\ell_1 + \ell_2 + \ell_3 = \frac{L_1}{2}} b^2
		+
		\sum_{\ell_1 + \ell_2 = \frac{L_1}{2}} \frac{2\ell_1 b^2 + 2\ell_2 b}{L_1}
		+
		\sum_{\ell_1 + \ell_2 = \frac{L_1}{2}} b^2
	\Bigg) \\
	&=
	\frac{b(L_1^2 - 4) + b^2(5L_1^2 -12L_1 + 4)}{96}.
\end{split}
\end{equation}
Altogether, taking into account the parity condition, we find
\begin{equation}
	N_{1,1}(L_1;b)
	=
	\frac{1 + (-1)^{L_1}}{2}
	\frac{
		(1+b)(L_1^2 - 4)
		+
		b^2 (5L_1^2 - 12L_1 + 4)
	}{96}.
\end{equation}

\section{Gaussian \texorpdfstring{$\beta$}{β}-ensemble}
\label{app:GbetaE}

This appendix recalls the Gaussian $\beta$-ensemble, its connected correlators, and their $1/N$ expansion. Fix $\beta>0$. The \emph{Gaussian $\beta$-ensemble} (G$\beta$E) is the probability measure $\mu_{N,\beta}$ on $\R^N$ given by
\begin{equation}\label{eq:GbetaE}
	d\mu_{N,\beta}
	\coloneqq
	\frac{1}{Z_{N,\beta}}
	\prod_{1\le r<s\le N}|\lambda_r-\lambda_s|^{2\beta}
	\prod_{r=1}^N e^{-N\beta\,\frac{\lambda_r^2}{2}}\, d\lambda_r \,,
\end{equation}
where $Z_{N,\beta}$ is a normalisation constant such that $\int_{\R^N} d\mu_{N,\beta} = 1$. With this choice of parameters, $\beta=1$ coincides with the eigenvalue distribution of the Gaussian unitary ensemble (Hermitian matrices), while $\beta=\tfrac12$ and $\beta=2$ correspond to the Gaussian orthogonal and symplectic ensembles, respectively.

For polynomial functions $f_1,\ldots,f_n$ on $\R^N$ (observables), the \emph{connected correlator} with respect to $\mu_{N,\beta}$ is
\begin{equation}\label{eq:cumulant}
	\Braket{f_1,\ldots,f_n}^{\mathrm{G}\beta\mathrm{E}}
	\coloneqq
	\sum_{\pi} (-1)^{|\pi|-1} (|\pi|-1)!\,
	\prod_{I \in \pi} \int_{\R^N} \biggl( \prod_{i\in I} f_i \biggr)\,d\mu_{N,\beta},
\end{equation}
where the sum is over set-partitions $\pi$ of $\{1,\ldots,n\}$. We are interested in the connected correlators of the \emph{power-sum observables} $p_k(\lambda)\coloneqq \sum_{r=1}^N \lambda_r^{k}$. Their $1/N$ expansion admits a topological (half-integer genus) expansion \cite{Oko97,BG13}:
\begin{equation}\label{eq:topological:expansion}
	\Braket{p_{k_1},\ldots,p_{k_n}}^{\mathrm{G}\beta\mathrm{E}}
	=
	\beta^{-\frac{n}{2}}
	\sum_{g \in \frac{1}{2}\Z_{\ge 0}} (\beta^{\frac{1}{2}}N)^{2-2g-n}\,
		\Braket{p_{k_1},\ldots,p_{k_n}}^{\mathrm{G}\beta\mathrm{E}}_g,
\end{equation}
and the genus-$g$ correlators are conveniently repackaged into the resolvent-type generating function
\begin{equation}\label{eq:GbetaE:resolvent}
	\omega_{g,n}^{\mathrm{G}\beta\mathrm{E}}(x_1,\ldots,x_n)
	\coloneqq
	\sum_{k_1,\ldots,k_n > 0}
	\Braket{p_{k_1},\ldots,p_{k_n}}^{\mathrm{G}\beta\mathrm{E}}_g
	\left( \prod_{i=1}^n \frac{dx_i}{x_i^{k_i+1}} \right).
\end{equation}
A result of \cite{CDO26} shows that refined topological recursion computes these G$\beta$E resolvent. Let $\omega^{\mathrm{Web}}_{g,n}$ be the refined topological recursion correlators on the Weber curve (cf. \cref{def:Weber:Airy}) with parameter $\mu=-1$.

\begin{theorem}[{G$\beta$E and refined topological recursion \cite[Theorem A.3]{CDO26}}]\label{thm:GbetaE-RTR}
	After the substitution $x_i=x(z_i)$, the genus-$g$ G$\beta$E resolvent differential $\omega^{\mathrm{G}\beta\mathrm{E}}_{g,n}(x(z_1),\ldots,x(z_n))$ extends to a meromorphic multidifferential on $(\P^1)^n$ and is computed by refined topological recursion:
	\begin{equation}\label{eq:GbetaE-RTR}
		\omega^{\mathrm{G}\beta\mathrm{E}}_{g,n}\bigl(x(z_1),\ldots,x(z_n)\bigr)
		=
		\omega^{\mathrm{Web}}_{g,n}(z_1,\ldots,z_n),
	\end{equation}
	under the identification of refinement parameters $\mf{b}=\beta^{1/2}-\beta^{-1/2}$\footnote{The parameter $\beta$ in \cite[Appendix A]{CDO26} is twice the $\beta$ used here.}.
\end{theorem}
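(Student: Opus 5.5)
The plan is to characterise both sides of \cref{eq:GbetaE-RTR} as the unique solution of one system of refined loop equations with prescribed analytic behaviour. For the left-hand side, I would derive the Schwinger--Dyson equations of \cref{eq:GbetaE} by integration by parts. The insertion is $\sum_r \partial_{\lambda_r}\bigl(\frac{1}{x-\lambda_r}\prod_{i\ge 2}\sum_s \frac{1}{x_i-\lambda_s}\bigr)$ against $d\mu_{N,\beta}$. The Vandermonde factor produces $\beta$ times the square of the resolvent. The diagonal terms $r=s$ produce a term $(1-\beta)\,\partial_x$ acting on the resolvent. The Gaussian weight produces $N\beta\,x$ times the resolvent, up to a polynomial correction. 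Passing to connected correlators and inserting the topological expansion \cref{eq:topological:expansion}, I would then collect powers of $\beta^{1/2}N$. After multiplying by the appropriate power of $\beta^{-1/2}$, the derivative term carries exactly $\mf{b}=\beta^{1/2}-\beta^{-1/2}$. This makes the half-integer genus shifts appear, and the equations take precisely the form of $\Rec_{g,n}$: the terms $\omega_{g-1,n+1}(z,z,\ldots)$, the product terms, and $\mf{b}\,dx\,d(\omega_{g-\frac12,n}/dx)$.

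Next, I would solve the unstable part explicitly. The $(0,1)$ equation gives the semicircle, $y^2=x^2-4$, which is uniformised by $x=z+\frac1z$; this gives the Weber $\omega_{0,1}$. The $(0,2)$ equation gives the standard Gaussian two-point function, which pulls back to $-B(z_1,\sigma(z_2))$ because the expansion is around $x=\infty$, i.e.\ around $z=0$. The $(\frac12,1)$ equation is linear in $\omega_{\frac12,1}$ with source $\mf{b}\,d\omega_{0,1}$-type terms. Solving it and comparing with \cref{def:Weber:Airy} should fix $\mu=-1$. I would double-check this value against $\omega^{\mathrm{Web}}_{\frac12,2}$ and $\omega^{\mathrm{Web}}_{1,1}$ in \cref{eq:omega:Weber:base}, by computing $\langle p_1,p_1\rangle_{1/2}$ and $\langle p_2\rangle_1$ directly from the Gaussian $\beta$-ensemble.

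For the stable range, I would invoke the uniqueness statement underlying \cite{KO23}. Refined topological recursion is equivalent to the refined loop equations together with two conditions on each $\omega_{g,n}$ in $z_1$: its only poles lie at $\mc{R}\cup\{\sigma(z_j)\}$, and it is odd under $\sigma$ up to the $\omega_{0,2}$ correction. Symmetry in the variables is automatic for correlators. The residue-free and antisymmetry conditions follow by induction from the loop equations, because the "quadratic" combination is a polynomial in $x$.

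The main obstacle is the pole condition. A priori, the genus-$g$ resolvent is only known as a formal series in $1/x$. I would prove by induction on $2g-2+n$ that, once written on the $z$-sphere, it is rational with poles only at $z=\pm1$ and $z_i=\sigma(z_j)$. The idea is to solve the loop equation for $\omega_{g,n}(z_1,\ldots)$ by dividing by $2\omega_{0,1}$, so that new poles can only come from the zeros of $y$, namely the branch points, and from the $\omega_{0,2}$ terms. The delicate point is the $\mf{b}$-derivative term, which might introduce poles of $1/dx$ at the ramification points whose orders must be controlled. As a fallback, I would instead use the combinatorial route: expand the correlators as $b$-weighted counts of Möbius maps \cite{LaC09,Dol17,CD22}, prune them to relate them to $N_{g,n}$, and then conclude via point~(\labelcref{Weber}) of \cref{prop:RTR}.
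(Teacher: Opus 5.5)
The paper does not prove this statement; it imports it from \cite[Theorem~A.3]{CDO26}. Your outline therefore has to stand on its own, and it has a genuine gap.

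Your derivation of the loop equations and of the unstable data is sound. The $(\tfrac12,1)$ equation gives $W_{\frac12}=\mf{b}\,\partial_x W_0/y$ with $W_0=z$ and $y=z^{-1}-z$. Hence $\omega_{\frac12,1}=\mf{b}\,\frac{z\,dz}{1-z^2}$, which is exactly the Weber datum at $\mu=-1$. Note only that $\omega^{\textup{Web}}_{0,1}=-\tfrac12\,y\,dx$, not $W_0\,dx$.

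The gap is the uniqueness step. The characterisation you invoke consists of loop equations, poles in $\mc{R}\cup\{\sigma(z_j)\}$, and oddness under $\sigma$ for stable $(g,n)$. Refined correlators do not satisfy the oddness condition. So no uniqueness theorem of that form applies, and your claim that antisymmetry "follows by induction from the loop equations" is false. Already from \cref{eq:omega:Weber:base} at $\mu=-1$ one finds
\begin{equation*}
	\omega^{\textup{Web}}_{1,1}(z)+\sigma^*\omega^{\textup{Web}}_{1,1}(z)
	=
	\mf{b}^2\, d\Bigl(\frac{z^2}{(1-z^2)^2}\Bigr)\neq 0 .
\end{equation*}
The same non-zero answer comes out of the G$\beta$E loop equation itself. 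The reason is that $\omega_{\frac12,1}+\sigma^*\omega_{\frac12,1}=-\mf{b}\,dy/y$, and the $\mc{E}$-type source $W_{\frac12}^2+\mf{b}\,\partial_x W_{\frac12}$ carries this defect into every higher order. In the refined theory, $\omega_{g,n}+\sigma^*\omega_{g,n}$ is generically a non-zero, $\mf{b}$-dependent quantity, not merely an $\omega_{0,2}$-type correction.

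What is needed instead is the following. Because $V'(x)=x$ is linear, the Gaussian loop equations contain no undetermined polynomial. They solve as $y(x_1)\,W_{g,n}(x_1,\ldots)=[\text{explicit lower-order terms}]$, and this alone determines the G$\beta$E side. It also gives rationality on $(\P^1)^n$ by an easy induction, since $1/y$, $\partial_x=\frac{z^2}{z^2-1}\partial_z$ and $(x_1-x_j)^{-1}$ are all rational in the $z$'s. So the step you single out as the main obstacle is not where the difficulty lies.

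The substantive content is to show that the refined residue formula satisfies these same closed loop equations. Part of this is easy: since $y\,dx=-2\omega^{\textup{Web}}_{0,1}$, one has $2\Res_{z=z_1}K(z_1,z)\Rec(z)=\Rec(z_1)/(y\,dx)(z_1)$, which reproduces the division by $y$. You would still have to show that the remaining residues assemble into the $-\partial_{x_j}\bigl(W_{g,n-1}(x_j,\ldots)/(x_1-x_j)\bigr)$ terms. Alternatively, you would have to upgrade the refined loop equations of \cite{KO23} to exact identities in $x$ by a global argument on the Weber curve. Your proposal supplies neither.

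Your fallback does not close the gap either. Identifying pruned $b$-weighted map counts with the root-averaged MON count $N_{g,n}$ is essentially \cref{prop:Ngn-pruned}. The paper deduces that proposition from this very theorem, so the fallback would require an independent proof of it.
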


We now introduce a family of observables adapted to pruning \cite{GMM}. Let $T_k$ be the Chebyshev polynomials of the first kind, and define the (monic) \emph{Chebyshev observables}
\begin{equation}\label{eq:Cheb-observable}
	t_k(\lambda) \coloneqq 2 \sum_{r=1}^N T_k\left(\frac{\lambda_r}{2}\right),
	\qquad
	k > 0.
\end{equation}
Using the explicit expansion $2\,T_k(\frac{\lambda}{2})=\sum_{m=0}^{\lfloor k/2\rfloor}(-1)^m\frac{k}{k-m}\binom{k-m}{m}\,\lambda^{k-2m}$, we obtain the change of basis
\begin{equation}\label{eq:tk-pk}
	t_k
	=
	\sum_{m=0}^{\lfloor k/2\rfloor}
	(-1)^m\frac{k}{k-m}\binom{k-m}{m}\,p_{k-2m}.
\end{equation}
Since \cref{eq:tk-pk} is triangular in $k$, it can be inverted uniquely. We therefore define the \emph{pruned connected correlators} to be the connected correlators of the Chebyshev observables, namely $\Braket{t_{k_1},\ldots,t_{k_n}}^{\mathrm{G}\beta\mathrm{E}}_g$. With this convention, the relation between usual and pruned correlators reads
\begin{equation}\label{eq:unpruning}
	\Braket{t_{k_1},\ldots,t_{k_n}}^{\mathrm{G}\beta\mathrm{E}}_g
	=
	\sum_{\substack{m_1,\ldots,m_n\ge 0\\ k_i-2m_i>0}}
	\left(\prod_{i=1}^n (-1)^{m_i}\,\frac{k_i}{k_i-m_i}\binom{k_i-m_i}{m_i}\right)
	\Braket{p_{k_1-2m_1},\ldots,p_{k_n-2m_n}}^{\mathrm{G}\beta\mathrm{E}}_g ,
\end{equation}
and, by inversion,
\begin{equation}\label{eq:pruning}
	\Braket{p_{k_1},\ldots,p_{k_n}}^{\mathrm{G}\beta\mathrm{E}}_g
	=
	\sum_{\substack{m_1,\ldots,m_n\ge 0 \\ k_i-2m_i>0}}
	\Braket{t_{k_1-2m_1},\ldots,t_{k_n-2m_n}}^{\mathrm{G}\beta\mathrm{E}}_g
	\prod_{i=1}^n \binom{k_i}{m_i} .
\end{equation}
The interpretation of \cref{eq:pruning} is the usual one in terms of Feynman diagrams: a one-valent vertex (a \emph{petal}) attached to the $i$th boundary component contributes $2$ to its boundary degree, and the factor $\binom{k_i}{m_i}$ counts the choice of $m_i$ attachment sites among the $k_i$ boundary corners, independently for each $i$. With this definition in place, we can state the relation between the refined lattice point count and the pruned G$\beta$E correlators.

\begin{proposition}\label{prop:Ngn-pruned}
	Under the identification of refinement parameters $\beta = \frac{1}{1+b}$, the refined lattice point counts $N_{g,n}$ coincide with the pruned genus-$g$ G$\beta$E correlators, up to an overall combinatorial factor:
	\begin{equation}\label{eq:Ngn-pruned}
		N_{g,n}(L_1,\ldots,L_n)
		=
		\frac{1}{2\beta^g} \,
		\Braket{\frac{t_{L_1}}{L_1},\ldots,\frac{t_{L_n}}{L_n}}^{\mathrm{G}\beta\mathrm{E}}_g .
	\end{equation}
\end{proposition}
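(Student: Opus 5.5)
The plan is to compare generating functions on both sides, using \cref{thm:GbetaE-RTR} and point~(\labelcref{Weber}) of \cref{prop:RTR}. Both identify the same Weber correlators $\omega^{\textup{Web}}_{g,n}$ (at $\mu=-1$, with $\mf{b}=\beta^{1/2}-\beta^{-1/2}=-\frac{b}{\sqrt{1+b}}$ when $\beta=\frac{1}{1+b}$). The first expresses them via the G$\beta$E resolvent in the variables $x_i=x(z_i)=z_i+z_i^{-1}$; the second expresses them as a power series in $z_i$ with coefficients $N_{g,n}$. So I only need to show that re-expanding the resolvent \labelcref{eq:GbetaE:resolvent} in powers of $z_i$ near $z_i=0$ (which corresponds to $x_i=\infty$) produces exactly the Chebyshev, i.e.\ pruned, correlators.

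The key computation is one variable at a time. Write $\lambda=2\cos\theta$, so that $2T_k(\lambda/2)=e^{ik\theta}+e^{-ik\theta}$. Near $x=\infty$ with $|z|<1$, the standard generating function of Chebyshev polynomials gives
\[
\frac{dx}{x-\lambda}=\frac{(1-z^{-2})\,dz}{(1-z e^{i\theta})(1-z e^{-i\theta})}\cdot z^{-1}\cdot z^{2}\cdots .
\]
It is cleaner to state the identity directly:
\[
\sum_{k>0}\frac{p_k\,dx}{x^{k+1}}=-\,d\log\prod_r\bigl(1-\lambda_r z+z^2\bigr)+(\text{$\lambda$-independent})
=-\sum_{k>0} t_k\, z^{k-1}dz+\cdots .
\]
This uses $-\log(1-\lambda z+z^2)=\sum_{k\ge1}\frac{2T_k(\lambda/2)}{k}z^k$. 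Applying this in each variable and using multilinearity of connected correlators, for $2g-2+n>0$ the $\lambda$-independent terms drop out, because connected correlators with a constant observable vanish. This gives
\[
\omega^{\mathrm{G}\beta\mathrm{E}}_{g,n}(x(z_1),\ldots,x(z_n))=(-1)^n\sum_{L_i>0}\Braket{t_{L_1},\ldots,t_{L_n}}_g^{\mathrm{G}\beta\mathrm{E}}\prod_i z_i^{L_i-1}dz_i .
\]
Alternatively, I can check this purely algebraically, by substituting $x^{-k-1}dx$ in terms of $z$ and comparing with the inverse change of basis \labelcref{eq:pruning}. The binomial $\binom{k}{m}$ in the expansion of $(z+z^{-1})^{-k-1}(1-z^{-2})$ is exactly what appears there, so the two routes should agree.

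Comparing with \labelcref{eq:Web:lattice}, which reads $(-1)^n\frac{2}{(1+b)^g}\sum N_{g,n}\prod L_i z_i^{L_i-1}dz_i$, coefficient extraction gives $\Braket{t_{L_1},\ldots,t_{L_n}}_g=\frac{2}{(1+b)^g}\prod_i L_i\, N_{g,n}(L)$. Since $(1+b)^{-g}=\beta^g$, this rearranges to the claimed \labelcref{eq:Ngn-pruned}, after dividing each $t_{L_i}$ by $L_i$.

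I expect the main obstacle to be bookkeeping of conventions rather than anything conceptual. Three points need care: (i) the branch of $z$ as a function of $x$ near $\infty$ (we need $z\to0$, i.e.\ $z\sim1/x$), together with the sign from $dx=(1-z^{-2})dz$, which produces the $(-1)^n$; (ii) matching the normalisations of $\beta$ and $\mf{b}$ between \cref{thm:GbetaE-RTR} (see its footnote) and \cref{prop:RTR}; and (iii) convergence, or more precisely the formal-series justification that the $1/N$-genus expansion commutes with the change of basis. This last point is harmless because \labelcref{eq:tk-pk} is triangular and finite for each $k$.
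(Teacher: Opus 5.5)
Your proposal is correct and follows essentially the paper's route. You identify both sides with $\omega^{\mathrm{Web}}_{g,n}$ at $\mu=-1$ via the G$\beta$E--refined topological recursion theorem and the Weber part of the Laplace-transform proposition, re-expand at $z_i=0$, and extract coefficients. Your Chebyshev generating function $-\log(1-\lambda z+z^2)=\sum_{k\ge1}\frac{2T_k(\lambda/2)}{k}z^k$ is a compact packaging of the paper's explicit expansion of $dx/x^{k+1}$ combined with the unpruning relation.

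There are two small slips to fix.
\begin{itemize}
\item Since $\frac{dx}{x-\lambda}=-\frac{dz}{z}+d\log(1-\lambda z+z^2)$, the middle expression should be $+d\log\prod_r(1-\lambda_r z+z^2)$. Your final $-\sum_k t_k z^{k-1}dz$ is right.
\item For $n=1$, a constant observable does not have vanishing expectation. The argument still goes through for a different reason: the $\lambda$-independent part is $N$ times a differential in $z$, so it only enters $\langle\cdot\rangle_0$. This is harmless because $n=1$ forces $g\ge1$.
\end{itemize}
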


\begin{proof}
	By \cref{thm:GbetaE-RTR} we have, after the substitution $x_i=x(z_i)$, the identity
	$\omega^{\mathrm{G}\beta\mathrm{E}}_{g,n}(x(z_1),\ldots,x(z_n))=\omega^{\mathrm{Web}}_{g,n}(z_1,\ldots,z_n)$.
	Expanding both sides at $z_i=0$ and using the defining expansions
	\begin{equation}
		\omega^{\mathrm{Web}}_{g,n}(z_1,\ldots,z_n)
		=
		(-1)^{n} \frac{2}{(1+b)^g}
		\sum_{L_1,\ldots,L_n>0}
		N_{g,n}(L_1,\ldots,L_n)\prod_{i=1}^n L_i\,z_i^{L_i-1}\,dz_i
	\end{equation}
	and, using the expansion
	\begin{equation}\label{eq:dx-over-x-expansion}
		\frac{dx(z)}{x(z)^{k+1}}
		=
		- \sum_{\substack{L \ge k \\ L\equiv k\;(\mathrm{mod}\,2)}}
			(-1)^{\frac{L-k}{2}}\, \frac{2L}{L+k}\, \binom{\frac{L+k}{2}}{\frac{L-k}{2}} \,z^{L-1}\,dz,
	\end{equation}
	together with \cref{eq:unpruning}, we find
	\begin{equation}
		\omega^{\mathrm{G}\beta\mathrm{E}}_{g,n}\bigl(x(z_1),\ldots,x(z_n)\bigr)
		=
		(-1)^n
		\sum_{L_1,\ldots,L_n>0}
		\Braket{\frac{t_{L_1}}{L_1},\ldots,\frac{t_{L_n}}{L_n}}^{\mathrm{G}\beta\mathrm{E}}_g
		\prod_{i=1}^n L_i\,z_i^{L_i-1}\,dz_i.
	\end{equation}
	Extracting the coefficients of $\prod_i L_i z_i^{L_i-1}dz_i$ gives \cref{eq:Ngn-pruned} under the identification $\beta = \frac{1}{1+b}$.
\end{proof}

\printbibliography

\end{document}